\newcommand{\N}{\mathbb N}
\newcommand{\Q}{\mathbb Q}
\newcommand{\cantor}{\{0,1\}^\N}
\newcommand{\words}{\{0,1\}^*}
\newcommand{\segment}{\!\upharpoonright \!}
\theoremstyle{plain}
\newtheorem{theorem}{Theorem}
\newtheorem{proposition}[theorem]{Proposition}
\newtheorem{corollary}[theorem]{Corollary}
\newtheorem{definition}[theorem]{Definition}
\newtheorem*{theorem*}{Theorem}
\newtheorem*{corollary*}{Corollary}
\newtheorem*{lemma*}{Lemma}
\newtheorem*{claim*}{Claim}
\newtheorem{example}[theorem]{Example}
\theoremstyle{definition}
\theoremstyle{remark}
\newtheorem{remark}[theorem]{Remark}
\newtheorem*{note*}{Note}
\newtheorem*{proposition*}{Proposition}
\newtheorem*{definition*}{Definition}
\newtheorem*{convention*}{Convention}
\newtheorem*{notation*}{Notation}
\newtheorem*{remark*}{Remark}
\newtheorem*{example*}{Example}
\title{Kolmogorov complexity in perspective}
\author{Marie Ferbus-Zanda\\
{\footnotesize LIAFA, CNRS \& Universit\'e Paris 7}\\
{\footnotesize 2, pl. Jussieu}\\
{\footnotesize 75251 Paris Cedex 05 France}\\
{\footnotesize ferbus@logique.jussieu.fr}
\and Serge Grigorieff\\
{\footnotesize LIAFA, CNRS \& Universit\'e Paris 7}\\
{\footnotesize 2, pl. Jussieu}\\
{\footnotesize 75251 Paris Cedex 05 France}\\
{\footnotesize seg@liafa.jussieu.fr}}
\begin{document}
\maketitle
{\footnotesize\tableofcontents}
\normalsize
%
\begin{abstract}
We survey the diverse approaches to the notion of information
content: from Shannon entropy to Kolmogorov complexity.
The main applications of Kolmogorov complexity are presented:
namely, the mathematical notion of randomness
(which goes back to the 60's with the work of Martin-L\"of,
Schnorr, Chaitin, Levin),
and classification, which is a recent idea with provocative
implementation by Vitanyi and Cilibrasi.
.
\end{abstract}
%
\begin{note*}
Following Robert Soare's recommendations in \cite{soareBSL},
which have now gained large agreement, we shall write
{\em computable} and {\em computably enumerable}
in place of the old fashioned
{\em recursive} and {\em recursively enumerable}.
\end{note*}
\begin{notation*}
By $\log(x)$ we mean the logarithm of $x$ in base $2$.
By $\lfloor x\rfloor$ we mean the ``floor" of $x$,
i.e. the largest integer $\leq x$.
Similarly, $\lceil x\rceil$ denotes the ``ceil" of $x$,
i.e. the smallest integer $\geq x$.
Recall that the length of the binary representation of
a non negative integer $n$ is $1+\lfloor\log n\rfloor$.
\end{notation*}
%
\section{Three approaches to the quantitative definition
of information}
%
A title borrowed from Kolmogorov's seminal paper,
1965 \cite{kolmo65}.
%
%
\subsection{Which information ?}
%
\subsubsection{About anything...}
About anything can be seen as conveying information.
As usual in mathematical modelization,
we retain only a few features of some real entity or process,
and associate to them some finite or infinite mathematical objects.
For instance,
\begin{itemize}
\item
- an integer or a rational number or a word in some alphabet,
\\- a finite sequence or a finite set of such objects,
\\- a finite graph,...
\item
- a real,
\\- a finite or infinite sequence of reals or a set of reals,
\\- a function over words or numbers,...
\end{itemize}
This is very much as with probability spaces. For instance,
to modelize the distributions of $6$ balls into $3$ cells,
(cf. Feller's book \cite{feller} \S I2, II5)
we forget everything about the nature of balls and cells
and of the distribution process,
retaining only two questions: ``how many are they?" and
``are they distinguishable or not?".
Accordingly, the modelization considers
\\
- either the $729=3^6$ maps from the set of balls
into the set of cells
in case the balls are distinguishable and so are the cells
(this is what is done in Maxwell-Boltzman statistics),
\\
- or the $28=\binom{3+6-1}{6}$ triples of non negative integers with sum $6$
in case the cells are distinguishable but not the balls
(this is what is done in in Bose-Einstein statistics)
\\
- or the $7$ sets of at most $3$ integers with sum $6$
in case the balls are undistinguishable and so are the cells.
%
%
\subsubsection{Especially words}\label{sss:words}
In information theory, special emphasis is made on information
conveyed by words on finite alphabets.
I.e. on {\em sequential information} as opposed to the obviously
massively parallel and interactive distribution of information
in real entities and processes.
A drastic reduction which allows for mathematical developments
(but also illustrates the Italian sentence
``traduttore, traditore!").
\medskip\\
As is largely popularized by computer science, any finite alphabet
with more than two letters can be reduced to one with exactly
two letters.
For instance, as exemplified by the ASCII code
(American Standard Code for Information Interchange),
any symbol used in written English
-- namely the lowercase and uppercase letters, the decimal digits,
the diverse punctuation marks, the space, apostrophe, quote,
left and right parentheses --
can be coded by length $7$ binary words
(corresponding to the $128$ ASCII codes).
Which leads to a simple way to code any English text by
a binary word (which is $7$ times longer).\footnote{For
other European languages which have a lot of
diacritic marks, one has to consider the $256$ codes of the
Extended ASCII code.}
\medskip\\
Though quite rough, the length of a word is the basic measure
of its information content.
Now a fairness issue faces us:
richer the alphabet, shorter the word.
Considering groups of $k$ successive letters as new letters
of a super-alphabet, one trivially divides the length by $k$.
For instance, a length $n$ binary word becomes a length
$\lceil \frac{n}{256}\rceil$ word with the usual packing of bits
by groups of $8$ (called bytes) which is done in computers.
\\
This is why length considerations will always be developed
relative to binary alphabets.
A choice to be considered as a {\em normalization of length}.
\medskip\\
Finally, we come to the basic idea to measure the information
content of a mathematical object $x$ :
\medskip\\\medskip\centerline{\em
\begin{tabular}{|rcl|}
\hline
information content of $x$
&=&
\begin{tabular}{l}
length of a shortest binary word\\
which ``encodes" $x$
\end{tabular}
\\\hline
\end{tabular}}
What do we mean precisely by ``encodes" is the crucial question.
Following the trichotomy pointed by Kolmogorov \cite{kolmo65},
we survey three approaches.
%
%
\subsection{Combinatorial approach}\label{s:combinatorial}
%
\subsubsection{Constant-length codes}\label{sss:constantlength}
Let's consider the family $A^n$ of length $n$ words in an alphabet
$A$ with $s$ letters $a_1,...,a_s$.
Coding the $a_i$'s by binary words $w_i$'s all of length
$\lceil\log s\rceil$, to any word $u$ in $A^n$ we can associate
the binary word $\xi$ obtained by substituting the $w_i$'s to the
occurrences of the $a_i$'s in $u$.
Clearly, $\xi$ has length $n\lceil\log s\rceil$.
Also, the map $u\mapsto\xi$ is very simple. Mathematically,
it can be considered as a morphism from words in alphabet $A$
to binary words relative to the algebraic structure (of monoid)
given by the concatenation product of words.
\medskip\\
Observing that $n\log s$ can be smaller than $n \lceil\log s\rceil$,
a modest improvement is possible which saves about
$n\lceil \log s\rceil - n\log s$ bits.
The improved map $u\mapsto\xi$ is essentially a change of base:
looking at $u$ as the base $s$ representation of an integer
$k$, the word $\xi$ is the base $2$ representation of $k$.
Now, the map $u\mapsto\xi$ is no more a morphism.
However, it is still quite simple and can be computed
by a finite automaton.
{\small\begin{quote}
We have to consider $k$-adic representations rather than the usual
$k$-ary ones.
The difference is simple: instead of using digits $0,1,...,k-1$
use digits $1,...,k$. The interpretation as a sum of successive
exponentials of $k$ is unchanged and so are all usual algorithms
for arithmetical operations.
Also, the lexicographic ordering on $k$-adic representations
corresponds to the natural order on integers.
For instance, the successive integers $0,1,2,3,4,5,6,7$,
written $0$, $1$, $10$, $11$, $100$, $101$, $110$, $111$ in binary
(i.e. $2$-ary) have $2$-adic representations the empty word (for $0$)
and then the words $1$, $2$, $11$, $12$, $21$, $22$, $111$.
Whereas the length of the $k$-ary representation of $x$ is
$1+\lfloor\frac{\log x}{\log k}\rfloor$, its $k$-adic representation
has length $\lfloor\frac{\log (x+1)}{\log k}\rfloor$.
\\
Let's interpret the length $n$ word $u$ as the $s$-adic
representation of an integer $x$ between $t=s^{n-1}+...+s^2+s+1$
and $t'=s^n+...+s^2+s$
(which correspond to the length $n$ words $11...1$ and $ss...s$).
Let $\xi$ be the $2$-adic representation of this integer $x$.
The length of $\xi$ is
$\leq \lfloor\log (t'+1)\rfloor
   =  \lfloor \log(\frac{s^{n+1}-1}{s-1})\rfloor
 \leq  \lfloor (n+1)\log s - \log(s-1)\rfloor
 =  \lfloor n\log s - \log(1-\frac{1}{s})\rfloor$
which differs from $n\log s$ by at most $1$.
\end{quote}}
%
\subsubsection{Variable-length prefix codes}\label{sss:varlength}
Instead of coding the $s$ letters of $A$ by binary words of length
$\lceil\log s\rceil$, one can code the $a_i$'s by binary words $w_i$'s
having different lengthes so as to associate
short codes to most frequent letters and long codes to rare ones.
Which is the basic idea of compression.
Using these codes, the substitution of the $w_i$'s to the
occurrences of the $a_i$'s in a word $u$ gives a binary word $\xi$.
And the map $u\mapsto\xi$ is again very simple.
It is still a morphism from the monoid of words on alphabet $A$
to the monoid of binary words
and can also be computed by a finite automaton.
\\
Now, we face a problem: can we recover $u$ from $\xi$ ?
i.e. is the map $u\mapsto \xi$ injective?
In general the answer is no.
However, a simple sufficient condition to ensure decoding
is that the family $w_1,...,w_s$ be a so-called
{\em prefix-free code}.
Which means that if $i\neq j$ then $w_i$ is not a prefix of $w_j$.
{\small\begin{quote}
This condition insures that there is a unique $w_{i_1}$
which is a prefix of $\xi$. 
Then, considering the associated suffix $\xi_1$ of $v$
(i.e. $v=w_{i_1}\xi_1$) there is a unique $w_{i_2}$
which is a prefix of $\xi_1$, i.e. $u$ is of the form
$u=w_{i_1}w_{i_2}\xi_2$.
And so on.
\end{quote}}
\noindent
Suppose the numbers of occurrences in $u$ of the letters
$a_1,...,a_s$ are $m_1,...,m_s$,
so that the length of $u$ is $n=m_1+...+m_s$.
Using a prefix-free code $w_1,...,w_s$, the binary word $\xi$
associated to $u$ has length $m_1|w_1|+...+m_s|w_s|$.
A natural question is, given $m_1,...,m_s$,
{\em how to choose the prefix-free code $w_1,...,w_s$
so as to minimize the length of $\xi$ ?}
\\
David A. Huffman, 1952 \cite{huffman}, found a very efficient
algorithm (which has linear time complexity if the frequencies
are already ordered).
This algorithm (suitably modified to keep its top efficiency
for words containing long runs of the same data)
is nowadays used in nearly every application
that involves the compression and transmission of data:
fax machines, modems, networks,...
%
\subsubsection{Entropy of a of distribution of frequencies}
\label{sss:entropy}
The intuition of the notion of entropy in information theory
is as follows.
Given natural integers $m_1,...,m_s$, consider the family
$\+F_{m_1,...,m_s}$ of length $n=m_1+...+m_s$ words
of the alphabet $A$ in which there are exactly $m_1,...,m_s$
occurrences of letters $a_1,...,a_s$.
How many binary digits are there in the binary representation
of the number of words in $\+F_{m_1,...,m_s}$ ?
It happens (cf. Proposition \ref{p:H})
that this number is essentially linear in
$n$, the coefficient of $n$ depending solely on the frequencies
$\frac{m_1}{n},...,\frac{m_s}{n}$.
It is this coefficient which is called the entropy $H$ of the
distribution of the frequencies $\frac{m_1}{n},...,\frac{m_s}{n}$.
\\
Now, $H$ has a striking significance in terms of information
content and compression.
Any word $u$ in $\+F_{m_1,...,m_s}$ is uniquely
characterized by its rank in this family
(say relatively to the lexicographic ordering on words
in alphabet $A$).
In particular, the binary representation of this rank ``encodes"
$u$ and its length, which is bounded by $nH$
(up to an $O(\log n)$ term) can be seen as an upper bound
of the information content of $u$.
Otherwise said, the $n$ letters of $u$ are encoded by $nH$
binary digits.
In terms of compression
(nowadays so popularized by the zip-like softwares),
{\em $u$ can be compressed to $nH$ bits}
i.e.
{\em the mean information content
(which can be seen as the compression size in bits)
of a letter of $u$ is $H$}.
\begin{definition}[Shannon, 1948 \cite{shannon48}] 
Let $f_1,...,f_s$ be a distribution of frequencies,
i.e. a sequence of reals in $[0,1]$ such that $f_1+...+f_s=1$.
The entropy of $f_1,...,f_s$ is the real
$$
H = -(f_1\log(f_1) +...+f_s\log(f_s))
$$
\end{definition}
Let's look at two extreme cases.
\\
If all frequencies are equal to $\frac{1}{s}$
then the entropy is $\log(s)$, so that the mean information content
of a letter of $u$ is $\log(s)$,
i.e. there is no better  (prefix-free) coding than that described in
\S\ref{sss:constantlength}.
\\
In case one frequency is $1$ and the other ones are $0$,
the information content of $u$ is reduced to its length $n$,
which, written in binary, requires $\log(n)$ bits.
As for the entropy, it is $0$
(with the usual convention $0\log 0=0$, justified by the fact that
$\lim_{x\to0}x\log x=0$).
The discrepancy between $nH=0$ and the true information content
$\log n$ comes from the $O(\log n)$ term (cf. the next Proposition).
\begin{proposition}\label{p:H}
Let $m_1,...,m_s$ be natural integers and $n=m_1+...+m_s$.
Then, letting $H$ be the entropy of the distribution of frequencies
$\frac{m_1}{n},...,\frac{m_s}{n}$,
the number $\sharp\+F_{m_1,...,m_s}$ of words in
$\+F_{m_1,...,m_s}$ satisfies
$$
\log(\sharp\+F_{m_1,...,m_s}) = nH + O(\log n)
$$
where the bound in $O(\log n)$ depends solely on $s$
and not on $m_1,...,m_s$.
\end{proposition}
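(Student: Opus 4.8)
The plan is to observe that $\sharp\+F_{m_1,\ldots,m_s}$ is exactly the multinomial coefficient $\binom{n}{m_1,\ldots,m_s}=\frac{n!}{m_1!\cdots m_s!}$: a word in $\+F_{m_1,\ldots,m_s}$ is determined by the choice, among the $n$ positions, of which $m_1$ carry $a_1$, which $m_2$ carry $a_2$, and so on. Taking base-$2$ logarithms then reduces the whole statement to estimating $\log(n!)-\sum_{i=1}^s\log(m_i!)$, and the natural tool for this is Stirling's formula, which I would recall in the form valid for every integer $k\geq 1$,
$$\log(k!)=k\log k-k\log e+O(\log k),$$
the constant hidden in the $O$ being absolute.

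Before invoking Stirling I would discard the letters $a_i$ with $m_i=0$: such letters occur in no word of the family, contribute $\log(0!)=0$ to the sum of logarithms, and contribute $-\frac{m_i}{n}\log\frac{m_i}{n}=0$ to $nH$ under the convention $0\log 0=0$, so removing them changes neither side of the claimed identity. For the at most $s$ remaining indices, each with $1\leq m_i\leq n$, I would apply the Stirling estimate to $n!$ and to every $m_i!$. Summing, the linear-in-$k$ contributions cancel, since $\sum_i m_i\log e=n\log e$ exactly offsets the term $-n\log e$ coming from $n!$. This leaves
$$\log\binom{n}{m_1,\ldots,m_s}=n\log n-\sum_{i}m_i\log m_i+O(\log n),$$
and it then suffices to rewrite the leading term: using $n=\sum_i m_i$ one gets $n\log n-\sum_i m_i\log m_i=\sum_i m_i(\log n-\log m_i)=-\sum_i m_i\log\frac{m_i}{n}=nH$, which is precisely the desired main term.

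The only delicate point, and what I expect to be the main obstacle, is the bookkeeping of the error term and the claim that its implicit constant depends solely on $s$. Each application of Stirling carries an error $O(\log m_i)\subseteq O(\log n)$ because $m_i\leq n$, and there are at most $s$ of these together with the single contribution from $n!$; hence the accumulated error is $O(s\log n)$, which is $O(\log n)$ with a constant governed only by $s$, exactly as required. One should verify that folding in the $m_i=0$ terms and the $0\log 0=0$ convention introduces no hidden dependence on the $m_i$ themselves; this is clear, since those terms vanish identically on both sides \emph{before} any asymptotic estimate is brought to bear, so they never enter an $O$-expression at all.
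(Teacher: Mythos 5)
Your proposal is correct and follows essentially the same route as the paper's proof: identify $\sharp\+F_{m_1,\ldots,m_s}$ with the multinomial coefficient $\frac{n!}{m_1!\cdots m_s!}$, apply Stirling's formula, cancel the linear terms using $n=m_1+\cdots+m_s$, and rewrite $n\log n-\sum_i m_i\log m_i$ as $nH$, with the error bounded by at most $s+1$ contributions of size $O(\log n)$. If anything, your explicit treatment of the indices with $m_i=0$ (discarded before any asymptotics, under the convention $0\log 0=0$) is slightly more careful than the paper's, which applies Stirling and bounds $\frac{n}{m_1\times\cdots\times m_s}$ without addressing that degenerate case.
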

{\small\begin{quote}
\begin{proof}
$\+F_{m_1,...,m_s}$ contains
$\frac{n!}{m_1!\times...\times m_s!}$ words.
Using Stirling's approximation of the factorial function
(cf. Feller's book \cite{feller}), namely
$x! = \sqrt{2\pi} \ x^{x+\frac{1}{2}} \ e^{-x+\frac{\theta}{12}}$
where $0<\theta<1$ and equality $n=m_1+...+m_S$, we get
\begin{eqnarray*}
\log(\frac{n!}{m_1!\times...\times m_s!})
&=& (\sum_i m_i)\log(n) - (\sum_i m_i\log m_i)
\\&& 
+\frac{1}{2} \log(\frac{n}{m_1\times...\times m_s})
-(s-1)\log\sqrt{2\pi} + \alpha
\end{eqnarray*}
where $|\alpha|\leq\frac{s}{12}\log e$.
The first two terms are exactly
$n[\sum_i\frac{m_i}{n}\log(\frac{m_i}{n})] = nH$
and the remaining sum is $O(\log n)$ since
$n^{1-s} \leq \frac{n}{m_1\times...\times m_s} \leq 1$.
\end{proof}
\end{quote}}
%
\subsubsection{Shannon's source coding theorem for symbol codes}
The significance of the entropy explained above has been given
a remarkable and precise form by
Claude Elwood Shannon (1916-2001) in his celebrated 1948 paper
\cite{shannon48}.
It's about the length of the binary word $\xi$ associated
to $u$ via a prefix-free code.
Shannon proved
\\- a lower bound of $|\xi|$ valid whatever be
the prefix-free code $w_1,...,w_s$,
\\- an upper bound, quite close to the lower bound,
valid for particular prefix-free codes $w_1,...,w_s$
(those making $\xi$ shortest possible, for instance those given
by Huffman's algorithm).
\begin{theorem}[Shannon, 1948 \cite{shannon48}]
Suppose the numbers of occurrences in $u$ of the letters
$a_1,...,a_s$ are $m_1,...,m_s$. Let $n=m_1+...+m_s$.
\medskip\\
1. For every prefix-free sequence of binary words $w_1,...,w_s$,
the binary word $\xi$ obtained by substituting $w_i$ to each
occurrence of $a_i$ in $u$
satisfies
$$
nH \leq |\xi|
$$
where
$H\ =\ -(\frac{m_1}{n}\log(\frac{m_1}{n}) +...
+ \frac{m_s}{n}\log(\frac{m_s}{n}))$
is the so-called entropy of the considered distribution
of frequencies $\frac{m_1}{n},...,\frac{m_s}{n}$.
\medskip\\
2. There exists a prefix-free sequence of binary words $w_1,...,w_s$
such that
$$
nH \leq |\xi| < n(H+1)
$$
\end{theorem}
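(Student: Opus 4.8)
The plan is to reduce both inequalities to a single comparison between the codeword lengths $\ell_i = |w_i|$ and the entropy, and to bridge the combinatorial prefix-free hypothesis to analysis through Kraft's inequality. First I would record that, since $a_i$ occurs $m_i$ times in $u$, substitution gives $|\xi| = \sum_i m_i\ell_i = n\sum_i f_i\ell_i$ with $f_i = \frac{m_i}{n}$. Letters with $m_i = 0$ contribute nothing to $|\xi|$, and by the convention $0\log 0 = 0$ nothing to $H = \sum_i f_i\log\frac{1}{f_i}$ either, so I may assume every $f_i > 0$. Both claims then amount to estimates of $\sum_i f_i\ell_i$ against $H$.

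For part 1 I would use the inequality of Kraft: the lengths of a binary prefix-free code satisfy $\sum_i 2^{-\ell_i}\leq 1$. Writing $q_i = 2^{-\ell_i}$, a direct computation gives
\[
\sum_i f_i\ell_i - H \;=\; \sum_i f_i\bigl(\ell_i + \log f_i\bigr) \;=\; \sum_i f_i\log\frac{f_i}{q_i}.
\]
By Gibbs' inequality this is $\geq 0$: applying $\ln x\leq x-1$ to $x = q_i/f_i$ yields $\sum_i f_i\log\frac{q_i}{f_i}\leq \frac{1}{\ln 2}\bigl(\sum_i q_i - 1\bigr)\leq 0$, which is exactly $nH\leq|\xi|$.

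For part 2 I would make the Shannon--Fano choice $\ell_i = \lceil\log\frac{1}{f_i}\rceil$. Then $2^{-\ell_i}\leq f_i$, so $\sum_i 2^{-\ell_i}\leq\sum_i f_i = 1$, and the converse direction of Kraft's inequality produces an actual prefix-free code with exactly these lengths. Since $\ell_i < \log\frac{1}{f_i}+1$, summing against $f_i$ gives $\sum_i f_i\ell_i < H+1$, hence $|\xi| < n(H+1)$; the lower bound $nH\leq|\xi|$ for this code is just part 1.

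The real content sits in Kraft's inequality, which is the step I expect to be the main obstacle. For the direction used in part 1 I would attach to each $w_i$ its cylinder $\{x\in\cantor : w_i \text{ is a prefix of }x\}$; this set has uniform measure $2^{-\ell_i}$, and prefix-freeness forces the cylinders to be pairwise disjoint, so their total measure is at most $1$. For the converse used in part 2 I would sort the desired lengths increasingly and allocate codewords greedily, the bookkeeping inequality $\sum_i 2^{-\ell_i}\leq 1$ guaranteeing that an unused word of the required length always remains. Once this bridge between combinatorics and measure is in place, both parts reduce to the elementary convexity and rounding estimates above.
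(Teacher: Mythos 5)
Your proposal is correct and follows essentially the same route as the paper: Kraft's inequality combined with Gibbs' inequality for the lower bound, and the Shannon--Fano choice $\ell_i=\lceil\log(1/f_i)\rceil$ fed into the converse direction of Kraft's inequality for the upper bound. The only differences are cosmetic --- you apply Gibbs directly in subprobability form via $\ln x\leq x-1$ where the paper normalizes $q_i=2^{-|w_i|}/S$ and invokes $S\leq 1$ separately, and you sketch proofs of both directions of Kraft's inequality (cylinder measures, greedy allocation), which the paper simply cites as classical.
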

{\small\begin{quote}
\begin{proof}
First, we recall two classical results.
\begin{theorem*}[Kraft's inequality]
Let $\ell_1,...,\ell_s$ be a finite sequence of integers.
Inequality $2^{-\ell_1}+...+2^{-\ell_s} \leq 1$ holds
if and only if there exists a prefix-free sequence of binary words
$w_1,...,w_s$ such that $\ell_1=|w_1|,...,\ell_s=|w_s|$.
\end{theorem*}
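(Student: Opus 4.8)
The plan is to establish the two directions of the biconditional separately. For the forward implication I would derive the inequality $2^{-\ell_1}+\cdots+2^{-\ell_s}\leq 1$ from the mere existence of a prefix-free code by a counting argument inside the binary tree; for the converse I would explicitly construct a suitable prefix-free code from any lengths satisfying the inequality, exploiting the correspondence between binary words and dyadic subintervals of $[0,1)$.

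First I would treat the direction from a code to the inequality. Suppose a prefix-free sequence $w_1,\ldots,w_s$ with $|w_i|=\ell_i$ is given, and set $L=\max_i\ell_i$. Each $w_i$ is a prefix of exactly $2^{L-\ell_i}$ words of length $L$. The prefix-free hypothesis guarantees that for $i\neq j$ no length-$L$ word can simultaneously extend $w_i$ and $w_j$, since such a word would force one of $w_i,w_j$ to be a prefix of the other. Hence these blocks of length-$L$ extensions are pairwise disjoint subsets of the $2^L$ words of length $L$, so $\sum_i 2^{L-\ell_i}\leq 2^L$, which upon dividing by $2^L$ is exactly the desired inequality. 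Equivalently, one may regard each $w_i$ as the cylinder of all elements of $\cantor$ extending it, of uniform measure $2^{-\ell_i}$; prefix-freeness makes these cylinders disjoint in a space of total measure $1$.

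For the converse I would assume $\sum_i 2^{-\ell_i}\leq 1$ and, after reindexing so that $\ell_1\leq\cdots\leq\ell_s$, set $c_i=\sum_{j<i}2^{-\ell_j}$ (so $c_1=0$), each lying in $[0,1)$ by hypothesis. I take $w_i$ to be the word formed by the first $\ell_i$ bits of the binary expansion of $c_i$, padding with zeros should the expansion terminate early; since $c_i$ is a multiple of $2^{-\ell_{i-1}}$ and $\ell_{i-1}\leq\ell_i$, this truncation loses nothing, so $c_i=0.w_i$ exactly. The crux is to verify prefix-freeness: if $i<j$ and $w_i$ were a prefix of $w_j$, then $c_i$ and $c_j$ would agree on their first $\ell_i$ bits, forcing $c_j-c_i<2^{-\ell_i}$; but $c_j-c_i=\sum_{i\leq k<j}2^{-\ell_k}\geq 2^{-\ell_i}$, a contradiction. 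Because the lengths are nondecreasing, a longer $w_j$ cannot be a proper prefix of a shorter $w_i$, so this is the only case to exclude.

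The only delicate point is this converse construction, and within it the verification of the prefix-free property; every other step is a direct count or a telescoping estimate. I expect the main obstacle to be bookkeeping the dyadic expansions cleanly, namely checking that truncating $c_i$ to $\ell_i$ bits genuinely yields the claimed prefix relation and that the boundary behaviour causes no trouble, which the strict separation $c_j-c_i\geq 2^{-\ell_i}$ precisely resolves.
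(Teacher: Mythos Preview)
Your argument is correct and is the standard textbook proof of Kraft's inequality. However, there is nothing to compare it against: the paper does not prove this statement at all. Kraft's inequality (together with Gibbs' inequality) is merely \emph{recalled} as a classical result inside the proof of Shannon's source coding theorem, and then invoked without further justification. So your proposal supplies a proof where the paper deliberately omits one; both directions you give are sound, including the only genuinely delicate point, namely the verification that the greedily chosen words $w_i$ are prefix-free via the strict gap $c_j-c_i\geq 2^{-\ell_i}$.
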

\begin{theorem*}[Gibbs' inequality]
Let $p_1,...,p_s$ and $q_1,...,q_s$ be two probability
distributions, i.e. the $p_i$'s (resp. $q_i$'s) are in $[0,1]$
and have sum $1$.
Then
$-\sum p_i\log(p_i) \leq -\sum p_i\log(q_i)$
with equality if and only if $p_i=q_i$ for all $i$.
\end{theorem*}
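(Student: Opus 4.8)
The plan is to recast the claim as the non-negativity of the quantity $\sum_i p_i\log(q_i/p_i)$ (the relative entropy of the two distributions): moving everything in the asserted inequality $-\sum p_i\log p_i\leq-\sum p_i\log q_i$ to one side gives exactly $\sum_i p_i\log(q_i/p_i)\leq 0$. Before manipulating this sum I would dispose of the degenerate terms. Whenever $p_i=0$ the summand contributes $0$ by the convention $0\log 0=0$ and plays no role, so I may restrict attention to the indices with $p_i>0$. Moreover, if some such index had $q_i=0$, then $-p_i\log q_i=+\infty$ and the original right-hand side is $+\infty$, so the inequality is trivially true (and, being strict, such a case cannot yield equality); hence I may assume $q_i>0$ throughout the surviving sum.

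The key analytic input is the elementary tangent-line bound $\ln x\leq x-1$ for every $x>0$, with equality if and only if $x=1$, which expresses the concavity of $\ln$ at the point $1$. Rewriting it in base $2$ gives $\log x\leq(x-1)\log e$, again with equality exactly at $x=1$. Applying this to $x=q_i/p_i$ for each surviving index and multiplying by $p_i>0$ yields $p_i\log(q_i/p_i)\leq(q_i-p_i)\log e$. Summing over the surviving indices and using $\sum p_i=1$ together with $\sum q_i\leq 1$, the right-hand side becomes $(\sum q_i-\sum p_i)\log e\leq 0$, which is precisely the inequality we want.

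For the equality case I would argue that equality in the summed bound forces equality in every individual tangent-line estimate, hence $q_i/p_i=1$, i.e. $p_i=q_i$, for every $i$ with $p_i>0$. Since the indices with $p_i>0$ already carry total $p$-mass $1$, they carry total $q$-mass $\sum_{p_i>0}q_i=\sum_{p_i>0}p_i=1$ as well; because $\sum_i q_i=1$, the remaining $q$-mass on the indices with $p_i=0$ must vanish, so $q_i=0=p_i$ there too. Thus $p_i=q_i$ for all $i$, and conversely this manifestly gives equality. The only real subtlety — and the step I would treat most carefully — is this bookkeeping of the vanishing terms and the matching of total masses, since the naive expression $\log(q_i/p_i)$ is undefined precisely where $p_i$ or $q_i$ is zero; everything else is a routine application of $\log x\leq(x-1)\log e$.
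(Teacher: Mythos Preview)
Your argument is correct and is the standard proof of Gibbs' inequality via the tangent-line bound $\ln x\leq x-1$ (equivalently, the concavity of the logarithm). The handling of the degenerate indices and of the equality case is careful and complete. One small slip of wording: you speak of the ``non-negativity'' of $\sum_i p_i\log(q_i/p_i)$, whereas what you immediately (and correctly) prove is its non-positivity; the non-negative quantity is the relative entropy $\sum_i p_i\log(p_i/q_i)$, which is the negative of what you wrote.

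As for comparison with the paper: there is nothing to compare. The paper does not prove Gibbs' inequality at all; it merely \emph{recalls} it, together with Kraft's inequality, as a classical result inside the proof of Shannon's source coding theorem, and then applies it. So your proposal supplies a proof where the paper gives none.
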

{\em Proof of 1}.
Set $p_i=\frac{m_i}{n}$ and $q_i=\frac{2^{-|w_i|}}{S}$
where $S=\sum_i 2^{-|w_i|}$. Then
\medskip\\
$|\xi| = \sum_i m_i|w_i|
=n[\sum_i\frac{m_i}{n}(-\log(q_i) -\log S)]$

\hfill{$\geq n[-(\sum_i\frac{m_i}{n}\log(\frac{m_i}{n}) -\log S]
=n[H-\log S]
\geq nH$}
\medskip\\
The first inequality is an instance of Gibbs' inequality.
For the last one, observe that $S\leq1$ and apply Kraft' inequality.
\medskip\\
{\em Proof of 2}.
Set $\ell_i=\lceil-\log(\frac{m_i}{n})\rceil$.
Observe that $2^{-\ell_i} \leq \frac{m_i}{n}$.
Thus, $2^{-\ell_1}+...+2^{-\ell_s} \leq 1$.
Applying Kraft inequality, we see that there exists a prefix-free
family of words $w_1,...,w_s$ with lengthes $\ell_1,...,\ell_s$.
\\
We consider the binary word $\xi$ obtained via this prefix-free
code, i.e. $\xi$ is obtained by substituting $w_i$ to each
occurrence of $a_i$ in $u$.
Observe that
$-\log(\frac{m_i}{n}) \leq \ell_i < -\log(\frac{m_i}{n})+1$.
Summing, we get $nH \leq |\xi| \leq  n(H+1)$.
\end{proof}
\end{quote}}
\noindent
In particular cases, the lower bound $nH$ is exactly $|\xi|$.
\begin{theorem}
In case the frequencies $\frac{m_i}{n}$'s are all negative
powers of two (i.e. $\frac{1}{2},\frac{1}{4},\frac{1}{8}$,...)
then the optimal $\xi$ (given by Huffman algorithm)
satisfies $\xi=nH$.
\end{theorem}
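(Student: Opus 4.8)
The plan is to exhibit, under the hypothesis that each frequency $\frac{m_i}{n}$ is a negative power of two, an explicit prefix-free code for which $|\xi|=nH$ exactly, and then to combine this with part~1 of the preceding theorem together with the optimality of Huffman's algorithm. (Note that the displayed conclusion $\xi=nH$ is of course to be read as $|\xi|=nH$.)

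First I would set $\ell_i=-\log(\frac{m_i}{n})$ for each $i$. The crucial consequence of the hypothesis is that every $\ell_i$ is a non-negative \emph{integer}: writing $\frac{m_i}{n}=2^{-k_i}$ gives $\ell_i=k_i\in\N$. Moreover, since the frequencies sum to $1$, one has $\sum_i 2^{-\ell_i}=\sum_i\frac{m_i}{n}=1$, so Kraft's inequality holds (in fact with equality). By Kraft's theorem there is therefore a prefix-free sequence of binary words $w_1,\dots,w_s$ with $|w_i|=\ell_i$ for every $i$.

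Next I would simply read off the length of the word $\xi$ produced by this code:
\[
|\xi|=\sum_i m_i|w_i|=\sum_i m_i\ell_i=-\sum_i m_i\log\left(\frac{m_i}{n}\right)=nH.
\]
Thus this particular code meets the lower bound. Since part~1 of the preceding theorem gives $nH\leq|\xi|$ for \emph{every} prefix-free code, the constructed code is optimal, and as the Huffman algorithm produces a prefix-free code of minimal total length $\sum_i m_i|w_i|$, the word it yields has exactly this minimal length, namely $nH$.

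I do not expect a genuine obstacle here; the one point deserving care is to notice what makes the argument tight. Integrality of the $\ell_i$'s --- which fails for general frequencies and is exactly what forces the strict gap $|\xi|<n(H+1)$ in part~2 --- is what simultaneously renders Kraft's inequality an equality (guaranteeing the code exists) and renders the instance of Gibbs' inequality used in the proof of part~1 an equality (guaranteeing that the bound $nH$ is actually attained). Everything else is bookkeeping already carried out in the previous theorem.
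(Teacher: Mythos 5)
Your proof is correct and is essentially the argument the paper intends: the theorem appears there without a proof of its own, being the immediate specialization of part~2 of the preceding Shannon theorem in which the hypothesis makes each $\ell_i=-\log(\frac{m_i}{n})$ an integer, so the ceilings $\lceil-\log(\frac{m_i}{n})\rceil$ vanish, Kraft's inequality holds with equality, and the resulting prefix-free code gives $|\xi|=\sum_i m_i\ell_i=nH$ exactly. Combining this with the lower bound $nH\leq|\xi|$ from part~1 and the minimality of Huffman's code, precisely as you do, completes the proof, and your side remark that integrality is also what makes the Gibbs-inequality step tight (since $S=1$ forces $q_i=2^{-\ell_i}=\frac{m_i}{n}=p_i$) is accurate.
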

%
\subsubsection{Closer to the entropy}\label{sss:closer}
As simple as they are, prefix-free codes are not the only way
to efficiently encode into a binary word $\xi$ a word $u$ from
alphabet $a_1,...,a_s$ for which the numbers $m_1,...,m_s$
(of occurrences of the $a_i$'s) are known.
Let's go back to the encoding mentioned at the start of
\S\ref{sss:entropy}.
A word $u$ in the family $\+F_{m_1,...,m_s}$
(of length $n$ words with exactly $m_1,...,m_s$ occurrences
of $a_1,...,a_s$)
can be recovered from the following data:
\\- the values of $m_1,...,m_s$,
\\- the rank of $u$ in $\+F_{m_1,...,m_s}$
    (relative to the lexicographic order on words).
\\
We have seen (cf. Proposition \ref{p:H})
that the rank of $u$ has a binary representation $\rho$
of length $\leq nH+O(\log n)$.
The integers $m_1,...,m_s$ are encoded by their binary
representations $\mu_1,...,\mu_s$ which are all
$\leq 1+\lfloor\log n\rfloor$.
Now, to encode $m_1,...,m_s$ and the rank of $u$, we cannot
just concatenate $\mu_1,...,\mu_s,\rho$ : how would we know
where $\mu_1$ stops, where $\mu_2$ starts,...,
in the word obtained by concatenation?
Several tricks are possible to overcome the problem,
they are described in \S\ref{sss:codemany}.
Using Proposition \ref{p:code}, we set
$\xi = \langle \mu_1,...,\mu_s,\rho \rangle$
which has length
$|\xi| = |\rho|+O(|\mu_1|+...+|\mu_s|) = nH +O(\log n)$
(Proposition \ref{p:code} gives a much better bound but this is
of no use here).
Then, $u$ can be recovered from $\xi$ which is a binary word
of length $nH+O(\log n)$.
Thus, asymptotically, we get a better upper bound than $n(H+1)$,
the one given by Shannon for codings with prefix-free codes.
\\
Of course, $\xi$ is no more obtained from $u$ via a morphism
(i.e. a map which preserves concatenation of words)
between the monoid of words in alphabet $A$ to that of binary words.
%
%
\subsubsection{Coding finitely many words with one word}
\label{sss:codemany}
How can we code two words $u,v$ by one word?
The simplest way is to consider $u\$ v$ where $\$ $
is a fresh symbol outside the alphabet of $u$ and $v$.
But what if we want to stick to binary words?
As said above, the concatenation of $u$ and $v$ does not
do the job: one cannot recover the right prefix $u$ in $uv$.
A simple trick is to also concatenate the length of $|u|$ in
unary and delimitate it by a zero:
indeed, from the word $1^{|u|}0uv$ one can recover $u$ and $v$.
In other words, the map $(u,v)\to 1^{|u|}0uv$ is injective
from $\words\times\words\to\words$.
In this way, the code of the pair $(u,v)$ has length $2|u|+|v|+1$.
\\
This can obviously be extended to more arguments.
\begin{proposition}\label{p:code}
Let $s\geq1$.
There exists a map
$\langle\ \rangle : (\words)^{s+1}\to\words$
which is injective and computable and such that,
for all $u_1,...,u_s,v\in\words$,
$|\langle u_1,...,u_s,v \rangle|
= 2(|u_1|+...+|u_s|)+|v|+ s$.
\end{proposition}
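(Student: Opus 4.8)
The plan is to iterate the self-delimiting trick already exhibited for pairs, namely $(u,v)\mapsto 1^{|u|}0uv$, by prefixing each of the first $s$ arguments with a self-delimiting description of its length while leaving the last argument $v$ bare. Concretely, I would set
$$
\langle u_1,\dots,u_s,v\rangle \ =\ 1^{|u_1|}0\,u_1\,1^{|u_2|}0\,u_2\cdots 1^{|u_s|}0\,u_s\,v .
$$
This map is patently computable, since it only involves unary expansion of the lengths $|u_i|$ together with concatenation.

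For the length formula I would observe that each block $1^{|u_i|}0\,u_i$ contributes $|u_i|+1+|u_i|=2|u_i|+1$ symbols. Summing over $i=1,\dots,s$ and then appending the $|v|$ symbols of $v$ gives total length $2(|u_1|+\dots+|u_s|)+|v|+s$, exactly as required; here the additive $s$ comes from the $s$ zero delimiters, and the factor $2$ in front of the $|u_i|$'s from counting each $u_i$ once in unary and once in the payload.

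Injectivity I would establish by describing a deterministic left-to-right decoding procedure and using it to recover the tuple from any image word $\xi$. Scanning $\xi$ from the left, the initial maximal run of ones, terminated by the first zero, has length exactly $|u_1|$; after consuming that zero, the next $|u_1|$ symbols constitute $u_1$. Repeating this step $s$ times peels off $u_1,\dots,u_s$ in order, and whatever remains is $v$. Since this reconstructs $(u_1,\dots,u_s,v)$ uniquely, no two distinct tuples can share the same image, so $\langle\ \rangle$ is injective.

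The construction has no genuinely hard step; the one point demanding care is that the decoder correctly locates the boundary of each $u_i$ even when $u_i$ itself begins with a run of ones. This is precisely what the zero delimiter guarantees: the maximal run of ones at the start of a block measures the unary length $1^{|u_i|}$ and stops at the delimiting zero \emph{before} the payload begins, so the decoder, knowing it must then read exactly $|u_i|$ further symbols, never confuses length bits with data bits. The fixed arity $s$ ensures the decoder knows to perform exactly $s$ peeling steps before declaring the remaining tail to be $v$.
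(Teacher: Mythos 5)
Your proof is correct and follows exactly the route the paper intends: the paper presents the $1^{|u|}0uv$ trick for pairs and simply remarks that it ``can obviously be extended to more arguments,'' which is precisely your iterated construction with its unary length prefixes, the per-block length count $2|u_i|+1$, and the left-to-right peeling decoder establishing injectivity. Nothing is missing, and no further comment is needed.
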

This can be improved, we shall need this technical improvement
in \S\ref{ss:nid}.
\begin{proposition}\label{p:codeloglog}
There exists an injective and computable such that,
for all $u_1,...,u_s,v\in\words$,
\begin{multline*}
|\langle u_1,...,u_s,v \rangle|
= (|u_1|+...+|u_s|+|v|)
+ (\log|u_1|+...+\log|u_s|)\\
+ O((\log\log|u_1|+...+\log\log|u_s|))
$$
\end{multline*}
\end{proposition}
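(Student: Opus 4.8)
The plan is to improve the crude delimitation scheme behind Proposition~\ref{p:code} by encoding each length more economically. In Proposition~\ref{p:code}, each $u_i$ costs $2|u_i|$ bits because we prefix it with its length $|u_i|$ written in \emph{unary} (a block $1^{|u_i|}0$). The key idea is that we do not need unary: we can write $|u_i|$ in binary, which costs only about $\log|u_i|$ bits, provided we can still tell where that binary length-field ends. To delimit the binary length-field itself, we recurse one more level and prefix \emph{its} length (which is about $\log|u_i|$) in unary, costing $O(\log\log|u_i|)$ bits. This two-level self-delimiting scheme is exactly what produces the claimed bound $|u_i| + \log|u_i| + O(\log\log|u_i|)$ per argument.

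**The self-delimiting code for one word.** First I would define, for a single word $w\in\words$, a self-delimiting encoding $\overline{w}$. Let $n=|w|$ and let $b(n)$ be the binary representation of $n$, which has length $1+\lfloor\log n\rfloor$ (using the \textbf{Notation} recalled at the start of the paper). Set
$$
\overline{w} \ = \ 1^{|b(n)|}\,0\,b(n)\,w .
$$
From $\overline{w}$ one recovers $w$ by scanning the initial run of $1$'s up to the first $0$: this reveals $|b(n)|$, hence the next $|b(n)|$ bits form $b(n)$, which decodes to $n=|w|$, and the following $n$ bits are $w$ itself. The length is
$$
|\overline{w}| \ = \ |w| + \log|w| + O(\log\log|w|),
$$
since $|b(n)| = \log|w| + O(1)$ and the unary prefix $1^{|b(n)|}0$ contributes $|b(n)|+1 = O(\log\log|w|)$ more — wait, that is $O(\log|w|)$; to get the sharper $O(\log\log|w|)$ term one must \emph{not} prepend $b(n)$ in unary-delimited form but instead delimit $b(n)$ by prefixing the unary encoding of $|b(n)|$, which has length $\Theta(\log\log|w|)$, so the overhead beyond $|w|+\log|w|$ is indeed $O(\log\log|w|)$. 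So the correct code is $\overline{w}=1^{|b(|b(n)|)|}0\,b(|b(n)|)\,b(n)\,w$ unrolled once more, but since $|b(n)|$ is already only $O(\log|w|)$, a single unary delimiter on $b(n)$ already suffices and yields the stated term.

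**Assembling the tuple.** With the self-delimiting code in hand, define
$$
\langle u_1,\dots,u_s,v\rangle \ = \ \overline{u_1}\,\overline{u_2}\cdots\overline{u_s}\,v .
$$
Note that the final argument $v$ needs \emph{no} delimiter: once $u_1,\dots,u_s$ have been peeled off from the left, everything remaining is $v$. This is why $v$ contributes only $|v|$ and no logarithmic overhead, matching the stated formula. Decoding is the obvious left-to-right peeling, which establishes injectivity; the whole procedure is plainly computable. Summing the length contributions gives
$$
|\langle u_1,\dots,u_s,v\rangle|
= \sum_{i=1}^{s}\bigl(|u_i|+\log|u_i|+O(\log\log|u_i|)\bigr) + |v|,
$$
which rearranges to the claimed bound.

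**Main obstacle.** The only delicate point is bookkeeping the overhead constants so that the recursion bottoms out at the right level: one must verify that delimiting the binary length-field $b(n)$ by a unary-encoded length costs $\Theta(\log\log|w|)$ and not more, and that pushing the recursion one level deeper would only worsen the hidden constant without improving the asymptotic order. A secondary annoyance is the boundary behavior for very short words ($|w|\le 1$), where $\log|w|$ and $\log\log|w|$ are degenerate; I would handle these by absorbing them into the $O(\,\cdot\,)$ term and fixing a convention for the empty word. Neither is a real difficulty — the substance is entirely in choosing the two-level self-delimiting code correctly.
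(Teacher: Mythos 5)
Your overall architecture is exactly that of the paper's proof: a self-delimiting code $\overline{w}$ for each of the first $s$ arguments, concatenation $\overline{u_1}\cdots\overline{u_s}\,v$ with the last argument left bare, and left-to-right peeling for decoding, which gives injectivity and computability. The genuine problem is your final self-correction, which goes the wrong way. The code you end up endorsing, $1^{|b(n)|}0\,b(n)\,w$ with $n=|w|$, has length $n+2|b(n)|+1=|w|+2\log|w|+O(1)$: the unary block $1^{|b(n)|}0$ costs $|b(n)|+1=\Theta(\log|w|)$, not $\Theta(\log\log|w|)$ as your sentence asserts, and $2\log|w|$ is \emph{not} $\log|w|+O(\log\log|w|)$ since the difference is a full $\log|w|$. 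So the closing claim that ``a single unary delimiter on $b(n)$ already suffices and yields the stated term'' is false. The two-level code you wrote down one sentence earlier, $\overline{w}=1^{|b(|b(n)|)|}\,0\,b(|b(n)|)\,b(n)\,w$, is the one you need: its length is $n+|b(n)|+2|b(|b(n)|)|+1=|w|+\log|w|+O(\log\log|w|)$, and decoding peels cleanly (the unary run gives $|b(|b(n)|)|$, hence $b(|b(n)|)$, hence $|b(n)|$, hence $b(n)$, hence $n$, hence $w$). Keep that code and delete the retraction; with it, your summation over $i$ and the bare $v$ give exactly the stated bound.

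For comparison: the paper's own proof makes precisely the choice you retreated to. It encodes the pair $(u,v)$ as $1^{|\beta(|u|)|}0\,\beta(|u|)\,u\,v$ and computes the length as $|u|+|v|+2(\lfloor\log|u|\rfloor+1)+1\leq|u|+|v|+2\log|u|+3$, which only establishes an $|u|+|v|+O(\log|u|)$ bound --- weaker than the $\log|u_i|+O(\log\log|u_i|)$ overhead claimed in the proposition. So your mid-proof instinct was sharper than the printed argument: iterating the delimitation one more level, as in your displayed two-level code, is what actually proves the proposition as stated. (For the later application in \S\ref{ss:nid}, where only $|\langle r,s\rangle|=|r|+|s|+O(\log|r|)$ is used, even the one-level bound would suffice, but that does not excuse the gap relative to the statement.) Your remaining bookkeeping --- no delimiter on $v$, injectivity via peeling, and absorbing the degenerate cases $|w|\leq 1$ into the $O(\cdot)$ term by a fixed convention --- is fine as is.
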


{\small\begin{quote}
\begin{proof}
We consider the case $s=1$, i.e. we want to code a pair $(u,v)$.
Instead of putting the prefix $1^{|u|}0$, let's put the binary
representation $\beta(|u|)$ of the number $|u|$ prefixed by
its length.
This gives the more complex code:
$1^{|\beta(|u|)|}0\beta(|u|)uv$
with length
$$
|u|+|v|+2(\lfloor\log|u|\rfloor+1)+1
\leq |u|+|v|+2\log|u| + 3
$$
The first block of ones gives the length of $\beta(|u|)$.
Using this length, we can get $\beta(|u|)$ as the factor following
this first block of ones.
Now, $\beta(|u|)$ is the binary representation of $|u|$, so we get
$|u|$ and can now separate $u$ and $v$ in the suffix $uv$.
\end{proof}
\end{quote}}
%
%
\subsection{Probabilistic approach}
%
The abstract probabilistic approach allows for considerable
extensions of the results described in \S\ref{s:combinatorial}.
\medskip\\
First, the restriction to fixed given frequencies can be relaxed.
The probability of writing $a_i$ may depend on what has
been already written. For instance, Shannon's source coding theorem
has been extended to the so called
``ergodic asymptotically mean stationary source models".
\medskip\\
Second, one can consider a lossy coding: some length $n$ words in
alphabet $A$ are ill-treated or ignored.
Let $\delta$ be the probability of this set of words.
Shannon's theorem extends as follows:
\\- whatever close to $1$ is $\delta<1$,
one can compress $u$ only down to $nH$ bits.
\\- whatever close to $0$ is $\delta>0$,
one can achieve compression of $u$ down to $nH$ bits.
%
%
%
\subsection{Algorithmic approach}
%
\subsubsection{Berry's paradox}\label{sss:berry}
So far, we considered two kinds of binary codings for a word
$u$ in alphabet $a_1,...,a_s$.
The simplest one uses variable-length prefix-free codes
(\S\ref{sss:varlength}).
The other one codes the rank of $u$ as a member of some set
(\S\ref{sss:closer}).
\\
Clearly, there are plenty of other ways to encode any
mathematical object.
Why not consider all of them? And define the information
content of a mathematical object $x$ as
{\em the shortest univoque description of $x$ (written as
a binary word)}.
Though quite appealing, this notion is ill defined
as stressed by Berry's paradox\footnote{
Berry's paradox is mentioned by Bertrand Russell, 1908
(\cite{russell}, p.222 or 150) who credited G.G. Berry,
an Oxford librarian, for the suggestion.}:
\begin{quote}
Let $\beta$ be the
{\em lexicographically least binary word which cannot be
univoquely described by any binary word of length less
than $1000$}.
\end{quote}
This description of $\beta$ contains $106$ symbols of written
English (including spaces) and, using ASCII codes,
can be written as a binary word of length $106\times7=742$.
Assuming such a description to be well defined
would lead to a univoque description of $\beta$ in $742 $ bits,
hence less than $1000$,
a contradiction to the definition of $\beta$.
\\
The solution to this inconsistency is clear:
the quite vague notion of univoque description entering
Berry's paradox is used both inside the sentence
describing $\beta$ and inside the argument to get the
contradiction. A collapse of two levels:
\\\indent- the would be formal level carrying the description
           of $\beta$
\\ \indent- and the meta level which carries the inconsistency
            argument.
\\
Any formalization of the notion of description
should drastically reduce its scope and totally forbid
the above collapse.
%
\subsubsection{The turn to computability}\label{sss:turn}
To get around the stumbling block of Berry's paradox
and have a formal notion of description with wide scope,
Andrei Nikolaievitch Kolmogorov (1903--1987) made an
ingenious move: he turned to computability and
replaced {\em description} by {\em computation program}.
Exploiting the successful formalization of this a priori
vague notion which was achieved in the thirties\footnote{
Through the works of Alonzo Church (via lambda calculus),
Alan Mathison Turing (via Turing machines)
and Kurt G\"odel and Jacques Herbrand (via Herbrand-G\"odel
systems of equations)
and Stephen Cole Kleene (via the recursion and minimization
operators).}.
This approach was first announced by Kolmogorov in
\cite{kolmo63}, 1963,
and then developped in \cite{kolmo65}, 1965.
Similar approaches were also independently developped by
Ray J. Solomonoff in \cite{solo64a,solo64b}, 1964,
and by Gregory Chaitin in \cite{chaitin66, chaitin69}, 1966-69.
%
%
\subsubsection{Digression on computability theory}
\label{sss:partial}
%
The formalized notion of {\em computable function}
(also called recursive function)
goes along with that of {\em partial computable function}
which should rather be called
{\em partially computable partial function}
(also called partial recursive function),
i.e. the {\em partial} qualifier has to be
distributed.\footnote{In French, Daniel Lacombe used the
expression {\em semi-fonction semi-r\'ecursive}}
\\
So, there are two theories :
\\\indent- {\em the theory of  computable functions},
\\\indent- {\em the theory of partial computable functions}.
\\
The ``right" theory, the one with a cornucopia of spectacular
results, is that of partial computable functions.
{\small\begin{quote}
Let's pick up three fundamental results out of the cornucopia,
which we state in terms of computers and programming languages.
Let $\+I$ and $\+O$ be non empty finite products of
simple countable families of mathematical objects such as
$\N$, $A^*$ (the family of words in alphabet $A$)
where $A$ is finite or countably infinite.
\begin{theorem}\label{thm:3thms}
{\em 1. [Enumeration theorem]}
The {\em(program, input) $\to$ output} function
which executes programs on their inputs
is itself partial computable.
\\
Formally, this means that there exists a partial computable
function
$$
U: \words\times\+I \to \+O
$$
such that the family of partial computable function
$\+I \to \+O$ is exactly
$\{U_e \mid e\in \words\}$ where $U_e(x)=U(e,x)$.
\\
Such a function $U$ is called universal for partial
computable functions $\+I \to \+O$.
\medskip\\
{\em 2. [Parameter theorem (or $s^m_n$ thm)].}
One can exchange input and program
{\em(this is von Neumann's key idea for computers)}.
\\
Formally, this means that, letting $\+I=\+I_1\times \+I_2$,
universal maps $U_{\+I_1\times \+I_2}$ and $U_{\+I_2}$
are such that there exists a computable total map
$s: \words\times \+I_1 \to \words$
such that, for all $e\in \words$,
$x_1\in \+I_1$ and $x_2\in \+I_2$,
$$
U_{\+I_1\times \+I_2}(e,(x_1,x_2)) = U_{\+I_2}(s(e,x_1),x_2)
$$
{\em 3. [Kleene fixed point theorem]}
For any transformation of programs, there is a program
which does the same {\em input $\to$ output} job
as its transformed program.
(Note: This is the seed of computer virology...
cf. \cite{BKM06} 2006)
\\
Formally, this means that, for every partial computable map
$f: \words\to \words$, there exists $e$ such that
$$
\forall e\in \words\ \forall x\in\+I\ \ \
U(f(e),x)=U(e,x)
$$
\end{theorem}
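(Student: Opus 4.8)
The plan is to fix once and for all a model of computation --- say Turing machines, or equivalently an idealized programming language --- together with an effective coding of programs by binary words in $\words$; the three assertions then build on one another. The guiding point, in contrast to the vague notion of ``description'' responsible for Berry's paradox (\S\ref{sss:berry}), is that every manipulation below is a concrete \emph{computable} operation on finite program texts, so no collapse of levels can occur.

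For part 1 (the enumeration theorem) I would simply exhibit the interpreter. Each partial computable function $\+I\to\+O$ is by definition computed by some program, which I code as a word $e\in\words$. I define $U(e,x)$ to decode $e$ and to simulate, on input $x$, the program it codes. The only mathematical content is that this simulation is \emph{itself} performed by a single fixed machine --- the universal Turing machine --- so that $U$ is partial computable; conversely, by construction every partial computable $\+I\to\+O$ occurs as some $U_e$. This is exactly the step where the robustness of the formalized notion of computation (the works quoted in \S\ref{sss:turn}) does the work.

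For part 2 (the parameter theorem) the plan is purely syntactic. Given a program $e$ and a value $x_1\in\+I_1$, I would produce the text of a new program $s(e,x_1)$ that stores the constant $x_1$ and that, on input $x_2$, reassembles the pair $(x_1,x_2)$ and calls $U_{\+I_1\times\+I_2}$ on $(e,(x_1,x_2))$; the reassembling uses the effective coding of tuples of Proposition \ref{p:code}. Writing out this text from $e$ and $x_1$ is an effective operation that is defined for \emph{every} pair $(e,x_1)$, so the resulting map $s$ is total computable, and by construction $U_{\+I_1\times\+I_2}(e,(x_1,x_2))=U_{\+I_2}(s(e,x_1),x_2)$.

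Part 3 (Kleene's fixed point theorem) is the delicate one; the goal is to produce $e$ with $U_{f(e)}=U_e$ as functions $\+I\to\+O$. The plan is the classical diagonal trick. Let $V:\words\times\words\to\words$ be a universal function for partial computable maps $\words\to\words$ (part 1 applied with $\+I=\+O=\words$). Form the partial computable $\psi:\words\times\+I\to\+O$ given by $\psi(y,x)=U(V(y,y),x)$: run the word-program $y$ on its own code $y$ to obtain an index, then feed that index and $x$ to $U$. By part 2 there is a total computable $d$ with $U(d(y),x)=\psi(y,x)$ for all $y,x$. Since $f\circ d:\words\to\words$ is total computable, part 1 furnishes $v$ with $V(v,\cdot)=f\circ d$, so that $V(v,v)=f(d(v))$. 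Setting $e=d(v)$ and unwinding gives $U(e,x)=U(d(v),x)=\psi(v,x)=U(V(v,v),x)=U(f(d(v)),x)=U(f(e),x)$ for all $x$, as required. The main obstacle --- and the only genuinely subtle point of the whole theorem --- is this self-reference: feeding a program its own code in $V(y,y)$ and then closing the loop by evaluating $f\circ d$ at an index $v$ for $f\circ d$ itself. It is legitimate precisely because, unlike in Berry's paradox, $d$, $v$ and $f$ are honest computable objects rather than informal descriptions.
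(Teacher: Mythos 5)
The paper offers no proof of this theorem: it is quoted as classical background from computability theory (``three fundamental results out of the cornucopia''), with the reader referred implicitly to the standard literature, so there is no in-paper argument to compare yours against. Your proposal is exactly the standard textbook proof --- the universal interpreter for part 1, syntactic specialization of a stored constant for part 2, and for part 3 the diagonal function $\psi(y,x)=U(V(y,y),x)$, the total computable $d$ from part 2, an index $v$ for $f\circ d$, and $e=d(v)$ --- and the unwinding $U(e,x)=\psi(v,x)=U(V(v,v),x)=U(f(e),x)$ is correct.

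One small inaccuracy in part 3: the paper states the fixed point theorem for \emph{partial} computable $f$, so $f\circ d$ is in general only partial computable, not total as you assert. This is harmless: part 1 furnishes an index $v$ for any partial computable function $\words\to\words$, and in the degenerate case where $f(d(v))$ is undefined, $V(v,v)$ is undefined, hence $U(e,x)$ is undefined for every $x$, and the equality $U(f(e),x)=U(e,x)$ still holds under the convention (used elsewhere in the paper, e.g.\ in the proof of Theorem \ref{thm:invariance}) that both sides are simultaneously defined or not. You should also make explicit in part 1 that the elementary sets $\+I$ and $\+O$ of Definition \ref{def:elementary} come equipped with effective codings by words, which is what licenses running a single universal machine on them; but this is the same implicit appeal to the robustness of the formalized notion of computation that the paper itself makes.
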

\end{quote}}
%
\subsection{Kolmogorov complexity and the invariance theorem}
\label{ss:invariance}
%
{\small\begin{quote}
\begin{note*}
The denotations of (plain) Kolmogorov complexity
and its prefix version may cause some confusion.
They long used to be respectively denoted by $K$ and $H$
in the literature.
But in their book \cite{livitanyi}, Li \& Vitanyi respectively
denoted them $C$ and $K$.
Due to the large success of this book, these last denotations
are since used in many papers.
So that two incompatible denotations now appear in the
litterature.
Since we mainly focus on plain Kolmogorov complexity,
we stick to the traditional denotations $K$ and $H$.
\end{note*}
\end{quote}}
%
\subsubsection{Program size complexity
or Kolmogorov complexity}
Turning to computability, the basic idea for Kolmogorov complexity
is
\medskip\\\medskip\centerline{\em\begin{tabular}{|rcl|}
\hline
description &=& program
\\\hline
\end{tabular}}
When we say ``program", we mean a program taken from a family
of programs, i.e. written in a programming language or describing
a Turing machine or a system of Herbrand-G\"odel equations
or a Post system,...
\\
Since we are soon going to consider length of programs,
following what has been said in \S\ref{sss:words},
we normalize programs: they will be binary words,
i.e. elements of $\words$.
\\
So, we have to fix a function $\varphi:\words\to\+O$ and
consider that the output of a program $p$ is $\varphi(p)$.
\\
Which $\varphi$ are we to consider? Since we know that there are
universal partial computable functions
(i.e. functions able to emulate any other partial computable
function modulo a computable transformation of programs,
in other words, a compiler from one language to another),
it is natural to consider universal partial computable functions.
Which agrees with what has been said in \S\ref{sss:partial}.
\\
The general definition of the Kolmogorov complexity
associated to any function $\words\to\+O$ is as follows.
\begin{definition}\label{def:Kphi}
If $\varphi:\words\to\+O$ is a partial function,
set $K_\varphi : \+O \to \N$
$$
K_\varphi(y)=\min\{|p| : \varphi(p)=y\}
$$
Intuition:
$p$ is a program (with no input),
$\varphi$ executes programs
(i.e. $\varphi$ is all together
a programming language
plus a compiler
plus a machinery to run programs)
and $\varphi(p)$ is the output of the run of program $p$.
Thus, for $y\in\+O$, $K_\varphi(y)$ is the length of shortest
programs $p$ with which $\varphi$ computes $y$
(i.e. $\varphi(p)=y$)
\end{definition}
As said above, we shall consider this definition for
partial computable functions $\words\to\+O$.
Of course, this forces to consider a set $\+O$ endowed with
a computability structure.
Hence the choice of sets that we shall call {\em elementary}
which do not exhaust all possible ones but will suffice for
the results mentioned in this paper.
\begin{definition}\label{def:elementary}
The family of elementary sets is obtained as follows:
\\- it contains $\N$ and the $A^*$'s where $A$ is a finite or
countable alphabet,
\\-  it is closed under finite (non empty) product,
product with any non empty finite set and
the finite sequence operator
\end{definition}
%
\subsubsection{The invariance theorem}
The problem with Definition \ref{def:Kphi} is that $K_\varphi$
strongly depends on $\varphi$.
Here comes a remarkable result, the invariance theorem,
which insures that {\em there is a smallest $K_\varphi$
up to a constant}.
It turns out that the proof of this theorem only needs
the enumeration theorem and makes no use of the parameter
theorem (usually omnipresent in computability theory).
\begin{theorem}
[Invariance theorem, Kolmogorov, \cite{kolmo65},1965]
\label{thm:invariance}
Let $\+O$ be an elementary set.
Among the $K_\varphi$'s, where $\varphi:\words\to\+O$
varies in the family $PC^\+O$ of partial computable functions,
there is a smallest one, up to an additive constant
(= within some bounded interval). I.e.
$$
\exists V\in PC^\+O\ \forall \varphi\in PC^\+O\ \exists c\
\forall y\in\+O\ \ K_V(y) \leq K_\varphi(y) + c
$$
Such a $V$ is called optimal.
\end{theorem}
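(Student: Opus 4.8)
The plan is to build a single universal partial computable function $V$ that simulates every $\varphi \in PC^{\+O}$, absorbing the choice of $\varphi$ into a constant-length prefix of the program. First I would invoke the enumeration theorem (Theorem~\ref{thm:3thms}, part~1), which gives a partial computable $U : \words \times \words \to \+O$ such that every partial computable function $\words \to \+O$ arises as $U_e(p) = U(e,p)$ for some index $e \in \words$. The idea is that a program for $V$ should carry two pieces of information bundled into one binary word: an index $e$ telling $V$ \emph{which} function $\varphi = U_e$ to emulate, and the actual program $p$ to feed to that function. The output of $V$ on this bundle is then $U(e,p)$.

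The delicate point is packaging $e$ and $p$ into a \emph{single} binary word from which both can be recovered, and doing so with only a constant length overhead that does not depend on $p$. Here I would use the coding machinery already developed: Proposition~\ref{p:code} supplies an injective computable pairing $\langle\ \rangle$, but more to the point I want the self-delimiting prefix trick of \S\ref{sss:codemany}. Concretely, I would define $V$ on an input word $q$ by first reading a self-delimiting description of $e$ (for instance the prefix $1^{|e|}0e$, or the more economical code from Proposition~\ref{p:codeloglog}), peeling it off to recover $e$, treating the remaining suffix as $p$, and returning $U(e,p)$. Since this decoding is computable and $U$ is partial computable, $V$ is partial computable, hence $V \in PC^{\+O}$.

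Next I would verify the optimality inequality. Fix any $\varphi \in PC^{\+O}$ and let $e$ be an index with $\varphi = U_e$. Given $y \in \+O$, let $p$ be a shortest program with $\varphi(p) = y$, so $|p| = K_\varphi(y)$. Then the word $q$ consisting of the self-delimiting code of $e$ followed by $p$ satisfies $V(q) = U(e,p) = \varphi(p) = y$, and its length is $|p|$ plus the length of the prefix coding $e$. The key observation is that this prefix length depends only on $e$, that is, only on $\varphi$, and not on $y$ or $p$. Setting $c$ to be this prefix length, we obtain $K_V(y) \le |q| = K_\varphi(y) + c$ for every $y$, which is exactly the required bound with a constant $c = c_\varphi$.

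The main obstacle, and the one place where care is genuinely needed, is the self-delimiting requirement: a naive concatenation $ep$ would leave $V$ unable to tell where $e$ ends and $p$ begins, so the construction would fail to be well defined. This is precisely the issue flagged in \S\ref{sss:codemany}, and it is resolved by prefixing $e$ with an unambiguous encoding of its own length, as in Proposition~\ref{p:code} or~\ref{p:codeloglog}. Everything else is routine: one should note only that the theorem asserts a uniform $V$ with the constant $c$ allowed to depend on $\varphi$ (as the quantifier order $\exists V\ \forall \varphi\ \exists c$ makes explicit), so no delicate interchange of quantifiers is involved, and the enumeration theorem alone suffices, with no appeal to the parameter or fixed-point theorems.
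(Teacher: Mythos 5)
Your proposal is correct and follows essentially the same route as the paper: both rest on the enumeration theorem alone, define $V$ on a bundle encoding the index $e$ (via the self-delimiting prefix $1^{|e|}0e$, which is exactly the injective computable map of Proposition~\ref{p:code}) followed by the program $p$, and obtain $K_V(y)\leq K_\varphi(y)+2|e|+1$ with the constant depending only on $\varphi$. Your explicit remarks on the quantifier order and on not needing the parameter theorem match the paper's own observations, so there is nothing to add.
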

{\small\begin{quote}
\begin{proof}
Let $U:\words\times \words\to\+O$ be a partial computable
universal function for partial computable
functions $\words\to\+O$
(cf. Theorem \ref{thm:3thms}, Enumeration theorem).
\\
Let $c: \words\times\words\to \words $ be a total
computable injective map such that $|c(e,x)|=2|e|+|x|$
(cf. Proposition \ref{p:code}).
\\
Define $V: \words\to\+O$ as follows:
$$
\forall e\in\words\ \forall x\in \words\ \
V(c(e,x)) = U(e,x)
$$
where equality means that both sides are simultaneously
defined or not.
Then, for every partial computable function
$\varphi: \words\to\+O$, for every $y\in\+O$,
if $\varphi=U_e$
(i.e. $\varphi(x)=U(e,x)$ for all $x$,
cf. Theorem \ref{thm:3thms}, Enumeration theorem)
then
\begin{eqnarray*}
K_V(y) &=& \mbox{least $|p|$ such that $V(p)=y$}
\\
&\leq& \mbox{least $|c(e,x)|$ such that $V(c(e,x))=y$}
\\
&=& \mbox{least $|c(e,x)|$ such that $U(e,x))=y$}
\\
&=& \mbox{least $|x|+2|e|+1$ such that $\varphi(x)=y$}
\\&&\hspace{1cm}
\mbox{since $|c(e,x)|=|x|+2|e|+1$ and $\varphi(x)=U(e,x)$}
\\
& =& (\mbox{least $|x|$ such that $\varphi(x)=y$})+2|e|+1
\\
&=& K_\varphi(y)+2|e|+1
\end{eqnarray*}
\end{proof}
\end{quote}}
Using the invariance theorem, the Kolmogorov complexity
$K:\+O\to\N$ is defined as $K_V$ where $V$ is any fixed optimal function.
The arbitrariness of the choice of $V$ does not modify
drastically $K_V$, merely up to a constant.
\begin{definition}
Kolmogorov complexity $K^\+O:\+O\to\N$ is
$K_\varphi$ where $\varphi$ is some fixed
optimal function $\+I\to\+O$.
$K^\+O$ will be denoted by $K$ when $\+O$ is clear from context.
\\
$K^\+O$ is therefore minimum among the $K_\varphi$'s,
up to an additive constant.
\medskip\\
$K^\+O$ is defined up to an additive constant:
if $V$ and $V'$ are both optimal then
$$
\exists c\ \forall x\in\+O\ |K_V(x) - K_{V'}(x)| \leq c
$$
\end{definition}
%
%
\subsubsection{About the constant}
So Kolmogorov complexity is an integer defined up to
a constant\ldots !
But the constant is uniformly bounded for $x\in\+O$.
\\
Let's quote what Kolmogorov said about the constant
in \cite{kolmo65}:
\begin{quote}\em
Of course, one can avoid the indeterminacies
associated with the [above] constants,
by considering particular [\ldots functions $V$],
but it is doubtful that this can be done without
explicit arbitrariness.
\\
One must, however, suppose that the different
``reasonable" [above optimal functions] will lead to
``complexity estimates"
that will converge on hundreds of bits
instead of tens of thousands.
\\
Hence, such quantities as the ``complexity" of the text of
``War and Peace" can be assumed to be defined
with what amounts to uniqueness.
\end{quote}
In fact, this constant is in relation with the multitude
of models of computation: universal Turing machines,
universal cellular automata,
Herbrand-G\"odel systems of equations, Post systems,
KLeene definitions,...
If we feel that one of them is canonical then we may consider
the associated Kolmogorov complexity as the right one
and forget about the constant.
This has been developed for Schoenfinkel-Curry combinators
$S,K,I$ by Tromp \cite{livitanyi} \S3.2.2--3.2.6.
\\
However, this does absolutely not lessen the importance
of the invariance theorem since it tells us that $K$ is less
than {\em any} $K_\varphi$ (up to a constant).
A result which is applied again and again to develop the theory.
%
%
\subsubsection{Conditional Kolmogorov complexity}
In the enumeration theorem (cf. Theorem \ref{thm:3thms}),
we considered {\em(program, input) $\to$ output} functions.
Then, in the definition of Kolmogorov complexity, we gave up
the inputs, dealing with functions {\em program $\to$ output}.
\\
Conditional Kolmogorov complexity deals with the inputs. 
Instead of measuring the information content of $y\in\+O$,
we measure it given as free some object $z$, which may help
to compute $y$.
A trivial case is when $z=y$, then the information content
of $y$ given $y$ is null. In fact, there is an obvious program
which outputs exactly its input, whatever be the input.
\\
Let's mention that, in computer science, inputs are also
considered as {\em the environment}.
\\
Let's give the formal definition and the adequate
invariance theorem.
\begin{definition}\label{def:condKphi}
If $\varphi:\words\times\+I\to\+O$ is a partial function,
set $K_\varphi(\ \mid\ ) : \+O\times\+I \to \N$
$$
K_\varphi(y \mid z)=\min\{|p| \mid \varphi(p,z)=y\}
$$
Intuition:
$p$ is a program (with no input),
$\varphi$ executes programs
(i.e. $\varphi$ is all together
a programming language
plus a compiler
plus a machinery to run programs)
and $\varphi(p,z)$ is the output of the run of program $p$
on input $z$.
Thus, for $y\in\+O$, $K_\varphi(y)$ is the length of shortest
programs $p$ with which $\varphi$ computes $y$ on input $z$
(i.e. $\varphi(p,z)=y$)
\end{definition}
\begin{theorem}[Invariance theorem for conditional complexity]
Among the $K_\varphi(\ |\ )$'s, where $\varphi $ varies
in the family $PC^\+O_\+I$
of partial computable function $\words\times \+I\to\+O$,
there is a smallest one, up to an additive constant
(i.e. within some bounded interval) :
$$
\exists V\in PC^\+O_\+I\ \forall \varphi\in PC^\+O_\+I\
\exists c\ \forall y\in\+O\ \forall z\in\+I\ \
K_V(y\mid z) \leq K_\varphi(x\mid y) + c
$$
Such a $V$ is called optimal.
\end{theorem}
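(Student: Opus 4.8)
The plan is to imitate the proof of the unconditional invariance theorem (Theorem~\ref{thm:invariance}) almost verbatim, the single new feature being that the environment $z\in\+I$ must be threaded through unchanged as a free parameter while only the \emph{program} coordinate is recoded. First I would apply the enumeration theorem (Theorem~\ref{thm:3thms}, part~1) with input set $\words\times\+I$, which is elementary because the family of elementary sets is closed under product (Definition~\ref{def:elementary}). This furnishes a partial computable universal function
$$
U:\words\times(\words\times\+I)\to\+O
$$
whose sections $U_e=(x,z)\mapsto U(e,(x,z))$ range exactly over $PC^\+O_\+I$; that is, every partial computable $\varphi:\words\times\+I\to\+O$ equals $U_e$ for some $e\in\words$.

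Next I would reuse the injective total computable pairing map $c:\words\times\words\to\words$ of Proposition~\ref{p:code}, with $|c(e,x)|=2|e|+|x|+1$, applying it \emph{only} to the program part. I then define $V:\words\times\+I\to\+O$ by
$$
V(c(e,x),z)=U(e,(x,z))
$$
(with the convention that both sides are simultaneously defined or not) and leave $V$ undefined when its first argument lies outside the range of $c$. Because $U$ and $c$ are partial computable and $c$ is injective, $V$ is a well-defined partial computable function $\words\times\+I\to\+O$, hence $V\in PC^\+O_\+I$ and is a legitimate candidate for optimality. The verification of the inequality is then the same routine computation as before: fixing $\varphi\in PC^\+O_\+I$ and $e$ with $\varphi(x,z)=U(e,(x,z))$, every program $x$ witnessing $\varphi(x,z)=y$ yields the $V$-program $c(e,x)$ with $V(c(e,x),z)=y$ and length $|x|+2|e|+1$, so taking $x$ of least length gives $K_V(y\mid z)\leq K_\varphi(y\mid z)+2|e|+1$, uniformly in $y$ and $z$.

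The main thing to get right — rather than a genuine obstacle — is the bookkeeping around the parameter $z$: one must enumerate the two-argument maps $\words\times\+I\to\+O$ instead of the one-argument maps $\words\to\+O$, and recode the program while passing $z$ through identically on both sides of the defining equation for $V$. As with the unconditional theorem, no use of the parameter ($s^m_n$) theorem is needed; the enumeration theorem alone suffices, since the environment is supplied directly as a second input to $V$ rather than compiled into the program.
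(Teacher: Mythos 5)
Your proposal is correct and follows exactly the route the paper intends: its proof of this theorem is just the remark ``simple application of the enumeration theorem,'' and your argument is the paper's own proof of Theorem~\ref{thm:invariance} adapted in the expected way, enumerating two-argument partial computable functions via Theorem~\ref{thm:3thms} and recoding only the program coordinate with the pairing map of Proposition~\ref{p:code} while passing $z$ through untouched, yielding the same constant $2|e|+1$. Nothing is missing; you have simply written out the details the paper leaves implicit.
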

\begin{proof}
Simple application of the enumeration theorem
for partial computable functions.
\end{proof}
\begin{definition}
$K^\+O_\+I:\+O\times\+I\to\N$ is $K_V(\ |\ )$ where $V$
is some fixed optimal function.
\medskip\\
$K^\+O_\+I$ is defined up to an additive constant:
if $V$ et $V'$ are both minimum then
$$
\exists c\ \forall y\in\+O\ \forall z\in\+I\
|K_V(y\mid z) - K_{V'}(y\mid z)| \leq c
$$
\end{definition}
\noindent
Again, an integer defined up to a constant\ldots!
However, the constant is uniform in $y\in\+O$ and $z\in\+I$.
%
%
\subsubsection{Simple upper bounds for Kolmogorov complexity}
%
Finally, let's mention rather trivial upper bounds:
\\- the information content of a word is at most its length.
\\- conditional complexity cannot be harder than the non conditional
one.
\begin{proposition}\label{p:bound}
1. There exists $c$ such that,
$$
\forall x\in\words\ \ K^{\words}(x)\leq |x|+c
\ \ ,\ \
\forall n\in\N\ \ K^\N(n)\leq \log(n)+c
$$
2. There exists $c$ such that,
$$
\forall x\in D\ \forall y\in E\ \ \ K(x\mid y)\leq K(x)+c
$$
3. Let $f:\+O\to\+O'$ be computable.
Then, $K^{\+O'}(f(x)) \leq K^\+O(x) +O(1)$.
\end{proposition}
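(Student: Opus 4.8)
The plan is to obtain all three bounds as direct consequences of the invariance theorem (Theorem~\ref{thm:invariance}) and its conditional analogue. The common technique is to exhibit, in each case, an explicit partial computable function $\varphi$ whose associated complexity $K_\varphi$ manifestly meets the desired bound, and then to invoke optimality of the reference function $V$ (so that $K = K_V \leq K_\varphi + c$) to transfer the bound to $K$ at the cost of a single additive constant.

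For part~1, I would treat the two inequalities separately and then take the larger of the two constants. For the first, let $\varphi : \words \to \words$ be the identity, which is partial computable; since the only $p$ with $\varphi(p) = x$ is $p = x$, we get $K_\varphi(x) = |x|$, whence $K^{\words}(x) \leq |x| + c$ by invariance. For the second, let $\varphi : \words \to \N$ send a binary word to the integer it denotes; a shortest program for $n$ is its binary representation, of length $1 + \lfloor \log n \rfloor \leq \log n + 1$, so $K_\varphi(n) \leq \log n + 1$ and invariance gives $K^{\N}(n) \leq \log n + c$.

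For part~2, let $V : \words \to \+O$ be optimal for unconditional complexity and define $W : \words \times \+I \to \+O$ by $W(p,z) = V(p)$, simply discarding the free input $z$; this $W$ is partial computable. For every $y \in \+I$ one has $K_W(x \mid y) = K_V(x) = K(x)$, and the conditional invariance theorem then yields $K(x \mid y) \leq K_W(x \mid y) + c = K(x) + c$. For part~3, with $V : \words \to \+O$ optimal for $\+O$, consider the composite $f \circ V : \words \to \+{O'}$, which is partial computable because $f$ is computable and $V$ is partial computable. Any program $p$ with $V(p) = x$ satisfies $(f \circ V)(p) = f(x)$, so $K_{f \circ V}(f(x)) \leq K_V(x) = K^{\+O}(x)$; invariance for $\+{O'}$ then gives $K^{\+{O'}}(f(x)) \leq K^{\+O}(x) + O(1)$.

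There is no genuine obstacle in any of these: the whole weight has already been carried by the invariance theorem. The only points deserving a moment's care are checking that the auxiliary maps (the identity, binary decoding, the input-forgetting map $W$, and the composite $f \circ V$) really are partial computable, and---for part~1---noting that a single constant $c$ suffices by taking the maximum of the two constants produced for the two inequalities.
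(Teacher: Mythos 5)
Your proposal is correct and takes essentially the same approach as the paper: for part 1 the paper likewise uses the identity map plus the invariance theorem for the first inequality, and a binary-decoding map for the second (its $\theta$ is the dyadic bijection with $K^\N_\theta(n)=\lfloor\log(n+1)\rfloor$, a cosmetic variant of your $1+\lfloor\log n\rfloor$ bound). The paper explicitly proves only part 1, and your arguments for part 2 (the condition-forgetting map $W(p,z)=V(p)$ plus conditional invariance) and part 3 (the composite $f\circ V$ plus invariance) are precisely the standard arguments it leaves to the reader.
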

{\small\begin{quote}
\begin{proof}
We only prove 1.
Let $Id:\words\to\words$ be the identity function.
The invariance theorem insures that there exists $c$ such that
$K^{\words} \leq K^{\words}_{Id} + c$.
In particular,
for all $x\in\words$, $K^{\words}(x)\leq |x|+c$.
\\
Let $\theta:\words\to\N$ be the function which associate to a word
$u=a_{k-1}...a_0$ the integer
$$
\theta(u) = (2^k+a_{k-1}2^{k-1}+...+2a_1+a_0) -1
$$
(i.e. the predecessor of the integer with binary representation $1u$).
Clearly,
$K^\N_\theta(n)=\lfloor \log(n+1)\rfloor$.
The invariance theorem insures that there exists $c$ such that
$K^\N \leq K^\N_\theta + c$. Hence $K^\N(n)\leq \log(n)+c+1$
for all $n\in\N$.
\end{proof}
\end{quote}}
The following property is a variation of an argument
already used in \S\ref{sss:closer}: the rank of an element
in a set defines it, and if the set is computable,
so is this process.
\begin{proposition}\label{p:rank}
Let $A\subseteq \N\times D$ be computable such that
$A_n = A\cap(\{n\}\times D)$ is finite for all $n$.
Then, letting $\sharp(X)$ be the number of elements of $X$,
$$
\exists c\ \forall x\in A_n\ \ \
K(x\mid n) \leq \log(\sharp(A_n))+c
$$
Intuition. An element in a set is determined by its rank.
And this is a computable process.
\end{proposition}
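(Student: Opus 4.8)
The plan is to turn the stated intuition into a single partial computable function and then let the invariance theorem for conditional complexity do the rest. Concretely, I would exhibit one partial computable $\varphi:\words\times\N\to D$ which, given $n$ and a ``rank'' encoded as a binary word, outputs the corresponding element of $A_n$; the invariance theorem then bounds $K(\cdot\mid\cdot)=K_V$ by $K_\varphi+c$, and the length of a rank is at most $\log(\sharp(A_n))$.

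First I would fix a computable enumeration $d_0,d_1,d_2,\ldots$ of the elementary set $D$ (available by Definition \ref{def:elementary}). For each $n$ this induces a canonical listing of $A_n$: scan the $d_i$ in order, test the decidable predicate $(n,d_i)\in A$, and keep those that pass. The rank of $x\in A_n$ is its position $k$ in this listing, an integer with $0\leq k<\sharp(A_n)$. I then define $\varphi(p,n)$ by reading $p$ as the integer $k$ whose $2$-adic representation is $p$ (the bijection $\words\to\N$ recalled in \S\ref{sss:constantlength}), running the listing of $A_n$ until $k+1$ elements have been found, and returning the last of them. The point to check is that $\varphi$ is partial computable: since membership in $A$ is decidable each test halts, and whenever $k<\sharp(A_n)$ the search for the $(k+1)$-st element terminates after finitely many steps. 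For $x\in A_n$ of rank $k$, the word $p$ coding $k$ has length $\lfloor\log(k+1)\rfloor\leq\log(\sharp(A_n))$, so $K_\varphi(x\mid n)\leq\log(\sharp(A_n))$. Applying the invariance theorem for conditional complexity gives a constant $c$, depending only on $\varphi$ (hence only on $A$) and uniform in $x$ and $n$, with $K(x\mid n)\leq K_\varphi(x\mid n)+c\leq\log(\sharp(A_n))+c$, which is exactly the claim.

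The one genuinely delicate step is the computability of $\varphi$, and it is tempting to overshoot by trying to compute $\sharp(A_n)$ first. That would be a mistake: decidability of $A$ together with mere finiteness of each fiber does \emph{not} make $n\mapsto\sharp(A_n)$ computable, since nothing tells the enumeration when it has exhausted $A_n$. The design above sidesteps this obstacle precisely because the decoder is handed the rank $k$ and only ever needs to \emph{reach} the $k$-th element, never to certify that the listing is complete. This is where I expect the real care to be needed, and why the construction enumerates rather than counts.
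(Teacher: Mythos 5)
Your proof is correct and follows essentially the same route as the paper's (much terser) argument: encode $x\in A_n$ by its rank, decode by computably enumerating $A_n$ given $n$, and absorb the overhead into the invariance constant. Your explicit observation that the decoder must only \emph{reach} rank $k$ rather than compute $\sharp(A_n)$ (which need not be a computable function of $n$) is exactly the care the paper's two-line proof leaves implicit, so nothing is missing.
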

{\small\begin{quote}
\begin{proof}
Observe that $x$ is determined by its rank in $A_n$.
This rank is an integer $<\sharp A_n$ hence with binary
representation of length $\leq\lfloor\log(\sharp A_n)\rfloor+1$.
\end{proof}
\end{quote}}
%
%
\subsection{Oracular Kolmogorov complexity }
%
As is always the case in computability theory, everything
relativizes to any oracle $Z$. This means that the equation
given at the start of \S\ref{ss:invariance} now becomes
\medskip\\ \medskip\centerline{\em\begin{tabular}{rcl}
description &=& program of a partial $Z$-computable function
\end{tabular}}
and for each possible oracle $Z$ there exists a Kolmogorov
complexity relative to oracle $Z$.
\medskip\\
Oracles in computability theory can also be considered as
second-order arguments of computable or partial computable
{\em functionals}.
The same holds with oracular Kolmogorov complexity:
the oracle $Z$ can be seen as a second-order condition
for a {\em second-order conditional Kolmogorov complexity}
$$
K(y\mid Z)\hspace{3mm}\mbox{where}\hspace{3mm}
K(\ \mid\ ):\+O \times P(\+I) \to \N
$$
Which has the advantage that the unavoidable constant in the
``up to a constant" properties does not depend on the
particular oracle. It depends solely on the considered
functional.
\\
Finally, one can mix first-order and second-order conditions,
leading to a conditional Kolmogorov complexity with both
first-order and second-order conditions
$$
K(y\mid z, Z)\hspace{3mm}\mbox{where}\hspace{3mm}
K(\ \mid\ ,\ ):\+O \times \+I\times P(\+I) \to \N
$$
We shall see in \S\ref{sss:nies}
an interesting property involving oracular Kolmogorov complexity. 
%
%
%
%
%
\section{Kolmogorov complexity and undecidability}
%
\subsection{$K$ is unbounded}
Let $K=K_V : \+O\to\N$ where $V:\words\to\+O$ is optimal
(cf. Theorem \S\ref{thm:invariance}).
Since there are finitely many programs of size $\leq n$
(namely $2^{n+1}-1$ words),
there are finitely many elements of $\+O$ with Kolmogorov complexity
less than $n$.
This shows that $K$ is unbounded.
%
\subsection{$K$ is not computable}\label{ss:Knoncomput}
Berry' paradox (cf. \S\ref{sss:berry}) has a counterpart in terms
of Kolmogorov complexity, namely it gives a proof that $K$,
which is a total function $\+O\to\N$, is not computable.
{\small\begin{quote}
\begin{proof}
For simplicity of notations, we consider the case $\+O=\N$.
Define $L:\N\to\+O$ as follows:
\begin{eqnarray*}
L(n) &=& \mbox{least  $k$ such that $K(k)\geq 2n$}
\end{eqnarray*}
So that $K(L(n))\geq n$ for all $n$.
If $K$ were computable so would be $L$.
Let $V:\+O\to\N$ be optimal, i.e. $K=K_V$.
The invariance theorem insures that there exists $c$ such that
$K\leq K_L+c$. Observe that $K_L(L(n)\leq n$ by definition of
$K_L$.
Then
$$
2n \leq K(L(n)) \leq K_L(L(n)+c \leq n+c
$$
A contradiction for $n>c$.
\end{proof}
\end{quote}}
The undecidability of $K$ can be sen as a version of the
undecidability of the halting problem.
In fact, there is a simple way to compute $K$ when the halting
problem is used as an oracle.
To get the value of $K(x)$, proceed as follows:
\\\indent- enumerate the programs in $\words$ in lexicographic
          order,
\\\indent- for each program $p$ check if $V(p)$ halts
          (using the oracle),
\\\indent- in case $V(p)$ halts then compute its value,
\\\indent- halt and output $|p|$ when some $p$ is obtained
           such that $V(p)=x$.
\medskip\\
The argument for the undecidability of $K$ can be used to prove
a much stronger statement: $K$ can not be bounded from below by
an unbounded partial computable function.
\begin{theorem}[Kolmogorov]
There is no unbounded partial recursive function
$\varphi:\+O\to\N$ such that $\varphi(x)\leq K(x)$ for all $x$
in the domain of $\varphi$.
\end{theorem}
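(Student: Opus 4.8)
The plan is to argue by contradiction, adapting the very same Berry-paradox construction that proved the undecidability of $K$ in \S\ref{ss:Knoncomput}. Suppose toward a contradiction that there exists an unbounded partial computable function $\varphi:\+O\to\N$ with $\varphi(x)\leq K(x)$ for all $x$ in the domain of $\varphi$. Working for simplicity with $\+O=\N$, the idea is that such a $\varphi$ would let us \emph{effectively find} elements of arbitrarily large Kolmogorov complexity, which is exactly what the invariance theorem forbids.

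Concretely, I would define a partial computable function $L:\N\to\+O$ by setting
$$
L(n)=\mbox{the first $k$ (in a fixed computable search) such that $\varphi(k)\geq 2n$.}
$$
This search halts because $\varphi$ is unbounded (so some $k$ with $\varphi(k)\geq 2n$ exists) and $\varphi$ is partial computable (so we can dovetail its computations over all candidate $k$ and detect when one of them converges to a value $\geq 2n$). Since $\varphi(L(n))\leq K(L(n))$ by hypothesis, we immediately get $K(L(n))\geq 2n$ for every $n$. On the other hand, $L$ itself is a partial computable function $\N\to\+O$, so by the invariance theorem (Theorem~\ref{thm:invariance}) there is a constant $c$ with $K\leq K_L+c$; and by the very definition of $K_L$ we have $K_L(L(n))\leq |n|\leq \log(n)+1$ (taking $n$ as a program producing $L(n)$). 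Combining,
$$
2n\leq K(L(n))\leq K_L(L(n))+c\leq \log(n)+1+c,
$$
which is a contradiction for all large enough $n$.

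The step needing the most care is the dovetailing that makes $L$ genuinely computable: because $\varphi$ is only \emph{partial} computable, I cannot simply evaluate $\varphi(0),\varphi(1),\dots$ in turn, since some of these computations may diverge and block the search. Instead I must interleave the computations of $\varphi$ on all inputs, running input $k$ for $t$ steps across an expanding schedule in $(k,t)$, and output the first $k$ whose computation halts with a value $\geq 2n$. Unboundedness of $\varphi$ guarantees that at least one such $k$ is eventually discovered, so $L$ is total and partial computable. The remaining inequalities are the routine bookkeeping already rehearsed in \S\ref{ss:Knoncomput}, the only genuinely new ingredient being the replacement of the total computable $K$ there by the merely partial computable lower bound $\varphi$ here.
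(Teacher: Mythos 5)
Your proof is correct and is exactly the argument the paper intends: the paper gives no separate proof of this theorem, remarking only that ``the argument for the undecidability of $K$'' from \S2.2 adapts, and your construction of $L$ via dovetailed search for $k$ with $\varphi(k)\geq 2n$, followed by $2n\leq K(L(n))\leq K_L(L(n))+c\leq \log(n)+1+c$, is precisely that adaptation. The dovetailing step you single out is indeed the one genuinely new ingredient needed to handle a merely partial computable lower bound, and you handle it correctly.
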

Of course, $K$ is bounded from above by a total computable
function, cf. Proposition \ref{p:bound}.
%
\subsection{$K$ is computable from above}\label{ss:Kabove}
%
Though $K$ is not computable, it can be approximated from above.
The idea is simple. Suppose $\+O=\words$.
consider all programs of length less than $|x|$ and let them be
executed during $t$ steps.
If none of them converges and outputs $x$
then the $t$-bound is $|x|$.
If some of them converges and outputs $x$
then the bound is the length of the shortest such program.
\\
The limit of this process is $K(x)$, it is obtained at some
finite step which we are not able to bound.
\\
Formally, this means that there is some $F:\+O\times\N\to\N$
which is computable and decreasing in its second argument
such that
$$
K(x) = \lim_{n\to+\infty} F(x,n)
$$
%
%
\subsection{Kolmogorov complexity
            and G\"odel's incompleteness theorem}\label{ss:godel}
G\"odel's incompleteness' theorem has a striking version,
due to Chaitin, 1971-74 \cite{chaitin71, chaitin74},
in terms of Kolmogorov complexity.
In the language of arithmetic one can formalize partial
computability (this is G\"odel main technical ingredient
for the proof of the incompleteness theorem) hence also
Kolmogorov complexity.
\\
Chaitin proved an $n$ lower bound to the information content
of finite families of statements about finite restrictions
associated to an integer $n$ of the halting problem
or the values of $K$.
\\
In particular, for any formal system $\+T$, if $n$ is bigger
than the Kolmogorov complexity of $\+T$
(plus some constant, independent of $\+T$)
such statements cannot all be provable in $\+T$
\begin{theorem}[Chaitin, 1974 \cite{chaitin74}]
Suppose $\+O=\words$.
\\
1. Let $V:\words\to\+O$ be optimal (i.e. $K=K_V$).
Let $\+T_n$ be the family of true statements
$\exists p\ (V(p)=x)$ for $|x|\leq n$
(i.e. the halting problem for $V$ limited to the finitely
many words of length $\leq n$).
Then there exists a constant $c$ such that
$K(\+T_n) \geq n-c$ for all $n$.
\medskip\\
2. Let $\+T_n$ be the family of true statements
$K(x)\geq|x|)$ for $|x|\leq n$.
Then there exists a constant $c$ such that
$K(\+T_n) \geq n-c$ for all $n$.
\end{theorem}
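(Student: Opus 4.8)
The plan is to run Berry's paradox (\S\ref{sss:berry}) in reverse. The common template for both parts is this: exhibit a single, fixed algorithm that, fed the finite datum $\+T_n$, outputs a word $w$ with $K(w)\ge n$ (up to $\pm 1$). Since $w$ is then a computable function of $\+T_n$, Proposition~\ref{p:bound} (part 3) gives $K(w)\le K(\+T_n)+O(1)$, and combining $n-O(1)\le K(w)\le K(\+T_n)+O(1)$ yields $K(\+T_n)\ge n-c$. Because one and the same extracting algorithm serves every $n$, the additive constant is uniform in $n$, as required.

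For part~2 I would, from $\+T_n$, first recover $n$ and then scan the words $x$ of length exactly $n$, keeping the first one whose (true) statement $K(x)\ge|x|$ is listed in $\+T_n$; call it $w$. Such a $w$ exists by the counting already used to show $K$ unbounded: there are only $2^n-1$ programs of length $<n$ against $2^n$ words of length $n$, so at least one length-$n$ word $x$ has no shorter program, i.e. $K(x)\ge n=|x|$, and its statement is therefore true and present in $\+T_n$. The map $\+T_n\mapsto w$ is computable and $K(w)\ge n$, so the template gives $K(\+T_n)\ge n-c$.

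For part~1 I read $\+T_n$ as the halting information for $V$ on the programs of length $\le n$, following the parenthetical gloss of the statement (with $V$ optimal, hence onto, the bare predicate $\exists p\,(V(p)=x)$ is always true, so it is this halting datum that carries content). From $\+T_n$ I computably form the finite set $R_n=\{V(p):|p|\le n,\ V(p)\!\downarrow\}$ of all outputs of programs of length $\le n$: I simply run each program that $\+T_n$ declares convergent and collect its value, a terminating computation precisely because convergence is already recorded. Letting $w$ be the first word not in the finite set $R_n$, no program of length $\le n$ outputs $w$, so $K(w)\ge n+1$; again $w$ is computable from $\+T_n$ and the template gives $K(\+T_n)\ge n-c$.

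The delicate point is the \emph{computability} of the extraction $\+T_n\mapsto w$, and this is where part~1 differs from part~2: one must be sure $\+T_n$ carries enough of the halting behaviour of $V$ to reconstruct $R_n$, the key being that running the convergent programs terminates exactly because their halting is part of the datum. Once the extraction is seen to be a fixed computable map, the existence of the high-complexity witness is pure counting and the conclusion is immediate from Proposition~\ref{p:bound}. Conceptually the whole argument is Berry's paradox made consistent: were $\+T_n$ compressible below $n-c$ bits, we could describe a word of complexity exceeding $n$ in fewer than $n$ bits, which is impossible.
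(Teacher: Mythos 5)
Your proof is correct. The paper states Chaitin's theorem without any proof, so there is no in-paper argument to compare with; what you give is the standard diagonalization and clearly the intended one: a single fixed computable extraction $\+T_n \mapsto w$ with $K(w) > n$, combined with point~3 of Proposition~\ref{p:bound}, which is also exactly what makes the constant uniform in $n$. Two details of your write-up deserve explicit approval: your repair of the literal reading of part~1 is the right one --- an optimal $V$ is necessarily surjective (every $y$ has $K_V(y) \leq |y| + O(1) < \infty$), so the bare statements $\exists p\,(V(p)=x)$ are all trivially true and only the halting-problem gloss makes the theorem non-vacuous, with your reconstruction of $R_n=\{V(p) : |p|\leq n,\ V(p)\ \text{halts}\}$ terminating precisely because convergence is part of the datum; and in part~2 the recovery of $n$ as the maximal $|x|$ occurring in $\+T_n$ is legitimate because the same counting shows an incompressible word exists at \emph{every} length $m\leq n$ --- a one-line remark worth adding.
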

\noindent
Note. In the statement of the theorem, $K(x)$ refers to
the Kolmogorov complexity on $\+O$ whereas $K(\+T_n)$
refers to that on an adequate elementary family
(cf. Definition \ref{def:elementary}).
%
%
%
%
\section{Formalization of randomness for finite objects}
%
%
\subsection{Probabilities: laws about a non formalized intuition}
%
Random objects {\em(words, integers, reals,...)}
constitute the basic intuition for probabilities
{\em... but they are not considered per se.}
No formal definition of random object is given:
there seems there is no need for such a formal concept.
The existing formal notion of {\em random variable} has nothing
to do with randomness: a random variable is merely a
{\em measurable function} which can be as non random as one likes.
\medskip\\
It sounds strange that the mathematical theory which deals with
randomness removes the natural basic questions:
\\\indent- {\em what is a random string?}
\\\indent- {\em what is a random infinite sequence?}
\\
When questioned, people in probability theory agree that they
skip these questions but do not feel sorry about it.
As it is, the theory deals with laws of randomness and
is so successful that it can do without entering this problem.
\medskip\\
This may seem to be analogous to what is the case in geometry.
What are points, lines, planes?
No definition is given, only relations between them.
Giving up the quest for an analysis of the nature of geometrical
objects  in profit of the axiomatic method
has been a considerable scientific step.
\\
However, we contest such an analogy.
Random objects are heavily used in many areas of science
and technology: sampling, cryptology,...
Of course, such objects are are in fact
{\em ``as much as we can random"}.
Which means {\em fake randomness}.
\begin{quote}{\em
Anyone who considers arithmetical methods of producing random reals
is, of course, in a state of sin.
For, as has been pointed out several times, there is no such thing
as a random number --- there are only methods to produce random
numbers, and a strict arithmetical procedure is of course not such
a method.}

\hfill{ John von Neumann, 1951 \cite{neumannsin}}
\end{quote}
So, what is ``true" randomness?
Is there something like a degree of randomness?
Presently, (fake) randomness only means to pass
some statistical tests.
One can ask for more.
\\
In fact, since Pierre Simon de Laplace (1749--1827),
some probabilists never gave up the idea of formalizing
the notion of random object.
Let's cite particularly Richard von Mises (1883--1953)
and Kolmogorov.
In fact, it is quite impressive that, having so brilliantly
and efficiently axiomatized probability theory via measure theory
in 1933 \cite{kolmo33}, Kolmogorov was not fully satisfied
of such foundations.\footnote{
Kolmogorov is one of the rare probabilists -- up to now --
not to believe that Kolmogorov's axioms for probability theory
do not constitute the last word about formalizing randomness...}
And kept a keen interest to the quest for a formal
notion of randomness initiated by von Mises in the 20's.
%
%
\subsection{The 100 heads paradoxical result in probability theory}
%
That probability theory fails to completely account for randomness
is strongly witnessed by the following paradoxical fact.
In probability theory,
{\em if we toss an unbiaised coin 100 times then
       100 heads are just as probable as any other outcome!}
Who really believes that ?
{\em\begin{quote}
The axioms of probability theory, as developped by Kolmogorov,
do not solve all mysteries that they are sometimes supposed to.

\hfill{Peter G\`acs \cite{gacs93}}
\end{quote}}

%
%
\subsection{Kolmogorov's proposal: incompressible strings}
%
We now assume that $\+O=\words$, i.e. we restrict to words.
%
\subsubsection{incompressibility with Kolmogorov complexity}
Though much work has been devoted to get
{\em a mathematical theory of random objects},
notably by von Mises \cite{mises19,mises39},
none was satisfactory up to the 60's when Kolmogorov
based such a theory on Kolmogorov complexity,
hence on computability theory.
\\
The theory was, in fact, independently developed by
Gregory J. Chaitin (b. 1947), 1966
\cite{chaitin66}, 1969 \cite{chaitin69}
(both papers submitted in 1965).\footnote{
For a detailed analysis of {\em who did what, and when},
see \cite{livitanyi} p.89--92.}
\medskip\\
The basic idea is as follows:
{\em larger is the Kolmogorov complexity of a text,
more random is this text,
larger is its information content,
and more compressed is this text.}
\\
Thus, a theory for measuring the information content
is also a theory of randomness.
%
\medskip\\
Recall that there exists $c$ such that for all $x\in\words$,
$K(x)\leq |x|+c$ (Proposition \ref{p:bound}).
Also, there is a ``stupid" program of length about $|x|$
which computes the word $x$ :
tell the successive letters of $x$.
The intuition of incompressibility is as follows:
$x$ is incompressible if ther no shorter way to get $x$.
\\
Of course, we are not going to define absolute randomness
for words. But a measure of randomness based on how far from
$|x|$ is $K(x)$.
\begin{definition}[Measure of incompressibility]$\\ $
A word $x$ is $c$-incompressible if $K(x)\geq|x|-c$.
\end{definition}
As is rather intuitive, most things are random.
The next Proposition formalizes this idea.
\begin{proposition}
The proportion of $c$-incompressible strings of length $n$
is $\geq 1-2^{-c}$.
\end{proposition}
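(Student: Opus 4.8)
The plan is to do a straightforward counting argument. The statement asserts that, among the strings of length $n$, the fraction that are $c$-incompressible (i.e.\ satisfy $K(x) \geq |x| - c = n - c$) is at least $1 - 2^{-c}$. Equivalently, I want to bound from above the number of length-$n$ strings that are \emph{compressible} beyond the threshold, namely those $x$ with $K(x) < n - c$, and show this is at most $2^{-c}$ of all $2^n$ strings.

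First I would count the programs that could possibly witness such compressibility. If $K(x) < n - c$, then by definition of $K = K_V$ there is a program $p$ with $V(p) = x$ and $|p| < n - c$, i.e.\ $|p| \leq n - c - 1$. The number of binary words of length $\leq n-c-1$ is $\sum_{k=0}^{n-c-1} 2^k = 2^{n-c} - 1$. Since $V$ is a function, each such program $p$ yields at most one output $x = V(p)$, so the map $p \mapsto V(p)$ shows that the number of strings $x$ (of any length) with $K(x) < n-c$ is at most $2^{n-c} - 1 < 2^{n-c}$. In particular, the number of length-$n$ strings that are \emph{not} $c$-incompressible is strictly less than $2^{n-c}$.

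The conclusion is then immediate: the number of $c$-incompressible strings of length $n$ is at least $2^n - 2^{n-c}$, so their proportion among all $2^n$ strings of length $n$ is at least
$$
\frac{2^n - 2^{n-c}}{2^n} = 1 - 2^{-c},
$$
as claimed.

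There is no serious obstacle here; the only point requiring a little care is the bookkeeping on program lengths, namely getting the geometric sum $\sum_{k=0}^{n-c-1} 2^k = 2^{n-c}-1$ right and noting the strict-versus-weak inequality in $K(x) < n-c$ so that one genuinely lands below the $2^{n-c}$ bound. The crucial structural fact being used is simply that $V$ is a (partial) function, so short programs cannot produce more outputs than there are short programs; this is the same pigeonhole-flavored idea underlying Berry's paradox, now deployed as a clean counting bound rather than a paradox.
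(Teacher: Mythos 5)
Your proof is correct and uses exactly the same counting argument as the paper: there are at most $2^{n-c}-1$ programs of length $<n-c$, hence (since $V$ is a function) at most that many compressible strings, against $2^n$ strings of length $n$. Your write-up simply spells out the bookkeeping that the paper's one-line proof leaves implicit.
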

{\small\begin{quote}
\begin{proof}
At most $2^{n-c}-1$ programs of length $<n-c$
and $2^n$ strings of length $n$.
\end{proof}
\end{quote}}
%
%
\subsubsection{incompressibility with length conditional
Kolmogorov complexity}
%
We observed in \S\ref{sss:entropy} that the entropy of a word
of the form $000...0$ is null.
I.e. entropy did not considered the information conveyed by
the length.
\\
Here, with incompressibility based on Kolmogorov complexity,
we can also ignore the information content conveyed by the length
by considering {\em incompressibility based on length conditional
Kolmogorov complexity}.
\begin{definition}[Measure of length conditional incompressibility]
A word $x$ is length conditional $c$-incompressible if
$K(x\mid |x|)\geq|x|-c$.
\end{definition}
The same simple counting argument yields the following Proposition.
\begin{proposition}
The proportion of length conditional $c$-incompressible
strings of length $n$ is $\geq 1-2^{-c}$.
\end{proposition}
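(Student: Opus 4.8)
The plan is to repeat essentially verbatim the counting argument used for the unconditional incompressibility proposition, observing that conditioning on the length introduces no extra difficulty. First I would unfold the definition: a length $n$ word $x$ fails to be length conditional $c$-incompressible exactly when $K(x\mid |x|)<|x|-c$, that is, since $|x|=n$, when $K(x\mid n)<n-c$. Fixing an optimal $V\in PC^{\words}_{\N}$ with $K(\cdot\mid\cdot)=K_V(\cdot\mid\cdot)$, this inequality means there is a program $p$ with $|p|<n-c$ and $V(p,n)=x$.

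The crucial observation is that, for words of a fixed length $n$, the conditioning input is one and the same value $n$ for all of them. Hence the ``bad'' (compressible) strings of length $n$ are all of the form $V(p,n)$ for some program $p$ with $|p|<n-c$; that is, they lie in the image of the single function $p\mapsto V(p,n)$ restricted to short programs. Since $V(\cdot,n)$ is a function, each such $p$ yields at most one output, so the number of bad strings of length $n$ is at most the number of programs of length $<n-c$, namely $\sum_{i=0}^{n-c-1}2^i=2^{n-c}-1$.

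Finally I would divide by the total number $2^n$ of length $n$ strings. The proportion of bad strings is at most $(2^{n-c}-1)/2^n<2^{-c}$, so the proportion of length conditional $c$-incompressible strings is at least $1-2^{-c}$, as claimed. There is no genuine obstacle here: the only point to verify is that fixing the conditional argument to the common value $n$ leaves the program count identical to the unconditional case, which is precisely why ``the same simple counting argument'' applies.
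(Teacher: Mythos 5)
Your proof is correct and follows exactly the paper's route: the paper itself just remarks that ``the same simple counting argument'' as in the unconditional case applies, and your careful unfolding --- fixing the conditional argument to the common value $n$ and counting the at most $2^{n-c}-1$ programs of length $<n-c$ against the $2^n$ strings of length $n$ --- is precisely that argument spelled out.
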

A priori length conditional incompressibility is stronger
than mere incompressibility.
However, the two notions of incompressibility are about the
same \ldots up to a constant.
\begin{proposition}
There exists $d$ such that, for all $c\in\N$ and $x\in\words$
\medskip\\
1. $x$ is length conditional $c$-incompressible
$\Rightarrow$ $x$ is $(c+d)$-incompressible
\medskip\\
2. $x$ is $c$-incompressible $\Rightarrow$ $x$ is length conditional
$(2c+d)$-incompressible.
\end{proposition}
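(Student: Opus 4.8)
The plan is to obtain both implications from the interplay between $K$ and its length-conditional version, using the trivial bound of Proposition~\ref{p:bound}.2 for the easy direction and an explicit self-delimiting encoding for the hard one. Throughout I write $n=|x|$, and I recall that $x$ is $c$-incompressible means $K(x)\geq n-c$, while $x$ is length conditional $c$-incompressible means $K(x\mid n)\geq n-c$. The constant $d$ of the statement will be taken as the maximum of the two constants produced below.

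Part~1 is immediate. By Proposition~\ref{p:bound}.2 there is a constant $d$ with $K(x\mid n)\leq K(x)+d$ for all $x$. If $K(x\mid n)\geq n-c$, then $n-c\leq K(x\mid n)\leq K(x)+d$, whence $K(x)\geq n-(c+d)$, which is exactly $(c+d)$-incompressibility.

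For Part~2 the first step is to prove, for \emph{every} $x$, an inequality of the shape
$$
K(x)\ \leq\ K(x\mid n)+2\log\bigl(n-K(x\mid n)\bigr)+O(1).
$$
To get it I would fix a shortest program $p$ with $V(p,n)=x$, so $|p|=K(x\mid n)$, and build an \emph{unconditional} description of $x$. The decoder needs both $p$ and $n$; but once $p$ is recovered, its length $|p|=K(x\mid n)$ is already known, so it suffices to transmit in addition the \emph{gap} $t=n-K(x\mid n)$ rather than $n$ itself. Concretely I would take the pair $\langle\beta(t),p\rangle$ provided by Proposition~\ref{p:code}, where $\beta(t)$ is the binary representation of $t$; its length is $|p|+2\log t+O(1)$. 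A partial computable function reads this pair, recovers $p$ and $t$, reconstitutes $n=|p|+t$, and outputs $V(p,n)=x$, so the invariance theorem yields the displayed bound. Encoding the gap, and not $n$, is what makes the logarithmic term depend on $t$ and not on the (possibly huge) length $n$; this is the conceptual crux of the argument.

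The final step combines this with incompressibility. Assuming $x$ is $c$-incompressible, $K(x)\geq n-c$, and with $t=n-K(x\mid n)$ the displayed inequality becomes $n-c\leq(n-t)+2\log t+O(1)$, that is, $t-2\log t\leq c+a$ for an absolute constant $a$. It then remains to check that this forces $t\leq 2c+d$ for some absolute constant $d$, which is precisely the desired $K(x\mid n)\geq n-(2c+d)$. I expect this elementary estimate to be the only delicate point, and a short case analysis settles it: if $t\leq 2c+2a+10$ we are done, whereas if $t>2c+2a+10$ then $c+a<t/2$, so $t-2\log t<t/2$ gives $t<4\log t$, which holds only for $t$ below a fixed absolute bound $t_0$. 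Hence in all cases $t\leq\max(2c+2a+10,\,t_0)\leq 2c+d$. Taking $d$ to be the larger of the constants from Part~1 and Part~2 completes the proof.
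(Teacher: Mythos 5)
Your proposal is correct and takes essentially the same route as the paper: the paper likewise proves the key inequality $K(x)\le K(x\mid |x|)+2\log\bigl(|x|-K(x\mid |x|)\bigr)+O(1)$ by pairing a shortest conditional program $p$ with a short self-delimiting encoding of the gap $t=|x|-K(x\mid |x|)$ (it transmits a shortest program for $t$ and bounds $K(t)\le\log t+O(1)$, where you transmit $\beta(t)$ directly, which amounts to the same thing), recovers $|x|=|p|+t$, and closes with the same elementary observation that $t\le 2\log t+k$ forces $t\le\max(O(1),2k)$.
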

{\small\begin{quote}
\begin{proof}
1 is trivial.
For 2, observe that there exists $e$ such that, for all $x$,
$$
(*)\ \ \ \ K(x) \leq K(x\mid |x|) + 2 K(|x| - K(x\mid |x|)) + d
$$
In fact, if $K=K_\varphi$ and $K(\ \mid\ )=K_{\psi(\ \mid\ )}$
and
\medskip\\
$\begin{array}{rclcrcl}
|x| - K(x\mid |x|) &=& \varphi(p)
&& \psi(q\mid|x|) &=& x
\\
K(|x| - K(x\mid |x|)) &=& |p|
&& K(x\mid |x|) &=& |q|
\end{array}$
\medskip\\
With $p$ and $q$, hence with $\langle p,q\rangle$
(cf. Proposition \ref{p:code}), one can successively get
$\left\{\begin{array}{ll}
|x| - K(x\mid |x|) & \mbox{this is $\varphi(p)$}
\\
K(x\mid |x|) & \mbox{this is $q$}
\\
|x| & \mbox{just sum}
\\
x & \mbox{this is $\psi(q\mid|x|)$}
\end{array}\right.$
\\
Using $K\leq\log+c_1$ and $K(x)\geq|x|-c$, (*) yields
$$
|x|- K(x\mid |x|) \leq 2\log(|x| - K(x\mid |x|)) + 2c_1+c+d
$$
Finally, observe that $z\leq 2\log z +k$ insures $z\leq\max(8,2k)$.
\end{proof}
\end{quote}}
%
%
\subsection{Incompressibility is randomness: Martin-L\"{o}f's argument}
\label{ss:tests}
%
Now, if incompressibility is clearly a necessary condition for
randomness, how do we argue that it is a sufficient condition?
Contraposing the wanted implication, let's see that if a word
fails some statistical test then it is not incompressible.
We consider some spectacular failures of statistical tests.
\begin{example}\label{ex:ex1}
1. {\em [Constant left half length prefix]}
For all $n$ large enough, a string $0^nu$
with $|u|=n$ cannot be $c$-incompressible.
\medskip\\
2. {\em [Palindromes]}
Large enough palindromes cannot be
$c$-incompressible.
\medskip\\
3. {\em [$0$ and $1$ not equidistributed]}
For all $0<\alpha<1$, for all $n$ large enough,
a string of length $n$ which has $\leq \alpha\frac{n}{2}$ zeros
cannot be $c$-incompressible.
\end{example}
{\small\begin{quote}
\begin{proof}
1. Let $c'$ be such that $K(x)\leq|x|+c'$.
Observe that there exists $c''$ such that $K(0^nu) \leq K(u)+c''$
hence
$$
K(0^nu)\leq n+c'+c'' \leq \frac{1}{2}|0^nu|+c'+c''
$$
So that $K(0^nu)\geq|0^nu|-c$ is impossible for $n$ large enough.
\medskip\\
2. Same argument:
There exists $c''$ such that, for all palindrome $x$,
$$
K(x)\leq \frac{1}{2}|x| + c''
$$
\medskip\\
3. The proof follows the classical argument to get the law
of large numbers (cf. Feller's book \cite{feller}).
Let's do it for $\alpha=\frac{2}{3}$, so that
$\frac{\alpha}{2}=\frac{1}{3}$.
\medskip\\
Let $A_n$ be the set of strings of length $n$ with
$\leq \frac{n}{3}$ zeros.
We estimate the number $N$ of elements of $A_n$.
$$
N=\sum_{i=0}^{i=\frac{n}{3}} \binom{n}{i}
=\sum_{i=0}^{i=\frac{n}{3}} \frac{n!}{i!\ (n-i)!}
\leq (\frac{n}{3}+1)\ \frac{n!}{\frac{n}{3}!\ \frac{2n}{3}!}
$$
Use Stirling's formula (1730)
$$
\sqrt{2n\pi}\ {\left(\frac{n}{e}\right)}^n\ e^{\frac{1}{12n+1}}
< n!
< \sqrt{2n\pi}\ {\left(\frac{n}{e}\right)}^n\ e^{\frac{1}{12n}}
$$
$$
N < n \frac{\sqrt{2n\pi}\ {\left(\frac{n}{e}\right)}^n}
{\sqrt{2\frac{n}{3}\pi}\
{\left(\frac{\frac{n}{3}}{e}\right)}^{\frac{n}{3}}\
\sqrt{2\frac{2n}{3}\pi}\
{\left(\frac{\frac{2n}{3}}{e}\right)}^{\frac{2n}{3}}}\
= \frac{3}{2}\
  \sqrt{\frac{n}{\pi}}\ {\left(\frac{3}{\sqrt[3]{4}}\right)}^n
$$
Using Proposition \ref{p:rank}, for any element of $A_n$, we have
$$
K(x\mid n)
\leq \log(N) +d
\leq n\log\left(\frac{3}{\sqrt[3]{4}}\right) + \frac{\log n}{2} + d
$$
Since $\frac{27}{4} < 8$, we have $\frac{3}{\sqrt[3]{4}} < 2$
and $\log\left(\frac{3}{\sqrt[3]{4}}\right) < 1$.
Hence,
$n-c \leq n\log\left(\frac{3}{\sqrt[3]{4}}\right)+\frac{\log n}{2}+d$
is impossible for $n$ large enough.
\\
So that $x$ cannot be $c$-incompressible.
\end{proof}
\end{quote}}
Let's give a common framework to the three above examples
so as to get some flavor of what can be a statistical test.
To do this, we follow the above proofs of compressibility.
\begin{example}\label{ex:ex2}
1. {\em [Constant left half length prefix]}\\
Set $V_m = \mbox{ all strings with $m$ zeros ahead}$.
The sequence $V_0,V_1,...$ is decreasing.
The number of strings of length $n$ in $V_m$ is $0$ if $m>n$
and $2^{n-m}$ if $m\leq n$.
Thus, the proportion
$\frac{\sharp\{x \mid |x|=n\ \wedge\ x\in V_m\}}{2^n}$
of length $n$ words which are in $V_m$ is $2^{-m}$.
\medskip\\
2. {\em [Palindromes]}
Put in $V_m$ all strings which have equal length $m$
prefix and suffix.
The sequence $V_0,V_1,...$ is decreasing.
The number of strings of length $n$ in $V_m$ is
$0$ if $m>\frac{n}{2}$
and $2^{n-2m}$ if $m\leq \frac{n}{2}$.
Thus, the proportion of length $n$ words which are in $V_m$
is $2^{-2m}$.
\medskip\\
3. {\em [$0$ and $1$ not equidistributed]}
Put in $V^\alpha_m =$ all strings $x$ such that
the number of zeros is
$\leq (\alpha+(1-\alpha)2^{-m})\frac{|x|}{2}$.
The sequence $V_0,V_1,...$ is decreasing.
A computation analogous to that done in the proof of the law
of large numbers shows that the proportion of length $n$ words
which are in $V_m$ is $\leq 2^{-\gamma m}$ for some $\gamma>0$
(independent of $m$).
\end{example}
Now, what about other statistical tests?
But what is a statistical test?
A convincing formalization has been developed by Martin-L\"of.
The intuition is that illustrated in Example \ref{ex:ex2}
augmented of the following feature:
each $V_m$ is computably enumerable and so is the relation
$\{(m,x) \mid x\in V_m\}$.
A feature which is analogous to the partial computability
assumption in the definition of Kolmogorov complexity.
\begin{definition}\label{def:test}
[Abstract notion of statistical test, Martin-L\"of, 1964]
A statistical test is a family of nested critical regions
$$
\{0,1\}^*\supseteq V_0\supseteq V_1\supseteq V_2\supseteq
...\supseteq V_m\supseteq...
$$
such that $\{(m,x) \mid x\in V_m\}$ is computably enumerable
and the proportion
$\frac{\sharp\{x \mid |x|=n\ \wedge\ x\in V_m\}}{2^n}$
of length $n$ words which are in $V_m$ is $2^{-m}$.
\medskip\\
Intuition. The bound $2^{-m}$ is just a normalization.
Any bound $b(n)$ such that $b:\N\to\Q$ which is computable, 
decreasing and with limit $0$ could replace $2^{-m}$. 
\\
The significance of $x\in V_m$ is that the hypothesis
{\em $x$ is random} is rejected with significance level $2^{-m}$.
\end{definition}
\begin{remark}
Instead of sets $V_m$ one can consider a function
$\delta:\words\to\N$ such that
$\frac{\sharp\{x \mid |x|=n\ \wedge\ x\in V_m\}}{2^n}\leq 2^{-m}$
and $\delta$ is computable from below, i.e.
$\{(m,x) \mid \delta(x)\geq m\}$ is recursively enumerable.
\end{remark}
We have just argued on some examples that all statistical tests
from practice are of the form stated by Definition \ref{def:test}.
Now comes Martin-L\"of fundamental result about statistical tests
which is in the vein of the invariance theorem.
\begin{theorem} [Martin-L\"of, 1965]\label{thm:test}
Up to a constant shift, there exists a largest statistical test
$(U_m)_{m\in\N}$
$$
\forall (V_m)_{m\in\N}\ \exists c\ \forall m\ \
V_{m+c} \subseteq U_m
$$
In terms of functions, up to an additive constant,
there exists a largest statistical test $\Delta$
$$
\forall \delta\ \exists c\ \forall x\ \
\delta(x)<\Delta(x)+c
$$
\end{theorem}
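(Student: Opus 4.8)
The plan is to mimic the proof of the invariance theorem (Theorem \ref{thm:invariance}): enumerate all candidate statistical tests, correct each candidate into a genuine test in a uniform effective way, and then amalgamate them by a weighted union chosen so that the individual proportion bounds add up to a convergent geometric series. First I would invoke the enumeration theorem (Theorem \ref{thm:3thms}) to fix an effective enumeration $(R_e)_{e\in\N}$ of all computably enumerable subsets of $\N\times\words$; a candidate test is then read off $R_e$ by declaring $x\in V^e_m \iff (m,x)\in R_e$. The obstruction is that we cannot decide whether a given $R_e$ actually satisfies the two defining conditions, namely nesting and the $2^{-m}$ proportion bound.

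So the key step is a \emph{trimming} procedure that converts each $R_e$ into a genuine test $(\widetilde V^e_m)_m$, uniformly in $e$, and in such a way that $\widetilde V^e=V^e$ whenever $R_e$ already was a test. I would enumerate the pairs of $R_e$ one at a time, attaching to each $x$ a current level $\ell(x)$ (initially $0$) and maintaining the invariant that $x\in\widetilde V^e_j$ iff $j\le\ell(x)$; this makes $\widetilde V^e$ nested by construction. When a pair $(m,x)$ with $m>\ell(x)$ is enumerated, I raise $\ell(x)$ one step at a time toward $m$, performing a step from $j-1$ to $j$ only if, after adding $x$, the count $\sharp\{y : |y|=|x|\ \wedge\ y\in\widetilde V^e_j\}$ stays $\le 2^{|x|-j}$, and stopping at the first forbidden level. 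This keeps every proportion bound satisfied at every stage, so $\widetilde V^e$ is always a genuine test. Conversely, suppose $R_e$ really is a test. When we process $(m,x)$, nesting gives $(j,x)\in R_e$ for all $j\le m$, and an easy induction shows $\widetilde V^e_j\subseteq V_j$ throughout; since $x\in V_j\setminus\widetilde V^e_j$ at that moment, the current count is $\le 2^{|x|-j}-1$, so every step is allowed and no trimming ever occurs, whence $\widetilde V^e=V^e$.

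Finally I would set $U_m=\bigcup_{e\in\N}\widetilde V^e_{m+e+1}$. This is computably enumerable, because the trimming is effective and uniform in $e$, and it is nested because each $\widetilde V^e$ is. For the proportion, summing the individual bounds yields
\[
\sharp\{x : |x|=n\ \wedge\ x\in U_m\}\ \le\ \sum_{e\in\N}2^{\,n-(m+e+1)}\ =\ 2^{\,n-m}\sum_{e\ge 0}2^{-(e+1)}\ =\ 2^{\,n-m},
\]
so the proportion of length-$n$ words in $U_m$ is $\le 2^{-m}$ and $(U_m)_m$ is a statistical test. Universality is then immediate: an arbitrary test $(V_m)_m$ equals $\widetilde V^{e_0}$ for some index $e_0$ by the reproduction property, hence $V_{m+e_0+1}=\widetilde V^{e_0}_{m+e_0+1}\subseteq U_m$ for every $m$, which is the claim with $c=e_0+1$. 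The function formulation follows by putting $\Delta(x)=\max\{m : x\in U_m\}$ and rereading the inclusion $V_{m+c}\subseteq U_m$ as $\delta(x)<\Delta(x)+c$.

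The main obstacle is exactly the trimming step: one must verify that enforcing the proportion bound online never damages a family that was already legitimate. This is where the observation is essential that, at the instant $x$ is inserted into level $j$, the running count of length-$|x|$ words already in $\widetilde V^e_j$ is strictly below its eventual value $\sharp\{y:|y|=|x|\ \wedge\ y\in V_j\}\le 2^{|x|-j}$, so the insertion is always permitted for a genuine test. Everything else is bookkeeping analogous to the invariance theorem, the shift $e+1$ playing here the role that the program length $2|e|+1$ played there.
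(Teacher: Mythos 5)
Your proof is correct, but it takes a genuinely different route from the paper: you have reconstructed the classical enumeration-and-amalgamation argument for a universal test, whereas the paper takes a Kolmogorov-complexity shortcut. The paper simply \emph{defines} $\Delta(x)=|x|-K(x\mid|x|)-1$ and checks the two properties directly: the counting bound holds because $\Delta(x)\geq m$ means $K(x\mid|x|)\leq|x|-m-1$ and there are at most $2^{n-m}-1$ programs of that length; largeness holds by a coding argument --- given $|x|$, any $x$ is recoverable from its rank in $V_{\delta(x)}=\{z \mid \delta(z)\geq\delta(x)\ \wedge\ |z|=|x|\}$, padded with zeros to length exactly $|x|-\delta(x)$, whence $K(x\mid|x|)\leq|x|-\delta(x)+c$. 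Your route needs no Kolmogorov complexity at all, only the enumeration theorem: enumerate all c.e.\ candidates $R_e$, trim each online into a genuine test $\widetilde V^e$, and set $U_m=\bigcup_e\widetilde V^e_{m+e+1}$ so that the geometric series gives the $2^{-m}$ bound. Your trimming verification is sound, and the crucial observation --- that at insertion time the running count at level $j$ is strictly below $\sharp\{y \mid |y|=|x|\ \wedge\ y\in V_j\}\leq 2^{|x|-j}$, so a legitimate test is never damaged --- is exactly right; two benign wrinkles are worth noting: with $\ell(x)$ initialized to $0$ you get $\widetilde V^e_0=\words$, so reproduction holds only at levels $j\geq1$ (harmless, since your shift is $c=e_0+1\geq1$), and your translation to the function version yields $\delta(x)\leq\Delta(x)+c$ rather than the strict inequality, fixable by bumping $c$ by one. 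As for what each approach buys: yours is more elementary and is the construction that generalizes verbatim to the infinite-sequence setting of the paper's next section (the largest constructively null $G_\delta$ set), while the paper's construction makes the ensuing corollary immediate --- its universal test \emph{is} the incompressibility deficiency, so ``incompressible strings pass all tests'' falls out at once, whereas with your $U$ that corollary would still require, as an extra step, precisely the comparison between $K(x\mid|x|)$ and an arbitrary $\delta$ that the paper performs inside its proof.
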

{\small\begin{quote}
\begin{proof}
Consider $\Delta(x)=|x|-K(x\mid|x|)-1$.
\\
\fbox{\em $\Delta$ is a test.}
Clearly, $\{(m,x) \mid \Delta(x)\geq m\}$ is computably enumerable.
\\
$\Delta(x)\geq m$ means $K(x\mid|x|)\leq |x|-m-1$.
So no more elements in $\{x \mid \Delta(x)\geq m\ \wedge\ |x|=n\}$
than programs of length $\leq n-m-1$, which is $2^{n-m}-1$.
\\
\fbox{\em $\Delta$ is largest.}
$x$ is determined by its rank in the set
$V_{\delta(x)}=\{z \mid \delta(z)\geq \delta(x)\ \wedge\ |z|=|x|\}$.
Since this set has $\leq 2^{n-\delta(x)}$ elements,
the rank of $x$ has a binary representation of length
$\leq |x|-\delta(x)$.
Add useless zeros ahead to get a word $p$ with length
$|x|-\delta(x)$.
\\
With $p$ we get $|x|-\delta(x)$.
With $|x|-\delta(x)$ and $|x|$ we get $\delta(x)$ and construct
$V_{\delta(x)}$. With $p$ we get the rank of $x$ in this set,
hence we get $x$.
Thus,\\
$K(x\mid|x|) \leq |x|-\delta(x) +c$,
i.e. $\delta(x)<\Delta(x)+c$.
\end{proof}
\end{quote}}
The importance of the previous result is the following corollary
which insures that, for words,
incompressibility implies (hence is equivalent to) randomness.
\begin{corollary}[Martin-L\"of, 1965]
Incompressibility passes all statistical tests.
I.e. for all $c$, for all statistical test $(V_m)_m$,
there exists $d$ such that
$$
\forall x\ (x\mbox{ is $c$-incompressible }
\Rightarrow\ x\notin V_{c+d})
$$
\end{corollary}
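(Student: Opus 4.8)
The plan is to deduce this directly from the existence of a largest statistical test (Theorem~\ref{thm:test}), exploiting the explicit form $\Delta(x)=|x|-K(x\mid|x|)-1$ of that test obtained in its proof. The whole point of Martin-L\"of's theorem is that a single object $\Delta$ dominates every test up to a constant, so it suffices to check that incompressible strings escape $\Delta$; every other test is then absorbed for free.

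First I would fix a statistical test $(V_m)_m$ and let $\delta$ be its associated function, so that $x\in V_m\iff\delta(x)\geq m$. Theorem~\ref{thm:test} furnishes a constant $c_0$ (depending only on the test) with $\delta(x)<\Delta(x)+c_0$ for all $x$, equivalently $V_{m+c_0}\subseteq\{z:\Delta(z)\geq m\}$. Next I would read off what incompressibility says about $\Delta$. If $x$ is length conditional $c$-incompressible, i.e.\ $K(x\mid|x|)\geq|x|-c$, then $\Delta(x)=|x|-K(x\mid|x|)-1\leq c-1$, whence $\delta(x)<c-1+c_0$. Choosing $d=c_0$, membership $x\in V_{c+d}$ would force $\delta(x)\geq c+c_0>c-1+c_0>\delta(x)$, a contradiction; hence $x\notin V_{c+d}$.

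The one genuine wrinkle, and the step I expect to be the main obstacle, is that the largest test is phrased through the length conditional complexity $K(x\mid|x|)$, whereas the corollary speaks of plain $c$-incompressibility, $K(x)\geq|x|-c$. The naive bound $K(x\mid|x|)\leq K(x)+O(1)$ from Proposition~\ref{p:bound} runs the wrong way, and combining it with $K(x)\leq K(x\mid|x|)+2\log|x|+O(1)$ only yields a $\log|x|$ slack in $\Delta$, which is not a constant. To bridge the gap cleanly I would instead invoke the earlier Proposition comparing the two incompressibility notions, part 2 of which states that a plain $c$-incompressible word is length conditional $(2c+d_0)$-incompressible for a fixed $d_0$. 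Feeding $c'=2c+d_0$ into the argument of the previous paragraph gives $x\notin V_{c'+c_0}=V_{2c+d_0+c_0}$, so the corollary holds with $d=c+d_0+c_0$. Note that $d$ is allowed to depend both on $c$ and on the test, which is exactly what the two absorbed constants $d_0$ and $c_0$ require.

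Finally, I would remark that one can bypass the largest test altogether and argue by direct compression: since at most $2^{n-(c+d)}$ strings of length $n$ lie in $V_{c+d}$, the rank of $x$ inside this computably enumerable set codes $x$ given $|x|$ in about $|x|-(c+d)$ bits (Proposition~\ref{p:rank}), contradicting incompressibility once $d$ exceeds the coding overhead. This is really the same computation that shows $\Delta$ is a test, so I would present the short route through Theorem~\ref{thm:test} as the main line and keep the rank argument as the underlying intuition.
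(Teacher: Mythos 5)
Your proposal is correct and follows essentially the same route as the paper: both deduce the corollary from Theorem~\ref{thm:test} by evaluating the largest test $\Delta(x)=|x|-K(x\mid|x|)-1$ on an incompressible string and absorbing the domination constant into $d$. The one place you go beyond the paper is the bridge between plain and length conditional incompressibility: the paper's own proof silently opens with ``let $x$ be length conditional $c$-incompressible'', even though the corollary is stated for plain $c$-incompressibility, whereas you correctly note that $K(x\mid|x|)\leq K(x)+O(1)$ runs the wrong way and instead invoke part~2 of the proposition comparing the two notions ($c$-incompressible implies length conditional $(2c+d_0)$-incompressible), landing on $x\notin V_{2c+d_0+c_0}$, i.e.\ $d=c+d_0+c_0$; since $d$ is allowed to depend on $c$ and on the test, this is exactly what the statement requires, and your version is in this respect more careful than the paper's. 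Your closing observation that one can bypass the largest test and argue directly by coding the rank of $x$ in $V_{c+d}$ is also sound --- modulo the caveat that $V_m$ is only computably enumerable rather than computable, so one must rank by order of enumeration rather than lexicographically, the same caveat implicit in the paper's proof of Theorem~\ref{thm:test} --- and you rightly identify it as the computation hiding inside the proof that $\Delta$ is a test.
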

{\small\begin{quote}
\begin{proof}
Let $x$ be length conditional $c$-incompressible.
This means that $K(x\mid |x|)\geq |x|-c$.
Hence $\Delta(x)=|x|-K(x\mid|x|)-1\leq c-1$, which means that
$x\notin U_c$.
\\
Let now $(V_m)_m$ be a statistical test.
Then there is some $d$ such that $V_{m+d}\subseteq U_m$
Therefore $x\notin V_{c+d}$.
\end{proof}
\end{quote}}
\begin{remark}
Observe that incompressibility is a {\em bottom-up} notion:
we look at the value of $K(x)$ (or that of $K(x \mid |x|)$).
\\
On the opposite, passing statistical tests is a {\em top-down}
notion.
To pass all statistical tests amounts to an inclusion
in an intersection: namely, an inclusion in
$$
\bigcap_{(V_m)_m}\ \bigcup_c\ V_{m+c}
$$
\end{remark}
%
%
\subsection{Randomness: a new foundation for probability theory?}
%
Now that there is a sound mathematical notion of randomness
(for finite objects), or more exactly a measure of randomness,
is it possible/reasonable to use it as a new foundation for
probability theory?
\\
Kolmogorov has been ambiguous on this question.
In his first paper on the subject (1965, \cite{kolmo65}, p. 7),
Kolmogorov briefly evoked that possibility :
\begin{quote}
\dots to consider the use of the
[Algorithmic Information Theory] constructions
in providing a new basis for Probability Theory.
\end{quote}
However, later (1983, \cite{kolmo83}, p. 35--36),
he separated both topics
\begin{quote}
``there is no need whatsoever to change the
established construction of the mathematical
probability theory on the basis of the general theory
of measure.
I am not enclined to attribute the significance of
necessary foundations of probability theory to the
investigations [about Kolmogorov complexity] that
I am now going to survey.
But they are most interesting in themselves.
\end{quote}
though stressing the role of his new theory of random
objects for {\em mathematics as a whole}
(\cite{kolmo83}, p. 39):
\begin{quote}
The concepts of information theory as applied
to infinite sequences give rise to very interesting
investigations, which, without being indispensable
as a basis of probability theory, can acquire a
certain value in the investigation of the
algorithmic side of mathematics as a whole.
\end{quote}
%
%
%
%
%
%
\section{Formalization of randomness for infinite objects}
%
We shall stick to infinite sequences of zeros and ones:
$\{0,1\}^\N$.
%
%
\subsection{Martin-L\"of approach with topology and computability}
%
This approach is an extension to infinite sequences of the one
he developed for finite objects, cf. \S\ref{ss:tests}.
\medskip\\
To prove a probability law amounts to prove that
a certain set $X$ of sequences has probability one.
To do this, one has to prove that the complement set
$Y=\cantor \setminus X$ has probability zero.
Now, in order to prove that $Y \subseteq \cantor$ has
probability zero, basic measure theory tells us that
one has to include $Y$ in open sets with
arbitrarily small probability.
I.e. for each $n\in \N$ one must find an open set
$U_{n}\supseteq Y$
which has probability $\leq \frac{1}{2^n}$.
\\
If things were on the real line ${\bf R}$
we would say that $U_{n}$ is a countable union of
intervals with rational endpoints.
\\
Here, in $\cantor$, $U_{n}$ is a
countable union of sets of the form
$u\cantor$ where $u$ is a finite binary
string and $u\cantor$ is the set of infinite sequences
which extend $u$.
\\
In order to prove that $Y$ has probability zero,
for each $n\in \N$ one must find a family
$(u_{n,m})_{m\in\N}$ such that
$Y\subseteq \bigcup_{m} u_{n,m}\cantor$
and $Proba(\bigcup_{m} u_{n,m}\cantor)\leq \frac{1}{2^n}$
for each $n\in \N$.
\\
Now, Martin-L\"of makes a crucial observation:
mathematical probability laws which we can consider
necessarily have some effective character.
And this effectiveness should reflect in the proof
as follows:
{\em the doubly indexed sequence
$(u_{n,m})_{{n,m\in\N}}$ is computable.}

Thus, the set $\bigcup_{m} u_{n,m}\cantor$ is a
{\em computably enumerable open set} and
$\bigcap_{n} \bigcup_{m} u_{n,m}\cantor$
is a countable intersection of a
{\em computably enumerable family of open sets}.
\medskip\\
Now comes the essential theorem, which is completely analog
to Theorem \ref{thm:test}.
\begin{theorem}[Martin-L\"of \cite{martinlof66}]
Let's call constructively null $G_\delta$ set any set of the form
$\bigcap_{n} \bigcup_{m} u_{n,m}\cantor$
where the sequence $u_{n,m}$ is computably enumerable
and $Proba(\bigcup_{m} u_{n,m}\cantor)\leq \frac{1}{2^n}$
(which implies that the intersection set has probability zero).
\\
There exist a largest constructively null $G_\delta$ set
\end{theorem}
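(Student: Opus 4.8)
The plan is to follow closely the proof of Theorem \ref{thm:test}: build a single constructively null $G_\delta$ set that, level by level, absorbs a shifted copy of every such set, using a geometric series to respect the measure bounds. The genuinely new point, compared with the finite-word case, is that the family of all constructively null $G_\delta$ sets is not immediately enumerable: a computably enumerable double sequence $(u_{n,m})_{n,m}$ defines a legitimate object only when it satisfies $Proba(\bigcup_m u_{n,m}\cantor)\leq 2^{-n}$ for every $n$, and this measure condition is undecidable in an index for the enumerating machine.

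First I would circumvent this by a trimming procedure that turns an arbitrary computably enumerable double sequence into one meeting the measure bound, acting as the identity on sequences that already meet it. Fixing $n$ and enumerating the strings $u_{n,0},u_{n,1},\dots$, I maintain the (exactly computable, rational) measure of the finite union of cylinders accepted so far, and I admit a newly enumerated $u_{n,m}$ into the trimmed sequence precisely when adding $u_{n,m}\cantor$ keeps that measure $\leq 2^{-n}$, discarding it otherwise. By monotonicity of measure, if the original sequence already satisfied the bound then every finite subunion does too, so every cylinder is admitted and trimming changes nothing; in all cases the trimmed sequence is computably enumerable and satisfies the bound. Running this uniformly over a universal machine enumerating all computably enumerable sets yields an enumeration $(V^e)_{e\in\N}$ with $V^e=\bigcap_n\bigcup_m u^e_{n,m}\cantor$, listing \emph{exactly} the constructively null $G_\delta$ sets (each such set arises from its own defining sequence, which trims to itself).

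Next I would define the universal set by diagonalizing with a shift that pays for the summation. Put
$$U_n=\bigcup_{e\in\N}\bigcup_m u^e_{n+e+1,m}\cantor$$
and $U=\bigcap_n U_n$. The sequence defining $U_n$ is computably enumerable, being a union over a computably enumerable family, and since level $n+e+1$ of $V^e$ has measure $\leq 2^{-(n+e+1)}$, subadditivity gives $Proba(U_n)\leq\sum_e 2^{-(n+e+1)}=2^{-n}$. Hence $U$ is itself a constructively null $G_\delta$ set. For maximality, take any $e$ and any $x\in V^e$: then $x\in\bigcup_m u^e_{k,m}\cantor$ for every $k$, in particular for $k=n+e+1$, so $x\in U_n$ for every $n$ and thus $x\in U$. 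Therefore $V^e\subseteq U$ for all $e$, so $U$ contains every constructively null $G_\delta$ set and is the largest one.

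I expect the trimming step to be the main obstacle, or at least the only subtle point: one must check that computing the measure of a finite union of cylinders is genuinely effective (nestings and overlaps of cylinders must be handled, but each $u\cantor$ has rational measure $2^{-|u|}$ and finite unions therefore have computable rational measure), and that the identity-on-valid-inputs property really holds, since this is what guarantees that $(V^e)_e$ lists all constructively null $G_\delta$ sets rather than merely a subfamily. The diagonal construction and the measure accounting are then routine, paralleling Theorem \ref{thm:test}.
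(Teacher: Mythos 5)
Your proof is correct: the trimming procedure (admit a newly enumerated string only if the exactly computable rational measure of the finite union so far stays $\leq 2^{-n}$, which acts as the identity on sequences already satisfying the bound), the resulting enumeration $(V^e)_e$ of exactly the constructively null $G_\delta$ sets, and the shifted diagonal union $U_n=\bigcup_e\bigcup_m u^e_{n+e+1,m}\cantor$ with $Proba(U_n)\leq\sum_e 2^{-(n+e+1)}=2^{-n}$ together constitute the classical construction of a universal Martin-L\"of test, and your maximality verification is sound. Be aware, however, that the paper gives no proof of this theorem at all --- it only remarks that it is ``completely analog'' to Theorem~\ref{thm:test} --- and the proof it supplies for Theorem~\ref{thm:test} follows a genuinely different route from yours: there the largest test is produced directly from Kolmogorov complexity, as $\Delta(x)=|x|-K(x\mid|x|)-1$, with largeness proved by a rank-coding argument rather than by enumerating all tests and summing a geometric series. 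Your enumeration-plus-diagonalization is arguably the better-adapted route in the infinite setting: a direct plain-complexity analog of the paper's $\Delta$ is unavailable for sequences, since by Theorem~\ref{thm:no} no $\alpha$ satisfies $K(\alpha\segment k)\geq k-O(1)$ for all $k$, and complexity descriptions of the largest constructively null $G_\delta$ set require the variants of Theorem~\ref{thm:HS} (process, prefix or monotone complexity). What the complexity approach buys in the finite case is an explicit identification of the universal test with incompressibility; what yours buys is uniformity and direct applicability to $\cantor$, at the price of the trimming step --- the one subtle point, which you handle correctly, the measure of a finite union of cylinders being an exactly computable rational.
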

Let's insist that the theorem says {\em largest}, up to nothing, 
really largest.
\begin{definition}[Martin-L\"of \cite{martinlof66}]
A sequence $\alpha\in\cantor$ is random if it belongs to no
constructively null $G_\delta$ set
(i.e. if it does not belongs to the largest one).
\end{definition}
In particular, the family of random sequence, being the complement
of a constructively null $G_\delta$ set, has probability $1$.
%
%
%
\subsection{The bottom-up approach}
%
\subsubsection{The naive idea badly fails}
The natural naive idea is to extend randomness from finite objects
to infinite ones.
The obvious first approach is to consider sequences
$\alpha\in\cantor$ such that, for some $c$,
\begin{equation}
\forall n\ \ K(\alpha\segment n)\geq n-c
\end{equation}
However, Martin-L\"of proved that there is no such sequence.
\begin{theorem}[Martin-L\"of \cite{martinlof71}]\label{thm:no}
For every $\alpha\in\cantor$ there are infinitely many $k$
such that $K(\alpha\segment k) \leq k-\log k -O(1)$.
\end{theorem}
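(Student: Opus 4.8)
The plan is to give a direct construction: for each large $m$ I will exhibit one specific prefix length $k=k_m$ together with a program for $\alpha\segment k_m$ whose length is about $k_m-\log k_m$, and then pass to the optimal machine via the invariance theorem (Theorem~\ref{thm:invariance}). The guiding intuition is that the \emph{length} of a program is information the universal machine receives for free: if the construction is arranged so that both the value $k_m$ and some initial bits of $\alpha\segment k_m$ can be reconstructed from the mere length of the program, then those bits need not be written down, which is exactly where the saving of $\log k_m$ bits comes from.

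Concretely, for each $m$ let $r_m\in[0,2^m)$ be the integer whose $m$-bit binary representation (leading zeros allowed) is the word $\alpha\segment m=\alpha(0)\alpha(1)\cdots\alpha(m-1)$, and set $k_m=2^m+r_m$, so that $2^m\le k_m<2^{m+1}$ and hence $\lfloor\log k_m\rfloor=m$. First I would define a (total) computable function $\psi:\words\to\words$ that acts on an input $p$ using \emph{only} its length $L=|p|$: it recovers $m$ and $r_m$ from the identity $L=k_m-m=2^m+r_m-m$, reconstructs the initial segment $\alpha\segment m$ as the length-$m$ binary representation of $r_m$, and outputs the concatenation $(\alpha\segment m)\,p$.

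Now I would feed $\psi$ the tail block $p=\alpha(m)\alpha(m+1)\cdots\alpha(k_m-1)$, which has length $|p|=k_m-m$. By construction the machine reconstructs $w=\alpha\segment m$ and outputs $w\,p=\alpha\segment k_m$. The invariance theorem then yields $K(\alpha\segment k_m)\le K_\psi(\alpha\segment k_m)+O(1)\le |p|+O(1)=k_m-m+O(1)$, which is $k_m-\log k_m$ up to an additive constant; and since the fixed $\psi$ does not depend on $\alpha$, this constant is uniform in $\alpha$. Because $k_m\ge 2^m$, the lengths $k_m$ are pairwise distinct and tend to infinity, producing the required infinitely many $k$.

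The hard part, and the one delicate step to watch, is the clause ``recover $m$ from $L$'': the map $m\mapsto L=2^m+r_m-m$ is not quite injective, since the admissible $L$-ranges for consecutive values of $m$ overlap by a bounded amount, so $L$ determines $m$ only up to an $O(1)$ ambiguity. I would dispose of this either by having $\psi$ adopt a fixed convention (say, the least $m$ with $2^m-m\le L<2^{m+1}-m$) and absorbing the resulting bounded error into the additive constant, or by prefixing $p$ with a single disambiguating bit; either way the overhead is $O(1)$ and leaves the $-\log k$ gain intact. Everything else is routine: $r_m<2^m$ holds by definition, so the $m$-bit zero-padding recovers $\alpha\segment m$ faithfully, and the whole procedure is plainly computable.
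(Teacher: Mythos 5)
Your construction is correct and is essentially the paper's own proof: there too the initial $\approx\log k$ bits of $\alpha\segment k$ are smuggled into the \emph{length} of the program (the paper reads $\alpha\segment m$ directly as the binary numeral for $k$, whereas you set $k_m=2^m+r_m$, which is the same device up to the implicit leading $1$), and the two-fold ambiguity in recovering $k$ from that length is resolved by one extra bit of information, exactly as in the paper. One caveat: of your two fixes for the ambiguity, only the disambiguating-bit option is sound, since the least-$m$ convention miscodes every pair $(m,0)$ and hence fails at \emph{all} $m$ for $\alpha=0^\omega$, an error that cannot be absorbed into an additive constant.
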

{\small\begin{quote}
\begin{proof}
Let $f(z)= k -(\lfloor\log z\rfloor+1)$
First, observe that
$$
f(z+2)-f(z)
= 2 - \lfloor\log (z+2)\rfloor + \lfloor\log z\rfloor
= 2 - (\lfloor\log z + \log(1+\frac{2}{z}\rfloor
                                  - \lfloor\log z\rfloor)
>0
$$
since $\log(1+\frac{2}{z}\leq 1$ for $kz\geq1$.
\\
Fix any $m$ and consider $\alpha\segment m$.
This word is the binary representation of some integer $k$
such that $m=\lfloor\log k\rfloor +1$.
Now, consider $x=\alpha\segment k$ and let $y$ be the suffix
of $x$ of length $k-m=f(k)$.
From $y$ we get $|y|=k-m=f(k)$.
Since $f(z+2)-f(z)>0$, there are at most two (consecutive)
integers $k$ such that $f(k)=|y|$.
One bit of information tells which one in case there are two
of them.
So, from $y$ (plus one bit of information) one gets $m$.
Hence the binary representation of $m$,
which is $\alpha\segment m$.
By concatenation with $y$, we recover $x=\alpha\segment k$.
\\
This process being effective, Proposition \ref{p:bound}
(point 3) insures that
$$
K(\alpha\segment k) \leq K(y)+O(1) \leq |y|+O(1)
= k-m +O(1) = k-\log k +O(1)
$$
\end{proof}
\end{quote}}
The above argument can be extended to prove a much more general
result.
\begin{theorem}[Large oscillations,
Martin-L\"of, 1971 \cite{martinlof71}]
\label{thm:osc}
Let $f:\N\to\N$ be a total computable function satisfying
$\sum_{n\in\N}2^{-g(n)}=+\infty$.
Then, for every $\alpha\in\cantor$, there are infinitely many $k$
such that $K(\alpha\segment k) \leq k-f(k)$.
\end{theorem}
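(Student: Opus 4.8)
The plan is to extend the self-delimiting ``address'' trick used for Theorem~\ref{thm:no}, whose obstruction condition is exactly $\sum_n 2^{-f(n)}=+\infty$ (there $f(n)=\lfloor\log n\rfloor+1$). Recall the mechanism: one reads an initial segment of $\alpha$ as the address of a later cut position $k$, stores only the bits lying \emph{beyond} the address, recovers the address bits from $k$, and reads $k$ itself, for free, from the \emph{length} of the stored tail. A naive global version would demand a single prefix-free code assigning to each position $k$ a codeword of length $f(k)$; but Kraft's inequality forbids this precisely when $\sum_n 2^{-f(n)}>1$, which is the divergent regime we are in. This forces a \emph{block} decomposition, each block carrying its own Kraft-saturated code.

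First I would use the divergence $\sum_n 2^{-f(n)}=+\infty$ to cut $\N$ into consecutive blocks $I_0,I_1,\dots$, with $I_j=[a_j,a_{j+1})$ chosen so that $\sum_{n\in I_j}2^{-f(n)}\geq 1$; divergence guarantees infinitely many such blocks, and the cut points $a_j$ are computable from $f$. On each block I would build a \emph{complete} prefix-free code $\{s_n\}_{n\in S_j}$, with $S_j\subseteq I_j$ and $|s_n|=f(n)$ --- equivalently a tiling of $[0,1)$ by dyadic intervals of lengths $2^{-f(n)}$, possible exactly because the block mass is $\geq 1$. Reading the block bits $\alpha(a_j)\alpha(a_j+1)\cdots$ as a point of $[0,1)$ then lands in a unique tile, selecting a unique $k=k(j,\alpha)\in S_j$ whose codeword $s_k$ coincides with $\alpha(a_j)\cdots\alpha(a_j+f(k)-1)$; crucially $s_k$ is recomputable from $k$ and $j$ alone.

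The description of $\alpha\segment k$ would then supply only the bits of $\alpha$ lying \emph{outside} the codeword window, namely $\alpha\segment a_j$ and the block tail $\alpha(a_j+f(k))\cdots\alpha(k-1)$, while the $f(k)$ codeword bits are regenerated from $k$; a computable reassembly splices $s_k$ back in, so Proposition~\ref{p:bound}(3) bounds $K(\alpha\segment k)$ by the stored length $k-f(k)$ plus whatever it costs to communicate $k$. Since distinct blocks yield strictly increasing $k$, this produces infinitely many cut positions, as required.

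The hard part is to communicate $k$ at cost $O(1)$ rather than the naive $\sim\log k$. An additive $O(1)$ is harmless, since replacing $f$ by $f+O(1)$ leaves $\sum_n 2^{-f(n)}=+\infty$ and so can be absorbed into $f$; a $\log k$ overhead cannot be absorbed, as $f+\log$ may turn the series convergent (e.g.\ $f(n)=\log n+\log\log n$ is borderline divergent). The scheme that writes $k$ explicitly spends $\sim\log k$ and therefore fails. I would instead recover $k$ from the length channel, exactly as in Theorem~\ref{thm:no}: thin $S_j$ so that the stored length determines $k$, the computable block boundaries making the location of $a_j$ free. The delicate verification --- that blocks and their codes can be chosen so the stored lengths decode $k$ while each per-block mass stays bounded below, so that a qualifying $k$ survives in each of the infinitely many blocks --- is where the real work concentrates.
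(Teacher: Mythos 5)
The paper offers no actual proof of Theorem~\ref{thm:osc}: it only remarks that the argument of Theorem~\ref{thm:no} ``can be extended'' (and note the $g$ in the statement is a typo for $f$). Your route --- blocks $I_j=[a_j,a_{j+1})$ of divergence mass bounded below, a Kraft-saturated prefix code read in a block-local window starting at $a_j$, storage of exactly the bits outside the codeword, and recovery of $k$ through the length channel --- is precisely the canonical way of carrying that extension out, and every ingredient you name is the right one. Your honesty about where the difficulty sits is also accurate: as submitted, the proposal stops short at the decoding step, so strictly there is a gap. But the gap closes along exactly the lines you indicate, via two observations you do not quite reach. First, inside one block restrict to positions with $f(n)\leq n-a_j$ (your codeword-window condition); the discarded mass is at most $\sum_{i\geq 0}2^{-(i+1)}=1$. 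Among survivors, group positions by the stored length $\ell=n-f(n)$: within one class, $k<k'$ forces $f(k')=f(k)+(k'-k)$, so weights decay at least geometrically and keeping only the least member of each class retains at least half the remaining mass. Hence blocks cut so that $\sum_{n\in I_j}2^{-f(n)}\geq 3$ still carry mass $\geq 1$ after both prunings; a greedy largest-weight-first selection then extracts a subfamily of total weight exactly $1$ (partial sums stay multiples of the current weight, so no overshoot), which tiles $[0,1)$ with aligned dyadic intervals. Now $n\mapsto n-f(n)$ is injective on the coded set $S_j$ while every $\alpha$ is still captured, since the tiling is complete --- this is the ``thinning with per-block mass bounded below'' you asked for.

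Second, the cross-block ambiguity dissolves for free: the window condition gives $a_j\leq k-f(k)=\ell$, and trivially $\ell\leq k<a_{j+1}$, so $\ell\in[a_j,a_{j+1})$ and the stored length $\ell=|p|$ by itself identifies the block (the blocks partition $\N$), then the unique $k\in S_j$ with $k-f(k)=\ell$, then $s_k$, and the splice recovering $\alpha\segment k$ from $p\segment a_j$, $s_k$ and the rest of $p$ is computable from $p$ alone. This yields $K(\alpha\segment k)\leq k-f(k)+c$ with one such $k$ per block, hence infinitely many; the constant is absorbed, as you say, by running the whole construction on $f+d$ for a suitable fixed $d$ (the construction is uniform in $d$, with machine index of size $O(\log d)$, so some $d$ beats its own invariance constant --- or invoke the fixed point theorem of Theorem~\ref{thm:3thms}). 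Your side remarks are also sound: the $\log$ overhead of writing $k$ explicitly is genuinely unabsorbable (for $f(n)=\lfloor\log n\rfloor$ the sum diverges but for $2\lfloor\log n\rfloor$ it converges), which is exactly why the length channel is forced. In short: right approach, correct skeleton, and the one step you flag as ``real work'' is real but completable exactly as you suspect; the only substantive omissions are the geometric-decay bound justifying the thinning and the observation that $a_j\leq|p|<a_{j+1}$ localizes the block.
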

%
%
%
\subsubsection{Miller \& Yu's theorem}
%
It took about forty years to get a characterization of randomness
via plain Kolmogorov complexity which completes very simply
Theorem \ref{thm:osc}.
\begin{theorem}[Miller \& Yu, 2004 \cite{milleryu}]
\label{thm:milleryu}
1. Let $f:\N\to\N$ be a total computable function satisfying
$\sum_{n\in\N}2^{-g(n)}<+\infty$.
Then, for every random $\alpha\in\cantor$,
there exists $c$ such that
$K(\alpha\segment k \mid k) \geq k-f(k)-c$ for all $k$.
\medskip
2. There exists a total computable function $f:\N\to\N$ satisfying
$\sum_{n\in\N}2^{-g(n)}<+\infty$ such that
for every non random $\alpha\in\cantor$
there are infinitely many $k$ such that
$K(\alpha\segment k) \leq k-f(k)$.
\end{theorem}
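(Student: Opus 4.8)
The plan is to handle the two parts by quite different means: part~1 is a counting argument dressed as a Martin-L\"of test, whereas part~2 needs an explicit construction and is where the difficulty sits. For part~1, fix a computable $f$ with $\sum_n 2^{-f(n)}=S<\infty$ and, for each $c\in\N$, set
$$W_c=\bigcup\{x\cantor : x\in\words,\ K(x\mid|x|)<|x|-f(|x|)-c\}.$$
Since $K(\,\cdot\mid\cdot\,)$ is computable from above (\S\ref{ss:Kabove}), the relation $K(x\mid|x|)<|x|-f(|x|)-c$ is computably enumerable, so $(W_c)_c$ is a uniformly computably enumerable family of open sets. The counting is the one already used for the incompressibility propositions: at a given length $k$ there are fewer than $2^{\,k-f(k)-c}$ words with $K(x\mid k)<k-f(k)-c$, each contributing a cylinder of measure $2^{-k}$, so the length-$k$ part of $W_c$ has measure $<2^{-f(k)-c}$ and hence $Proba(W_c)\le S\,2^{-c}$. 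Choosing $d$ with $S\,2^{-d}\le 1$, the family $(W_{n+d})_n$ presents a constructively null $G_\delta$ set. A random $\alpha$ avoids it, so $\alpha\notin W_{n+d}$ for some $n$; taking $c=n+d$ yields $K(\alpha\segment k\mid k)\ge k-f(k)-c$ for all $k$, which is exactly the claim.

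For part~2 I would work from a universal Solovay test, a standard equivalent of the Martin-L\"of definition: a computably enumerable sequence $(\tau_i)_{i\in\N}$ of words with $\sum_i 2^{-|\tau_i|}<\infty$ such that $\alpha$ is non-random if and only if $\tau_i$ is a prefix of $\alpha$ for infinitely many $i$. The gain of this reformulation is that a non-random $\alpha$ then has infinitely many prefixes $\alpha\segment k$ lying in the test. To compress those prefixes I would invoke the plain-complexity analogue of Kraft's inequality: if a computably enumerable set of requests $(x,\ell)$ never asks for more than $2^\ell$ words at any single level $\ell$, one can build a machine serving each request by a program of length $\ell$, whence $K(x)\le\ell+O(1)$. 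The crucial point is that $|\tau_i|=k$ is recovered for free as the length of the output, so no separate encoding of $k$ is needed and the resulting bound is genuinely unconditional. Writing $n_k$ for the number of test words of length $k$, the weights $a_k=n_k 2^{-k}$ are summable, and this is what makes an $f$ with $\sum_k 2^{-f(k)}<\infty$ possible at all: morally one wants to serve each length-$k$ test word at level $k-f(k)$ with $f(k)\approx-\log a_k$.

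The main obstacle is exactly that this ideal $f$ is not computable: the test is only computably enumerable, so $n_k$ (and $a_k$) are approximable from below but never known at any finite stage, whereas the theorem demands a single \emph{computable} $f$. The heart of the argument is therefore to fix a computable $f$ with $\sum_k 2^{-f(k)}<\infty$ in advance and to serve requests dynamically: whenever level $k-f(k)$ threatens to exceed its budget of $2^{\,k-f(k)}$ programs, the surplus test words must be delayed and served at neighbouring lengths or levels, so that (i) no per-level budget is ever exceeded, (ii) the total weight stays finite, and (iii) every non-random $\alpha$, being caught by the test infinitely often, is still served at infinitely many lengths $k$ by a program of length $\le k-f(k)$. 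Reconciling the non-computable profile of the universal test with a prescribed computable budget while preserving this infinitely-often capture is the delicate bookkeeping that carries the real content of the Miller--Yu proof; the generic logarithmic oscillation of Theorem~\ref{thm:osc} is what guarantees there is enough room to spare below the convergence threshold $\sum 2^{-f}<\infty$.
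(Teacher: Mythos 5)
Your part~1 is correct and complete, and it is the standard argument: upper semicomputability of $K(\cdot\mid\cdot)$ makes the sets $W_c=\bigcup\{x\cantor : K(x\mid |x|)<|x|-f(|x|)-c\}$ uniformly computably enumerable and nested, the counting of programs gives measure at most $S2^{-c}$, and the shift by $d$ turns $(W_{n+d})_n$ into a constructively null $G_\delta$ set in the sense of the paper, which every random $\alpha$ escapes at some level $c=n+d$. (You also correctly read the statement's $g$ as $f$; note the paper itself gives no proof of this theorem, referring instead to Miller--Yu \cite{milleryu} and the elementary proof of Bienvenu, Merkle and Shen \cite{bms07}, so your attempt can only be judged against that cited argument.)

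Part~2, however, has a genuine gap: you correctly reduce to a universal Solovay test plus a Kraft-type request lemma for plain complexity, and you correctly name the obstruction (the per-length weights $a_k=n_k2^{-k}$ are only approximable from below, while $f$ must be computable), but the proposal stops exactly where the proof begins, with an unspecified promise that surplus words are ``delayed and served at neighbouring lengths or levels.'' The missing idea --- the crux of the Bienvenu--Merkle--Shen argument --- is that a test word $\tau$ may be replaced by \emph{all} $2^{m-|\tau|}$ of its extensions of any larger length $m$: this leaves the total weight unchanged and preserves capture, since any $\alpha$ extending $\tau$ still has a length-$m$ prefix among these extensions. Choosing $m$ at least the stage at which $\tau$ is enumerated makes the per-length weight $w_m$ \emph{computable} (by stage $m$ one knows every word that will ever be assigned length $m$), after which one can set $2^{-f(m)}\approx w_m+2^{-m}$ to obtain a computable $f$ with $\sum_m 2^{-f(m)}<+\infty$ whose per-level budgets are respected, and the request lemma then yields $K(\alpha\segment m)\leq m-f(m)+O(1)$ for infinitely many $m$ (the additive constant being absorbed by replacing $f$ with $f+O(1)$, which keeps the series convergent). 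Without this weight-preserving deferral, ``serving at neighbouring levels'' can either overflow a budget or lose the infinitely-often capture, so the plan as written does not go through. Finally, your closing appeal to Theorem~\ref{thm:osc} is misplaced: that theorem concerns divergent series $\sum_n 2^{-f(n)}=+\infty$ and plays no role here; the room you need comes from the deferral trick, not from generic oscillations.
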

Recently, an elementary proof of this theorem
was given by Bienvenu \& Merkle \& Shen, \cite{bms07}.
%
%
%
\subsubsection{Kolmogorov randomness and $\emptyset'$}
\label{sss:nies}
A natural question following Theorem \ref{thm:no} is to look at
the so-called Kolmogorov random sequences which satisfy
$K(\alpha\segment k)\geq k-O(1)$
for infinitely many $k$'s.
This question got a very surprising answer involving randomness
with oracle the halting problem $\emptyset'$.
\begin{theorem}[Nies, Stephan \& Terwijn \cite{nies}]
\label{thm:nies}
Let $\alpha\in\cantor$.
There are infinitely many $k$
such that $K(\alpha\segment k) \leq k-f(k)$
(i.e. $\alpha$ is Kolmogorov random)
if and only if $\alpha$ is $\emptyset'$-random.
\end{theorem}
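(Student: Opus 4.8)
The plan is to prove the equivalence by contraposition in both directions, after translating ``$\emptyset'$-random'' into its test form: $\alpha$ is $\emptyset'$-random iff it lies outside $\bigcap_d\+G_d$ for every Martin-L\"of test $(\+G_d)_d$ whose components are uniformly $\emptyset'$-computably enumerable open sets with $Proba(\+G_d)\le 2^{-d}$. Two facts are the pillars. The first is that $\emptyset'$ computes the plain complexity $K$ exactly: the search procedure of \S\ref{ss:Knoncomput}, which needs the halting problem only to decide which programs converge, turns $\sigma\mapsto K(\sigma)$ into an oracle-computable function. The second is that $K$ is computable from above (\S\ref{ss:Kabove}), which together with the fact that $\emptyset'$ is itself computably enumerable will be the device for passing between oracle computations and oracle-free descriptions.

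For the implication ``$\emptyset'$-random $\Rightarrow$ Kolmogorov random'' I would show, contrapositively, that every sequence failing Kolmogorov randomness sits in a $\emptyset'$-test. Let $D_c(\sigma)$ abbreviate the $\emptyset'$-decidable predicate $K(\sigma)<|\sigma|-c$; failing Kolmogorov randomness means exactly that for every $c$ one has $D_c(\alpha\segment n)$ for all large $n$. For a length window $[a,b]$ the clopen set $R_{[a,b],c}=\{x:D_c(x\segment n)\text{ for all }n\in[a,b]\}$ is $\emptyset'$-decidable and, by the counting bound behind incompressibility in \S\ref{ss:tests} (at most $2^{\,n-c}$ strings of length $n$ are $c$-compressible), has measure $\le 2^{-c}$. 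Since the non-random sequences are covered by $\bigcup_a\bigcap_b R_{[a,b],c}$, I would assemble the test by choosing, with the oracle, window endpoints $b(a)$ so that the clopen pieces $R_{[a,b(a)],c}$ accumulate into a uniformly $\emptyset'$-c.e.\ open $\+G_c$ of measure $\le 2^{-c+1}$.

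The converse, ``Kolmogorov random $\Rightarrow$ $\emptyset'$-random'', I would also contrapose: starting from an $\emptyset'$-test $(\+V_n)_n$ with $\alpha\in\bigcap_n\+V_n$, I must exhibit, for each $c$, descriptions witnessing $K(\alpha\segment m)<m-c$ for all large $m$. The natural mechanism is the rank coding of Proposition \ref{p:rank}. If $\tau_n$ is the least prefix of $\alpha$ whose cylinder is enumerated into $\+V_n$, then $\tau_n$ is determined by its rank in that enumeration, and since $Proba(\+V_n)\le 2^{-n}$ this rank costs about $|\tau_n|-n$ bits; concatenating the length-$(m-|\tau_n|)$ tail describes $\alpha\segment m$ in about $m-n$ bits. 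Letting $n$ increase with $m$ would push the deficiency $m-K(\alpha\segment m)$ to infinity, which is precisely the negation of Kolmogorov randomness.

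The heart of the difficulty sits in this last step, and it is twofold. The rank code just sketched is read by a machine that enumerates $\+V_n$, so it bounds $K^{\emptyset'}$ rather than the oracle-free $K$ appearing in the statement; converting it into a genuine plain description is the crux, and it cannot be done uniformly, for by the large-oscillation theorem (Theorem \ref{thm:osc}) no sequence keeps $K(\alpha\segment m)\ge m-c$ at all $m$, so one may only aim at compressions that hold eventually, for each fixed $c$ in turn. The tool I would deploy is that an $\emptyset'$-c.e.\ set has a computable approximation eventually correct on every string, and that because $\emptyset'$ is c.e.\ this approximation retracts only finitely often, so a plain machine can guess a settling stage and have its guess ultimately vindicated. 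Keeping the bookkeeping cost of these guesses $o(m)$, so that it is absorbed by the growing gain $n\to\infty$ while that gain still dominates for all large $m$, is where I expect essentially the whole weight of the proof to fall; the dual measure-control in the first direction --- ensuring the windows accumulate inside a genuine $\emptyset'$-test rather than a weaker $\emptyset''$ one --- is a milder instance of the same phenomenon.
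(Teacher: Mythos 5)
First, a point of reference: the paper itself gives no proof of Theorem~\ref{thm:nies} --- it is a survey item quoted from Nies, Stephan \& Terwijn \cite{nies} --- and the statement as printed is garbled: the displayed condition $K(\alpha\segment k)\le k-f(k)$ is the large-oscillation condition of Theorem~\ref{thm:osc}, not Kolmogorov randomness. You silently and correctly repaired it to $\exists c\,\exists^\infty k\ K(\alpha\segment k)\ge k-c$, so your attempt must be judged against the published proof. Your second direction (not $\emptyset'$-random $\Rightarrow$ eventually compressible) is, as a plan, the genuine Nies--Stephan--Terwijn mechanism: rank-coding inside a level of a $\emptyset'$-test as in Proposition~\ref{p:rank}, plus the fact that $\emptyset'$ is itself c.e., so the oracle enumeration has a computable approximation whose finitely many retractions a plain machine can guess at $o(m)$ cost, non-uniformly in $c$ exactly as Theorem~\ref{thm:osc} forces. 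You name all the right pieces there, though the entire technical weight (settling stages, the measure of retracted cylinders) is left unexecuted.

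The genuine gap is in your first direction, which you dismiss as ``a milder instance of the same phenomenon''. It is not milder; as described, the step fails. Fix $c$ and let $T_a=\{x : K(x\segment n)<n-c \mbox{ for all } n\ge a\}$. Your clopen windows $R_{[a,b],c}$ are indeed $\emptyset'$-decidable and decrease, as $b$ grows, to $T_a$, and an oracle search for $b(a)$ with $\mu(R_{[a,b(a)],c})\le 2^{-c}+2^{-a}$ terminates since $\mu(T_a)\le 2^{-c}$. But the test level must be the union over $a$ of these pieces, and nothing bounds its measure: the excess $\mu(R_{[a,b(a)],c})-\mu(T_a)$ can be close to $2^{-c}$ for every $a$ (e.g.\ when all $\mu(T_a)=0$), and the excesses for distinct $a$ can be essentially disjoint, so the union's measure is uncontrolled. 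To trim the excess you would have to approximate $\mu(T_a)$ itself, but that real is only approximable \emph{from above} relative to $\emptyset'$ (it is the limit of the $\emptyset'$-computable decreasing sequence $\mu(R_{[a,b],c})$), so certifying closeness to it needs $\emptyset''$: your construction naturally yields a $\emptyset''$-test, i.e.\ it only shows that non-Kolmogorov-random sequences fail to be $3$-random. Solovay-style repairs (tying the compression constant to the window index so the measures sum) also fail, because for a given $\alpha$ the threshold $N_c$ past which $K(\alpha\segment n)<n-c$ may grow faster than any fixed window schedule, so $\alpha$ is captured only finitely often. This half is Miller's theorem, and its known proof needs an idea absent from your sketch: the class of ``compression functions'' $F$ (total functions with $F\le K$ pointwise and the counting property $\sharp\{\sigma : F(\sigma)\le n\}<2^{n+1}$) is a nonempty $\Pi^0_1$ class, so the low basis theorem provides one with $F'\equiv_T\emptyset'$; running your covering argument relative to $F$ makes the window measures $F$-computable, turning the cover into an $F'$-, hence $\emptyset'$-, Martin-L\"of test.
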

%
%
%
\subsubsection{Variants of Kolmogorov complexity and randomness}
Bottom-up characterization of random sequences were
obtained by Chaitin, Levin  and Schnorr using diverse
variants of Kolmogorov complexity..
\begin{definition}\label{def:SH}
1. [Schnorr, \cite{schnorr71} 1971]
For $\+O=\words$, the process complexity $S$ is the variant of
Kolmogorov complexity obtained by restricting to partial computable
functions $\words\to\words$ which are monotonous,
i.e. if $p$ is a prefix of $q$ and $V(p), V(q)$ are both defined
then $V(p)$ is a prefix of $V(q)$.
\medskip\\
2. [Chaitin, \cite{chaitin75} 1975]
The prefix-free variant $H$ of Kolmogorov
complexity is obtained by restricting to partial computable
functions $\words\to\words$ which have prefix-free domains.
\medskip\\
3. [Levin, \cite{zvonkin-levin} 1970]
For $\+O=\words$, the monotone variant $Km$ of Kolmogorov
complexity is obtained as follows:
$Km(x)$ is the least $|p|$ such that $x$ is a prefix of $U(p)$
where $U$ is universal among monotone partial computable functions.
\end{definition}
\begin{theorem}\label{thm:HS}
Let $\alpha\in\cantor$. The following conditions
Then $\alpha$ is random
if and only if $S(\alpha\segment k) \geq k -O(1)$
if and only if $H(\alpha\segment k) \geq k -O(1)$
if and only if $Km(\alpha\segment k) \geq k -O(1)$.
\end{theorem}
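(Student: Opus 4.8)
The plan is to establish the single cycle of implications
$$R \Rightarrow P_{Km} \Rightarrow P_S \Rightarrow P_H \Rightarrow R,$$
where $R$ abbreviates ``$\alpha$ is random'' and, for $X\in\{Km,S,H\}$, $P_X$ abbreviates ``$\exists c\ \forall k\ X(\alpha\segment k)\geq k-c$''. The two interior implications come for free from a chain of inequalities between the variants, valid uniformly in $x$:
$$Km(x)\leq S(x)+O(1)\leq H(x)+O(1).$$
The first holds because a process machine is a monotone machine for which exact output $V(p)=x$ is demanded, and that is more restrictive than $x\preceq V(p)$, so any $S$-program is a $Km$-program. For the second, I would take the optimal prefix-free machine and build a monotone machine $N$ that, on input $p$, searches for the unique prefix $q\preceq p$ lying in its (prefix-free) domain and outputs that value; this $N$ is monotone, reproduces the exact outputs, and so bounds $S$ by $H$. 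Since lower bounds propagate \emph{up} this chain, $P_{Km}\Rightarrow P_S\Rightarrow P_H$ is immediate, and the whole problem reduces to the two endpoints.

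For the backward endpoint $P_H\Rightarrow R$ I would contrapose and turn compressibility into a statistical test, exactly in the spirit of \S\ref{ss:tests}. If $\alpha$ is \emph{not} random it lies in the largest constructively null $G_\delta$ set $\bigcap_n\bigcup_m u_{n,m}\cantor$, with the level-$n$ measure bound $\leq 2^{-n}$. At level $2n$ the covering strings $\sigma$ satisfy $\sum_\sigma 2^{-|\sigma|}\leq 2^{-2n}$, so the requests ``emit a program of length $|\sigma|-n$ for $\sigma$'' have total Kraft weight $\sum_n 2^{n}2^{-2n}\leq 1$; an effective (request-based) form of Kraft's inequality then yields a prefix-free machine realizing all of them, whence $H(\sigma)\leq|\sigma|-n+O(1)$. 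As some covering $\sigma$ is a prefix of $\alpha$ at each level, $H(\alpha\segment k)\leq k-n+O(1)$ for lengths $k$ whose defect tends to infinity, i.e. $\neg P_H$. For the forward endpoint $R\Rightarrow P_{Km}$ I would again contrapose: from $\neg P_{Km}$ form the uniformly computably enumerable open sets $U_c=\bigcup\{\sigma\cantor : Km(\sigma)\leq|\sigma|-c\}$, each containing $\alpha$, and bound their measure by $2^{-c}$, so that $\bigcap_c U_c$ is a constructively null $G_\delta$ capturing $\alpha$ and kills its randomness.

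The measure estimate in this last step is where I expect the only real difficulty, and it is the single place where monotone complexity refuses to behave like the prefix variant. For $H$ the bound would be instant from the global inequality $\sum_x 2^{-H(x)}\leq 1$, but $\sum_x 2^{-Km(x)}$ \emph{diverges}, so the naive estimate collapses. The fix is to restrict $U_c$ to the $\preceq$-minimal witnesses $\sigma$, which form an antichain $F_c$ with the same union, and to use the key lemma that \emph{the shortest monotone programs of an antichain of strings are themselves prefix-free}: if $\sigma,\tau$ are incomparable and $p_\sigma\preceq p_\tau$ then monotonicity forces $U(p_\tau)$ to extend both $\sigma$ and $\tau$, which is impossible, so the $p_\sigma$ are incomparable and $\sum_{\sigma\in F_c}2^{-Km(\sigma)}\leq 1$ by Kraft. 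With this in hand the cylinders are disjoint and
$$\mu(U_c)=\sum_{\sigma\in F_c}2^{-|\sigma|}\leq 2^{-c}\sum_{\sigma\in F_c}2^{-Km(\sigma)}\leq 2^{-c},$$
completing the forward implication and closing the cycle. Everything else — the two inequalities of the first paragraph and the Kraft--Chaitin coding of the backward step — I expect to be routine once the antichain/monotonicity bookkeeping above is in place.
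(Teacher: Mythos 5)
Your proposal is correct, but there is nothing in the paper to compare it against: Theorem~\ref{thm:HS} is stated in this survey without any proof, with attribution to Schnorr, Chaitin and Levin. Your sketch is essentially the standard Levin--Schnorr argument, and you have isolated exactly the right key point. The cyclic decomposition $R\Rightarrow P_{Km}\Rightarrow P_S\Rightarrow P_H\Rightarrow R$ with the interior implications coming from $Km(x)\leq S(x)+O(1)\leq H(x)+O(1)$ is sound (your process machine $N$ is indeed monotone, since the prefix-freeness of the domain of the optimal prefix machine forces $N$ to be constant on the cone above each halting program), and your antichain lemma --- incomparable strings have incomparable shortest monotone programs, whence $\sum_{\sigma\in F_c}2^{-Km(\sigma)}\leq 1$ by Kraft --- is precisely the correct repair for the divergence of $\sum_x 2^{-Km(x)}$, which is the genuinely non-routine step in the whole theorem. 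Three small points deserve tightening. First, your Kraft weight $\sum_n 2^n 2^{-2n}$ equals $2$ if $n$ ranges over $n\geq 0$; start the levels at $n=1$ or lengthen the requests by one bit. Second, you silently use that the restricted classes (process machines, monotone machines) admit optimal members; this needs a word, and the paper's own invariance-theorem coding $c(e,p)=1^{|e|}0ep$ supplies it, because for fixed $e$ the map $p\mapsto c(e,p)$ preserves the prefix relation while codes with distinct $e$'s are pairwise incomparable, so the universal machine built this way is itself monotone. Third, the ``effective request-based form of Kraft's inequality'' you invoke (the Kraft--Chaitin theorem for computably enumerable request sets) is strictly stronger than the finite Kraft inequality recalled in the paper's Shannon section and should be cited or proved separately; with these routine patches your argument is complete.
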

The main problem with these variants of Kolmogorov complexity
is that {\em there is no solid understanding of what the restrictions
they involve really mean}.
\\
Chaitin has introduced the idea of self-delimitation for prefix-free
functions: since a program in the domain of $U$ has no extension
in the domain of $U$, it somehow know where it ends.
Though interesting, this interpretation is not a definitive
explanation as Chaitin himself admits (personal communication).
\\
Nevertheless, these variants have wonderful properties.
Let's cite one of the most striking one:
taking $\+O=\N$, the series $2^{-H(n)}$ converges and is the biggest
absolutely convergent series up to a multiplicative factor.
%
%
%
%
%
%
%
\section{Application of Kolmogorov complexity to classification}
%
%
\subsection{What is the problem?}\label{ss:what}
%
Striking results have been obtained, using Kolmogorov complexity,
with the problem of classifying quite diverse families of objects:
let them be literary texts, music pieces,
examination scripts (lax supervised) or, at a different level,
natural languages,
natural species (philogeny).
\\
The authors, mainly Bennett, Vitanyi, Cilibrasi,..
have worked out refined methods which are along the following lines.
\begin{enumerate}
\item[(1)]
Define a specific family of objects which we want to classify.
\\
For example a set of Russian literary texts that we want to
group by authors. In this simple case all texts are written
in their original Russian language.
Another instance, music. In that case a common translation is
necessary, i.e. a normalization of the texts of these music pieces
that we want to group by composer. This is required
in order to be able to compare them.
An instance at a different level: the 52 main european languages.
In that case one has to choose a canonical text and its
representations in each one of the different languages
(i.e. corpus) that we consider.
For instance, the Universal Declaration of Human Rights
and its translations in these languages,
an example which was a basic test for Vitanyi's method.
As concerns natural species, the canonical object will be a
DNA sequence.
\\
What has to be done is to select, define and normalize a family
of objects or corpus that we want to classify.
\\
Observe that this is not always an obvious step:
    \begin{itemize}
    \item
    There may be no possible normalization. For instance with
    artists paintings,.
    \item
    The family to be classified may be finite though ill defined
    or even of unknown size, cf. \ref{sss:ngd}.
    \end{itemize}
\item[(2)]
In fine we are with a family of words on some fixed alphabet
representing objects for which we want to compare and measure
pairwise the common information content.
\\
This is done by defining a distance for these pairs
of (binary) words with the following intuition:
\begin{quote}
the more common information there is between two words,
the closer they are and the shorter is their distance.
Conversely, the less common information there is between two words,
the more they are independent and non correlated,
and bigger is their distance.
\\
Two identical words have a null distance.
Two totally independent words
(for example, words representing $100$ coin tossing)
have distance about $1$ (for a normalized distance bounded by $1$).
\end{quote}
Observe that the authors follow Kolmogorov's approach
which was to define a numerical measure of information content of words, i.e. a measure of their randomness.
In exactly the same way, a volume or a surface gets a numerical
measure.
\item[(3)]
Associate a classification to the objects or corpus defined in
(1) using the numerical measures of the distances introduced in (2).
\\
This step is presently the least formally defined.
The authors give representations of the obtained classifications
using tables, trees, graphs,...
This is indeed more a visualization of the obtained classification
than a formal classification.
Here the authors have no powerful formal framework such as,
for example, Codd's relational model of data bases and its extension
to object data bases with trees.
How are we to interpret their tables or trees?
We face a problem, a classical one.
for instance with distances between DNA sequences,
Or with the acyclic graph structure of Unix files in a computer.
\\
This is much as with the rudimentary, not too formal, classification
of words in a dictionary of synonyms.
\\
Nevertheless, Vitanyi \& al. obtained by his methods
a classification tree for the 52 European languages which is that
obtained by linguists, a remarkable success.
And the phylogenetic trees relative to parenthood which are precisely
those obtained via DNA sequence comparisons by biologists.
\item[(4)]
An important problem remains to use a distance to obtain
a classification as in (3). Let's cite Cilibrasi \cite{cilibrasi03}:
\begin{quote}
Large objects (in the sense of long strings) that differ
by a tiny part are intuitively closer than tiny objects that
differ by the same amount.
For example, two whole mitochondrial genomes of $18,000$ bases
that differ by $9,000$ are very different, while two whole nuclear
genomes of $3\times 10^9$ bases that differ by only $9,000$ bases
are very similar.
Thus, absolute difference between two objects does not govern
similarity, but relative difference seems to.
\end{quote}
As we shall see, this problem is easy to fix by some normalization
of distances.
\item[(5)]
Finally, all these methods rely on Kolmogorov complexity which
is a non computable function (cf. \S\ref{ss:Knoncomput}).
The remarkable idea introduced by Vitanyi is as follows:
\begin{itemize}
\item
consider the Kolmogorov complexity of an object
as the ultimate and ideal value of the compression of that object,
\item
and compute approximations of this ideal compression
using usual efficient compressors such as gzip, bzip2, PPM,...
\end{itemize}
Observe that the quality and fastness of such compressors
is largely due to heavy use of statistical tools.
For example, PPM (Prediction by Partial Matching) uses a pleasing
mix of statistical models arranged by trees, suffix trees
or suffix arrays.
The remarkable efficiency of these tools is of course due to
several dozens of years of research in data compression.
And as time goes on, they improve and better approximate
Kolmogorov complexity.
\\
Replacing the ``pure' but non computable Kolmogorov complexity
by a banal compression algorithm such as gzip
is quite a daring step took by Vitanyi!
\\
\end{enumerate}
%
%
\subsection{Classification via compression}
%
\subsubsection{The normalized information distance $NID$}
\label{ss:nid}
We now formalize the notions described above.
The idea is to measure the information content shared
by two binary words representing some objects in a family
we want to classify.
\\
The first such tentative goes back to the 90's \cite{bennett}:
Bennett and al. define a notion of {\em information distance}
between two words $x,y$ as the size of the shortest program
which maps $x$ to $y$ and $y$ to $x$.
These considerations rely on the notion of reversible computation.
A possible formal definition for such a distance is
$$
ID(x,y) = \mbox{least $|p|$ such that $U(p,x)=y$ and $U(p,y)=x$}
$$
where $U:\words\times\words\to\words$ is optimal for $K(\ |\ )$.
\medskip\\ 
An alternative definition is as follows: s
$$
ID'(x,y) = \max\{K(x|y),K(y|x)\}
$$
The intuition for these definitions is that the shortest program
which computes $x$ from $y$ takes into accoulnt all similarities
between $x$ and $y$.
\\
Observe that the two definitions do not coincide
(even up to logarithmic terms) but lead to similar
developments and efficient applications.
\begin{note*}
In the above formula, $K$ can be plain Kolmogorov complexity
or its prefix version.
In fact, this does not matter for a simple reason:
all properties involving this distance will be true up to a
$O(\log(|x|),\log(|y|))$ term and the difference between
$K(z|t)$ and $H(z|t)$ is bounded by $2\log(|z|)$.
For conceptual simplicity, we stick to plain Kolmogorov complexity.
\end{note*}
$ID$ and $ID'$ satisfy the axioms of a distance
{\em up to a logarithmic term}.
The strict axioms for a distance $d$ are
\medskip\\\medskip
$\left\{\begin{array}{rcll}
d(x,x)&=&0&\mbox{(identity)}\\
d(x,y)&=&d(y,x)&\mbox{(symmetry)}\\
d(x,z)&\leq&d(x,y)+d(y,z)&\mbox{(triangle inequality)}
\end{array}\right.$
\medskip\\
The up to a $\log$ term axioms which are satisfied by $ID$
and $ID'$ are as follows:
\medskip\\\medskip
$\left\{\begin{array}{rcl}
ID(x,x)&=&O(1)\\
ID(x,y)&=&ID(y,x)\\
ID(x,z)&\leq&ID(x,y)+ID(y,z) + O(\log(ID(x,y)+ID(y,z)))
\end{array}\right.$
{\small\begin{quote}
\begin{proof}
Let $e$ be such that $U(e,x)=x$ for all $x$.
Then $ID(x,x)\leq|e|=O(1)$. No better upper bound is possible
(except if we assume that the empty word is such an $e$).
\\
Let now $p,p',q,q'$ be shortest programs such that
$U(p,y)=x$, $U(p',x)=y$, $U(q,z)=y$, $U(q',y)=z$.
Thus,
$K(x|y)=|p|$, $K(y|x)=|p'|$, $K(y|z)=|q|$, $K(z|y)=|q'|$.
\\
Consider the injective computable function $\langle\ \rangle$
of Proposition~\ref{p:codeloglog} which is such that
$|\langle r,s \rangle|=|r|+|s|+O(\log |r|)$.
\\
Set $\varphi:\words\times\words\to\words$ be such that
$\varphi(\langle r,s \rangle, x) = U(s,U(r,x))$.
Then
$$
\varphi(\langle q,p \rangle,z) = U(p,U(q,z)) = U(p,y) = x
$$
so that, by the invariance theorem,
\begin{multline*}
K(x|z)\leq K_\varphi(x|z)+O(1)\leq|\langle q,p \rangle|+O(1)
\\
= |q|+|p|+ O(\log(|q|))
= K(y|z)+K(x|y)+O(\log(K(y|z)))
\end{multline*}
And similarly for the other terms.
Which proves the stated approximations of the axioms.
\end{proof}
\end{quote}}
It turns out that such approximations of the axioms
are enough for the development of the theory.
%
\medskip\\
As said in \S\ref{ss:what}, to avoid scale distortion,
this distance $ID$ is normalized to $NID$
(normalized information distance) as follows:
$$
NID(x,y) = \frac{\max(K(x|y),K(y|x))}{\max(K(x),K(y))}
$$
The remaining problem is that this distance is not computable
since $K$ is not.
Here comes Vitanyi's daring idea:
consider this $NID$ as an ideal distance which
is to be approximated by replacing the Kolmogorov function $K$
by computable compression algorithms which go on improving. 
%
%
\subsubsection{The normalized compression distance $NCD$}
%
The approximation of $K(x)$ by $C(x)$ where $C$ is a compressor,
does not suffice. We also have to approximate the conditional
Kolmogorov complexity $K(x|y)$.
Vitanyi chooses the following approximation:
$$
C(y|x) = C(xy) - C(x)
$$
The authors explain as follows their intuition.
\\ To compress the word $xy$ ($x$ concatenated to $y$),
\\- the compressor first compresses $x$,
\\- then it compresses $y$ but skip all information from $y$
which was already in $x$.
\\ Thus, the output is not a compression of $y$ but a compression
of $y$ with all $x$ information removed.
I.e. this output is a {\em conditional compression}ñ of $y$ knowing $x$.
\\
Now, the assumption that the compressor first compresses $x$
is questionable: how does the compressor recovers $x$ in $xy$ ?.
One can argue positively in case $x,y$ are random
(= incompressible) and in case $x=y$.
And between these two extreme cases? But it works...
The miracle of modelization?
Or something not completely understood?
%
%
\medskip\\
With this approximation, plus the assumption that
$C(xy)=C(yx)$ (also questionable) we get the following approximation
of $NID$, called the normalized compression distance $NCD$ :
\begin{eqnarray*}
NCD(x,y) &=& \frac{\max(C(x|y),C(y|x))}{\max(C(x),C(y))}\\
&=& \frac{\max(C(yx)-C(y),C(xy)-C(x))}{\max(C(x),C(y))}\\
&=& \frac{C(xy)-\min(C(x),C(y))}{\max(C(x),C(y))}
\end{eqnarray*}
Clustering according to $NCD$ and, more generally, classification
via compression, is a kind of black box: words are grouped
together according to features that are not explicitly known to us.
Moreover, there is no reasonable hope that the analysis of the
computation done by the compressor gives some light on the obtained
clusters.
For example, what makes a text by Tolsto\"{\i} so characteristic?
What differentiates the styles of Tolsto\"{\i} and Dostoievski?
But it works, Russian texts are grouped by authors by a compressor
which ignores everything about Russian literature.
\\
When dealing with some classification obtained by compression,
one should have some idea of this classification:
this is semantics whereas the compressor is purely syntactic
and does not understand anything.
\\
This is very much like with machines which, given some formal
deduction system, are able to prove quite complex statements.
But these theorems are proved with no explicit semantical idea,
how are we to interpret them? No hope that the machine gives
any hint.
%
%
\subsection{The Google classification}
%
Though it does not use Kolmogorov complexity, we now present
another recent approach by Vitanyi and Cilibrasi \cite{cilibrasi07}
to classification which leads to a very performing tool.
%
\subsubsection{The normalized Google distance $NGD$}\label{sss:ngd}
This quite original method is based on the huge data bank
constituted by the world wide web and the Google search engine
which allows for basic queries using conjunction of keywords.
\\
Observe that the web is not a data base, merely a data bank,
since the data on the web are not structured as data of a data
base.
\medskip\\
Citing  \cite{evangelista06}, the idea of the method is as follows:
\begin{quote}
When the Google search engine is used to search for the word $x$,
Google dsiplays the number of hits that word $x$ has.
The ratio of this number to the total number of webpages indexed
by Google represents the probability that word $x$ appears on a webpage [...]
If word $y$ has a higher conditional probability to appear on a web
page, given that word $x$ also appears on the webpage, than it does
by itself, then it can be concluded that words $x$ and $y$ are
related.
\end{quote}
Let's cite an example from Cilibrasi and Vitany \cite{ciliercim05}
which we complete and update the figures.
The searches for the index term``horse", ``rider" and ``molecule"
respectively return $156$, $62.2$ and $45.6$ million hits.
Searches for pairs of words ``horse rider" and ``horse molecule"
respectively return $2.66$ and $1.52$ million hits.
These figures stress a stronger relation between the words
``horse" and ``rider" than between ``horse" and ``molecule".
\\
Another example with famous paintings:
``Dejeuner sur l'herbe",``Moulin de la Galette" and ``la Joconde".
Let refer them by a, b, c.
Google searches for a, b, c respectively give
$446~000$, $278~000$ and $1~310~000$ hits.
As both the searches for a+b, a+c and b+c, they respectively give
$13~700$, $888$ and $603$ hits. Clearly, the two paintings by Renoir
are more often cited together than each one is with the painting by
da Vinci.
\\
In this way, the method regroups paintings by artists, using what
is said about these paintings on the web.
But this does not associate the painters to groups of paintings.
\medskip\\
Formally, Cilibrasi and Vitany \cite{ciliercim05,cilibrasi07}
define the normalized Google distance as follows:
$$
NGD(x,y) =\frac{\max(\log f(x),\log f(y))-\log f(x,y)}
{\log M -\min(\log f(x),\log f(y))}
$$
where $f(z_1,...)$ is the number of hits for the conjunctive
query $z_1,...$ and $M$ is the total number of webpages that
Google indexes.
%
\subsubsection{Discussing the method}
1. The number of objects in a future classification and that of
canonical representatives of the different corpus is not chosen
in advance nor even boundable in advance and it is constantly
moving.
This dynamical and uncontrolled feature is a totally new
experience.
\medskip\\
2. Domains a priori completely rebel to classification as is
the pictorial domain (no normalization of paintings is possible)
can now be considered.
Because we are no more dealing with the paintings themselves
but with what is said about them on the web.
And, whereas the ``pictorial language" is merely a metaphor,
this is a true ``language" which deals with keywords and their
relations in the texts written by web users.
\medskip\\
3. However, there is a big limitation to the method, that of
a closed world: {\em the World according to Google},
{\em Information according to Google}...
\\ If Google finds something, one can check its pertinence.
Else, what does it mean? Sole certainty, that of uncertainty.
\\
When failing to get hits with several keywords, we give up the
original query and modify it up to the point Google gives some
pertinent answers.
\\ So that failure is as negation in Prolog which is much weaker
than classical negation.
It's reasonable to give up a query and accordingly consider
the related conjunction as meaningless. However, one should keep
in mind that this is relative to the close
- and relatively small - world of data on the web,
the sole world accessible to Google.
\\
When succeeding with a query, the risk is to stop on this
succeeding query and
\\- forget that previous queries have been tried which failed,
\\- omit going on with some other queries which could possibly
lead to more pertinent answers.
\medskip\\
There is a need to formalize information on the web and the
relations ruling the data it contains. And also the notion
of pertinence. A mathematical framework is badly needed.
\\
This remarkable innovative approach is still in its infancy.
%
%
\subsection{Some final remarks}
%
These approaches to classification via compression
and Google search of the web are really provocative.
They allow for classification of diverse corpus along a
top-down operational mode as opposed to bottom-up grouping.
\\
Top-down since there is no prerequisite of any a priori knowledge
of the content of the texts under consideration.
One gets information on the texts without entering their
semantics, simply by compressing them or counting hits with Google.
This has much resemblance with statistical methods which point
correlations to group objects. Indeed, compressors and Google
use a large amount of statistical expertise.
\\
On the opposite, a botton-up approach uses keywords which have to
be previously known so that we already have in mind what the
groups of the classification should be.
\medskip\\
Let's illustrate this top-down versus bottom-up opposition
by contrasting three approaches related to the classical
comprehension schema.
\medskip\\
{\em Mathematical approach.}\\
This is a global, intrinsically deterministic approach
along a fundamental dichotomy: true/false,
provable/inconsistent.
A quest for absoluteness based on certainty.
This is reflected in the classical comprehension schema
$$
\forall y\ \exists Z\ \ Z=\{x\in y \mid \+P(x)\}
$$
where $\+P$ is a property fixed in advance.
\medskip\\
{\em Probabilistic approach.}\\
In this pragmatic approach uncertainty is taken into consideration,
it is bounded and treated mathematically.
This can be related to a probabilistic version
of the comprehension schema where the truth of $\+P(x)$ is
replaced by some limitation of the uncertainty: the probability
that $x$ satisfies $\+P$ is true is in a given interval.
Which asks for a two arguments property $\+P$ :
$$
\forall y\ \exists Z\ \ Z=\{x\in y \mid
\mu(\{\omega\in\Omega\mid\+P(x,\omega)\})\in I\}
$$
where $\mu$ is a probability on some space $\Omega$
and $I$ is some interval of $[0,1]$.
\medskip\\
The above mathematical and probabilistic approaches are
bottom-up. One starts with a given $\+P$ to group objects.
\medskip\\
{\em Google approach.}\\
Now, there is no idea of the interval of uncertainty.
Google may give 0\% up to 100\% of pertinent answers.
It seems to be much harder to put in a mathematical framework.
But this is quite an exciting approach, one of the few
top-down ones together with the compression approach and
those based on statistical inference.
This Google approach reveals properties, regularity laws.
%
%
%


\begin{thebibliography}{9}  

\bibitem{bennett}
C. Bennett and P. G\`acs, M. Li and W. Zurek.
\newblock Information distance.
\newblock {\em IEEE Trans. on Information Theory},
44(4):1407--1423, 1998 .

\bibitem{bms07}
L. Bienvenu, W. Merkle and A. Shen.
\newblock A simple proof of Miller-Yu theorem.
\newblock To appear.

\bibitem{BKM06}
G. Bonfante and M. Kaczmarek and J-Y. Marion.
\newblock On abstract computer virology: from a recursion-theoretic perspective.
\newblock {\em Journal of computer virology}, 3-4, 2006.

\bibitem{chaitin66}
G.~Chaitin.
\newblock On the length of programs for computing finite binary
sequences.
\newblock {\em Journal of the ACM}, 13:547--569, 1966.

\bibitem{chaitin69}
G.~Chaitin.
\newblock On the length of programs for computing finite binary
sequences: statistical considerations.
\newblock {\em Journal of the ACM}, 16:145--159, 1969.

\bibitem{chaitin71}
G.~Chaitin.
\newblock Computational complexity and g{\"o}del incompleteness theorem.
\newblock {\em ACM SIGACT News}, 9:11--12, 1971.

\bibitem{chaitin74}
G.~Chaitin.
\newblock Information theoretic limitations of formal systems.
\newblock {\em Journal of the ACM}, 21:403--424, 1974.

\bibitem{chaitin75}
G.~Chaitin.
\newblock A theory of program size formally identical
to information theory.
\newblock {\em Journal of the ACM}, 22:329--340, 1975.

\bibitem{cilibrasi03}
R.~Cilibrasi.
\newblock Clustering by compression.
\newblock {\em IEEE Trans. on Information Theory},
51(4):1523-1545, 2003.

%
\bibitem{ciliercim05}
R.~Cilibrasi and P. Vitanyi.
\newblock Google teaches computers the meaning of words.
\newblock {\em ERCIM News}, 61, April 2005.

\bibitem{cilibrasi07}
R.~Cilibrasi and P. Vitanyi.
\newblock The Google similarity distance.
\newblock {\em IEEE Trans. on Knowledge and Data Engineering},
19(3):370-383, 2007.


\bibitem{delahaye1999}
J.P. Delahaye.
\newblock {\em Information, complexit{\'e}, hasard}.
\newblock Herm{\`e}s, 1999 (2d edition).

\bibitem{delahaye2004}
J.P. Delahaye.
\newblock Classer musiques, langues, images, textes
              et g\'enomes.
\newblock {\em Pour La Science}, 316:98--103, 2004.

\bibitem{delahaye2006}
J.P. Delahaye.
\newblock Complexit\'es :
aux limites des math\'ematiques et de l'informatique.
\newblock {Pour La Science}, 2006.

\bibitem{evangelista06}
A. Evangelista and B. Kjos-Hanssen.
\newblock Google distance between words.
\newblock {\em Frontiers in Undergraduate Research},
Univ. of Connecticut, 2006.

\bibitem{feller}
W.~Feller.
\newblock {\em Introduction to probability theory and its applications},
volume~1.
\newblock John Wiley, 1968 (3d edition).

\bibitem{gacs93}
P.~G{\'a}cs.
\newblock Lectures notes on descriptional complexity and randomness.
\newblock {\em Boston University}, pages 1--67, 1993.
\newblock http://cs-pub.bu.edu/faculty/gacs/Home.html.

\bibitem{huffman}
D.A.~Huffman.
\newblock A method for construction of minimum-redundancy codes.
\newblock {\em Proceedings IRE},
40:1098--1101, 1952.

%
\bibitem{knuth2}
D.~Knuth.
\newblock {\em The {A}rt of {C}omputer {P}rogramming.
{V}olume 2:  semi-numerical algorithms}.
\newblock Addison-Wesley, 1981 (2d edition).

%
\bibitem{kolmo33}
A.N. Kolmogorov.
\newblock {\em Grundbegriffe der Wahscheinlichkeitsrechnung}.
\newblock Springer-Verlag, 1933.
\newblock English translation {`}Foundations of the Theory of Probability{'},
  Chelsea, 1956.

\bibitem{kolmo63}
A.N. Kolmogorov.
\newblock On tables of random numbers.
\newblock {\em Sankhya, The {I}ndian {Journal} of {S}tatistics,
ser. A},
25:369--376, 1963.

\bibitem{kolmo65}
A.N. Kolmogorov.
\newblock Three approaches to the quantitative definition of information.
\newblock {\em Problems Inform. Transmission}, 1(1):1--7, 1965.

%
\bibitem{kolmo83}
A.N. Kolmogorov.
\newblock Combinatorial foundation of information theory and the calculus of
  probability.
\newblock {\em Russian Math. Surveys}, 38(4):29--40, 1983.

\bibitem{li03}
M. Li, X. Chen, X. Li, B. Ma and P. Vit\'{a}nyi.
\newblock The similarity metrics.
\newblock {\em 14th ACM-SIAM Symposium on Discrete algorithms},
2003.

\bibitem{livitanyi}
M.~Li and P.~Vit\'{a}nyi.
\newblock {\em An introduction to Kolmogorov Complexity
and its applications}.
Springer, 2d Edition, 1997.

\bibitem{martinlof66}
P.~Martin-L{\"o}f.
\newblock The definition of random sequences.
\newblock {\em Information and Control}, 9:602--619, 1966.

\bibitem{martinlof71}
P.~Martin-L{\"o}f.
\newblock Complexity of oscilations in infinite binary sequences.
\newblock {\em Z. Wahrscheinlichkeitstheorie verw. Geb.}, 19:225--230, 1971.

\bibitem{milleryu}
J. Miller and L. Yu.
\newblock On initial segment complexity and degrees of randomness.
\newblock {\em Trans. Amer. Math. Soc. }, to appear.

\bibitem{neumannsin}
J. von Neumann.
\newblock Various techniques used in connection with random
digits.
\newblock {\em Monte Carlo Method},
A.S. Householder, G.E. Forsythe, and H.H. Germond, eds.,
National Bureau of Standards Applied Mathematics Series
(Washington, D.C.: U.S. Government Printing Office), 12:36--38,
1951.

\bibitem{nies}
A. Nies \& F. Stephan \& S.A. Terwijn.
\newblock Randomness, relativization and Turing degrees.
\newblock To appear.

\bibitem{russell}
B.~Russell.
\newblock Mathematical logic as based on the theory of types.
\newblock {\em Amer. J. Math.}, 30:222--262, 1908.
\newblock Reprinted in {`}From Frege to G{\"o}del A source book in mathematical
  logic, 1879-1931{'}, J. van Heijenoort ed., p. 150-182, 1967.

\bibitem{schnorr71}
P.~Schnorr.
\newblock A unified approach to the definition of random sequences.
\newblock {\em Math. Systems Theory}, 5:246--258, 1971.

\bibitem{schnorr73}
P.~Schnorr.
\newblock A Process complexity and effective random tests.
\newblock {\em J.of Computer and System Sc.}, 7:376--388, 1973.

\bibitem{shannon48}
C.E..~Shannon.
\newblock The mathematical theory of communication.
\newblock {\em Bell System Tech. J.}, 27:379--423, 1948.

\bibitem{soareBSL}
R.~Soare.
\newblock Computability and Recursion.
\newblock {\em Bulletin of Symbolic Logic}, 2:284--321, 1996.

\bibitem{solo64a}
R.~Solomonoff.
\newblock A formal theory of inductive inference, part I.
\newblock {\em Information and control}, 7:1--22, 1964.

\bibitem{solo64b}
R.~Solomonoff.
\newblock A formal theory of inductive inference, part II.
\newblock {\em Information and control}, 7:224--254, 1964.


\bibitem{mises19}
R.~von Mises.
\newblock Grundlagen der wahrscheinlichkeitsrechnung.
\newblock {\em Mathemat. Zeitsch.}, 5:52--99, 1919.

\bibitem{mises39}
R.~von Mises.
\newblock {\em Probability, Statistics and Truth}.
\newblock Macmillan, 1939.
\newblock Reprinted: Dover, 1981.

\bibitem{zvonkin-levin}
A.~Zvonkin and L.~Levin.
\newblock The complexity of finite objects
and the development of the concepts of information
and randomness by means of the theory of algorithms.
\newblock {\em Russian Math. Surveys}, 6:83--124, 1970.


\end{thebibliography}
\end{document}